\theoremstyle{plain}
\newtheorem{theorem}{Theorem}[section]
\newtheorem{lemma}{Lemma}[section]
\newtheorem{corollary}{Corollary}[section]
\newtheorem{proposition}{Proposition}[section]
\theoremstyle{definition}
\newtheorem{definition}{Definition}[section]
\newtheorem{example}{Example}[section]
\theoremstyle{remark}
\newtheorem{remark}{Remark}[section]
\renewcommand\section{\@startsection{section}{1}{\z@}%
                                   {-5.5ex \@plus -1ex \@minus -.2ex}%
                                   {1.3ex \@plus.2ex}%
                                   {\normalfont\normalsize\bfseries}}
\renewcommand{\@seccntformat}[1]{\csname the#1\endcsname\ }
\renewcommand\subsection{\@startsection{subsection}{2}{\z@}%
                                   {-2.5ex \@plus -1ex \@minus -.2ex}%
                                   {1.3ex \@plus.2ex}%
                                   {\normalfont\normalsize}}
\def\RR{\mathbb{R}}
\def\P{\mathscr{P}}
\def\L{\mathcal{L}}
\def\M{\mathcal{M}}
\def\A{\mathcal{A}}
\def\B{\mathcal{B}}
\def\P{\mathcal{P}}
\def\V{\mathcal{V}}
\def\R{\mathcal{G}}
\def\la{\langle}
\def\ra{\rangle}
\def\raa{\rightarrow}
\DeclareMathOperator{\inte}{int}
\DeclareMathOperator{\ri}{ri}
\DeclareMathOperator{\core}{core}
\DeclareMathOperator{\icr}{icr}
\DeclareMathOperator{\sqri}{sqri}
\DeclareMathOperator{\lin}{lin}
\DeclareMathOperator{\wsup}{WSup}
\DeclareMathOperator{\winf}{WInf}
\DeclareMathOperator{\wmax}{WMax}
\DeclareMathOperator{\wmin}{WMin}
\DeclareMathOperator{\cl}{cl}
\DeclareMathOperator{\bd}{bd}
\DeclareMathOperator{\co}{co}
\DeclareMathOperator{\cone}{cone}
\DeclareMathOperator{\aff}{aff}
\DeclareMathOperator{\dom}{dom}
\DeclareMathOperator{\epi}{epi}
\DeclareMathOperator{\gr}{gr}
\DeclareMathOperator{\exepi}{\mathfrak{E} {\rm{pi}}}
\def\nplus{\boxplus}
\begin{document}

\vskip-1cm
\title{A Perturbation  Approach to Vector Optimization Problems: Lagrange and Fenchel-Lagrange Duality}


\author{
		N. Dinh\thanks{CONTACT N. Dinh. Email:  ndinh@hcmiu.edu.vn.} \thanks{Department of Mathematics, International University Vietnam National University-Ho Chi Minh City, Ho Chi Minh City, Vietnam,  e-mail: ndinh@hcmiu.edu.vn; Vietnam National University, Ho Chi Minh City,  Vietnam.} 
		\and D.H. Long\thanks{VNUHCM - University of Science, District 5, Ho Chi Minh city, Vietnam; Tien Giang University, Tien Giang town, Vietnam; e-mail: danghailong@tgu.edu.vn.}}

\date{}

\maketitle


\begin{abstract}  In this paper we study the  general minimization vector  problem (P), 
concerning a   perturbation  mapping, defined in   locally  convex  Hausdorff  topological vector spaces where the  ``WInf" stands for the weak infimum with respect to   
an ordering  generated by a convex cone  $K$.  Several representations of the epigraph of the conjugate mapping of the perturbation mapping   are established. From these,  
variants vector Farkas lemmas are then proved. 
Armed with these basic tools,  the {\it dual} and the so-called {\it loose   dual problem}  of  (P) are defined, and then 
stable strong duality results between these pairs of primal-dual problems are established.  The results just obtained are then applied to a general class (CCCV) of composed vector optimization problems with 
cone-constrained.  For this classes of problems, four perturbation mappings   are suggested. Each of these mappings   yields  several forms of vector Farkas lemmas and two forms of dual problems for (CCVP). Concretely, one of the suggested perturbation mapping give rises to well-known  {\it Lagrange} and {\it loose Lagrange dual problems} for (CCVP) while each of the three others,  yields two kinds of Fenchel-Lagrange dual problems for (CCVP). Stable strong duality for these pairs of primal-dual problems are proved. Several special cases of (CCVP) are also considered at the end of the paper, including:  vector composite problems (without constraints), cone-constrained vector problems, and scalar composed problems. The results obtained in this papers when specified to the two concrete mentioned vector  problems  go some  Lagrange duality results appeared recently,  and also lead to new results on stable strong Fenchel-Lagrange duality results, which, to the best knowledge of the authors, appear for the first time in the literature. For the scalar composite problems, our  results go back or extend  many Lagrange and Fenchel-Lagrange duality results in the literature.  
\medskip

\noindent
{\bf Keywords}\quad
{Vector optimization problems $\cdot$ perturbation mappings $\cdot$ perturbation approach $\cdot$ vector Farkas lemmas  $\cdot$ stable strong duality  for vector problems}

\noindent
{\bf Mathematics Subject Classification (2010)}\quad
49N15 $\cdot$  90C25 $\cdot$ 90C29 $\cdot$ 90C31 $\cdot$ 90C46 $\cdot$ 90C48

\end{abstract}




\section{Introduction}
\label{sect1} 
 We are  concerned with the general vector optimization problem 
\begin{align*}
({\rm P}^L)&\qquad \mathop{\winf}\limits_{x\in X}[\Phi(x,0_Z)-L(x)],
\end{align*}
associated with  a  perturbation  mapping $\Phi\colon X\times Z\to Y\cup\{+\infty_Y\}$, and a linear operator  $ L \in \V\subset  \L(X, Y)$,  where $X,Y,Z$ are  locally  convex  Hausdorff  topological vector spaces (in brief, lcHtvs) and ``WInf" stands for the weak infimum w.r.t.    an ordering  generated by a convex cone  $K\subset Y$.  {The problem  $({\rm P}^L) $ or $(P)$ (when $L =0_\L\in \L(X, Y)$) is very general vector optimization problems which  includes models considered in e.g., the book \cite{BGW09}, and also classes of practical problems (see, e.g.,  \cite{CDLP20,Khan-Tammer-Zali}, and references therein).}
  
Conjugate duality has {been}  used to study duality for scalar, vector problems, and also for set-valued optimization problems by many authors, see, for instance, \cite{Bot2010,DNV-08,DVV-14,DMVV-Siopt,ET76,Z02,BGWMIA09,JSDL05,Burachik0Jeya-Wu}  for scalar and robust (scalar)  problems, and \cite{BGW09,BGW097nw,BGOPT11,DGLL17,DGLMJOTA16,DL2017,Tanino92,Tammer07,Tanino79}  for vector and set-valued problems. 
 The perturbation approach  was also  recently developed  for  {vector-valued}  optimization problems  in \cite{BGW09} {to study the duality for these  problems. }
   
In this  paper  we use  the perturbation approach to  vector optimization problems to establish {stable strong  duality results} for  $({\rm P}^L)$.   We first  prove some  representations of the epigraph of   conjugate mapping of $\Phi (., 0_Z)$,  $\epi \Phi(.,0_Z)^\ast$.  and then prove transcriptions (necessary and sufficient conditions) of  general ``vector inequality" of the form: 
  \begin{equation}\label{VI}
\Phi(x,0_Z)    \notin -\inte K,\; \forall x\in X . 
\end{equation}
These transcriptions can be considered as  {generalizations of Farkas lemmas for the vector system   \eqref{VI}.}  Lastly,  for $\emptyset \ne \V \subset  \L(X, Y)$, and each $L\in \V$, we introduce 
 the \emph{ dual problem} and the \emph{loose  dual problem} of $({\rm VP}^L)$, respectively, 
 \begin{align*}
 ({\rm D}^L)& \qquad \mathop{\wsup}\limits_{{T\in \L_\Phi}} [-\Phi^* (L,T)], \\
({\rm D}_\ell^L)&\qquad  \mathop{\wsup}\limits_{{T\in \L_\Phi^+}}[-\Phi^* (L,T)], 
\end{align*}
where $\L_{\Phi}:=    \{T\in \L(Z,Y): \exists   L\in \L(X,Y)\   \textrm{s.t.} \         (L,T)\in \dom \Phi^\ast \}$  and  $\L_\Phi^+:= \L_\Phi \cap  \L_+(S, K)$,  where    $\L_+(S, K) = \{ T \in \L(X, Z) :  T(S) \subset K \}$, and $S$ is a convex cone in $Z$.  Several results on the $\V$-stability of  strong duality for the pairs of primal-dual problems, i.e.,  ${\rm (P}^L) - {\rm (D}^L)$ and   ${\rm (P)} - {\rm (D}_{\ell}^L)$ for  any $L \in \V$ are established.

 In order to overcome difficulties arising from   a general approach,  we  use some new notions  such as   "partition-style subsets of $Y$",  {the ordering     "$\preccurlyeq_K $" between such two  sets,}     "extended epigraphs" of conjugate mappings,  and  new operations,  $\uplus$-sum,   between two subsets of $Y$,  and the operation $\nplus$-sum on the collection of  {all} extended epigraphs that just introduced recently by the same authors in    \cite{DL20}.    
 We  {prove}  also a new version of open mapping theorem concerning a map defined  on a product space (all  these   are presented in Section 2).   
 
  The novelty of our work  is three-fold: Firstly, these new tools  permit us to establish variants of representations of the  epigraph of the conjugate mapping, $\epi \Phi^\ast$,  of the perturbation mapping of $\Phi $, which pay the way for proving versions of general vector Farkas lemmas and  also stable strong duality for the pairs (P) - $ ({\rm D}^L) $ and (P) - $ ({\rm D}^L_\ell) $, for which, in concrete  problems give rise to stable strong duality concerning  Fenchel-Lagrange dual problems that probably appeared for the first time for vector optimization problems (see Sections 5 and 6).  In the case where $Y = \RR$ these dual problems go back to  the traditional Fenchel-Lagrange dual problems as in \cite{Bot2010,DNV-08,BGWMIA09,DVN-08} (see Section 6) and this justifies the name  ``Lagrange and Fenchel-Lagrange dual problems" of  (D) and $({\rm D}_\ell)$.   
 Secondly, {our  approach is} based on a  very general perturbational $K$-convex mapping $\Phi$ (not necessarily  continuous) {which} enables a wide-range of applications (i.e., one gets different results with different choices of $\Phi$) not only on vector optimization problems but also on alternative theorems for vector inequalities.
Beside the new results that come from different mappings $\Phi$, the approach   generalizes and unifies the antecedent  works on duality vector optimization problems and vector Farkas lemmas   such as   \cite{CDLP20,DGLL17,DGLMJOTA16,DGLM-Optim17}.

To illustrate the meaning of the results, we consider at the end of the paper   a class of  
the composed vector   problem  with a cone constraint:
   \begin{align*}
({\rm CCVP})&\qquad \winf \{F(x)+(\kappa \circ H)(x): \quad x\in C,\; G(x)\in-S\}.  
\end{align*}   
a special case  of this problem is the cone-constrained vector problem 
\begin{align*}
({\rm VP})&\qquad \winf \{F(x):x\in C,\; G(x)\in -S\}
\end{align*}
which is the common model many practical problems  in science and engineering, for instance,  the problem of designing of fiber distributed data interface computer networks, the problem of designing of a cross-current multistage extraction process    (see e.g., \cite{CDLP20}, \cite[Chapter 13]{Jahn-vector}).  
 Four different   perturbation mappings  for (CCVP): $\Phi_1, \Phi_2, \Phi_3$, and $\Phi_4$  are proposed, and  generalized vector Farkas lemmas for the systems associated to (CCVP),  {\it Lagrange and Fenchel-Lagrange  dual problems} (eight in all, two problems for each $\Phi_i$) then follow together with 
 stable strong   duality results corresponding to these dual problems. 
 In Section 6, some more specific problems in the class $({\rm CCVP}) $ are considered, including the (VP) and the scalar composite problems. Here,  with the specifications of  $\Phi_1, \Phi_2, \Phi_3$, and $\Phi_4$ to each of concrete problems, results in Sections 5 give rise to the ones which cover/extend the corresponding known ones in the literature, and also propose several new results.  An illustrative  example showing one of the  new and specific feature of the our results, which never appeared before,   is as follows:  with (VP), $\Phi_3$ (the same  observation for $\Phi_2, \Phi_4$)  
 leads to the representation: 
\begin{align}\label{A1}
 \epi (F+I_A)^\ast
&=    \Psi\Big(\exepi F^*\boxplus\exepi I_C^*\boxplus\!\!\!\!\! \bigcup_{T\in\L_+(S,K)}\!\!\!\!\!\exepi  ( T\circ G)^*\Big), 
\end{align}
where $A := C \cap G^{-1}(-S)$,  $I_C: X \rightarrow Y^\bullet$  is the indicator mapping of $C$, and $\exepi F^*$ is the extended epigraph of {$F^\ast$.}  When $Y = \RR$, the representation collapses to 
\begin{equation} \label{A2}
\epi (f+i_A)^*=\epi f^*+\epi i_C^*+\!\!\bigcup\limits_{z^*\in S^+}\!\!\epi(z^*\circ G)^*,
\end{equation} 
which  appeared  in many works (see,  e.g.,   \cite{Bot2010,DNV-08,DVV-14,BGWMIA09,JSDL05,DVN-08} and references therein). This shows that the representation of of epigraph of conjugate functions  as in \eqref{A2} is extend, for the first time,  to the one for epigraph of conjugate mappings. 
   
The paper is organized as follows: In Section 2 we firstly  present some basic notations and known results which are necessary in the sequent.  Secondly, we 
recall some  notions to be used along this paper, which  include  notions of $(Y. K)$-partition style sets, extended epigraphs of conjugate mappings,  operations   (as, $\uplus$-sum, $\nplus$-sum),  and  orderings together  with their basic properties which were recently introduced by the same authors in \cite{DL20}. Besides,  a new version of an  extended open mapping theorem is also proved. .
  In Section 3 several representations of the epigraph of the conjugate mapping  $ \Phi(., 0_Z)^\ast $   are established, and based on these, several necessary and sufficient conditions of the ``vector inequality" of the form \eqref{VI} are proved. 
  These  are actually general vector Farkas lemmas. In fact, it is shown  that in many concrete situations (see Sections 5, 6)  they generalize,   cover    the  known ones   in the literature,  or give rise to new ones versions of (vector/scalar)  Farkas lemmas, even when  $Y = \RR$.     Section 4 presents  the main results of the paper:  
   necessary and sufficient conditions for $\V$-stable strong duality of for (VP).  More precisely,  we show that the  strong duality holds for the pairs $({\rm P}^L) - {\rm (D}^L)$ and   $({\rm P}^L) - {\rm (D}_{\ell}^L)$ for each $L \in \V$. Sections 5 and 6   are left as   illustration examples for  the meaning of   the results obtained in Sections 3, 4. Concretely, in Section 5, we consider the composed vector   problem (CCVP) with a cone constraint   and some special cases of this are considered in Section 6. We show in these two last sections that the results obtained in this paper when specified to these concrete problems give rise to several consequences due to different choices of $\Phi$. Some among them  cover known ones in the literature and  some are  new even when coming back to the case where $Y = \RR$. Some similar or  rather technical proofs of some auxiliary results are left to the  Appendix at the end of the paper. 
  
 \section{Preliminaries, First Results,  and  Generalized Open Mapping Theorem}
\label{section2}
\subsection{Prelimainaries}    

{Let} $X,Y,Z$ be lcHtvs with their topological dual spaces denoted by $X^{\ast },Y^{\ast }$ and $Z^{\ast }$, respectively, and all are equipped with  the weak*-topology.
For a set $U\subset X$, we denote by $\inte U,\ \cl U,\ \bd U$, $\ \co U$, $\ \lin U$, $\ \aff U,\ \cone U$ the \emph{interior},  the \emph{closure}, the \emph{boundary}, the \emph{convex hull}, the \emph{linear hull}, the \emph{affine hull}, and the \emph{cone hull} of $U$, respectively. The  \emph{intrinsic core} of $U$,  
 \emph{relative interior} of $U$ are defined respectively as \cite{Bot2010,Z02} 
\begin{eqnarray*} 
\icr U&:=&\{x\in X:\forall x'\in \aff U,\; \exists \delta>0  \textrm{ such \ that \ }  x+\lambda x'\in U,\; \forall \lambda \in [0,\delta]\},\\
\ri U&:=& \{x\in \aff U: \exists V,  \textrm{ the neighborhood of } x  \textrm{ such \ that \ }  (V\cap \aff U)\subset U\}. 
\end{eqnarray*} 
 It is well-known that $\core U \subset \sqri U \subset \icr U \subset U$ and when $X$ is a finite dimension space, one has $\icr U =\sqri U = \ri U$.

Assume that $X_0$ is a topological subspace of $X$. For $A\subset X_0$, denote by $\inte_{X_0} A$ the interior of $A$ w.r.t. the topology induced in  $X_0$.  
Given two subsets $A$ and $B$ of a topological space,
one says that $A$ is \emph{closed regarding} $B$ if $B\cap \cl A=B\cap A$ (see, e.g., \cite[p. 56]{Bot2010}).


$\bullet$ {\it Weak Ordering  Generated by  a Convex Cone. }   
Let $K$ be a proper,  closed, and convex cone in $Y$ with nonempty interior, i.e.,  $\inte K\neq \emptyset $.  
It is worth observing that 
$K+\inte K=\inte K,  
$
or equivalently, 
\begin{equation}
\left. 
\begin{tabular}{c}
$y\in K$ \\ 
$y+y^{\prime}\notin\inte K$%
\end{tabular}
\right\} \ \Longrightarrow\ y^{\prime}\notin\inte K.  \label{int-plus}
\end{equation}

We define  a \emph{weak ordering} in $Y$ generated by $K$ as follows: for all $y_1, y_2 \in Y$, 
\begin{equation}
y_{1}<_{K}y_{2}\; \Longleftrightarrow \; y_{1}-y_{2}\in -\inte K. 
\label{eq_weak_ordering_strict}
\end{equation}
In $Y$ we sometimes   also consider an usual   \emph{ ordering} generated by the cone $K$, $\leqq_K$, which is defined by $y_1\leqq_Ky_2$ if and only if $y_1-y_2\in -K, $ for $y_1, y_2 \in Y$.

\begin{lemma} {\rm \cite[Lemma 2.1]{CDLP20} }   \label{pro_1a} The following assertions hold: 
\begin{itemize}
\item[$\rm(i)$] For all $y,y'\in Y$ and $k_0\in\inte K$, there is $\mu>0$ 
such that  $y -\mu k_0<_K  y'$,  
\item[$\rm(ii)$] For all finite collection $\{y_i\}_{i=1}^n\subset Y$, there exists $y_0, y'_0\in Y$ such that 
$$y'_0<_K y_i<_K y_0,\quad \forall i\in\{1,\ldots, n\}.$$ 
\end{itemize}
\end{lemma}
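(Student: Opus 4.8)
The plan is to prove (i) first, by an absorbing-neighborhood argument in the lcHtvs $Y$, and then to derive (ii) from (i) together with the identity $K+\inte K=\inte K$ recorded in \eqref{int-plus}.

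For (i), I would rewrite the target inequality $y-\mu k_0<_K y'$ as the membership $\mu k_0+(y'-y)\in\inte K$. Since $K$ is a convex cone with $\inte K\neq\emptyset$, its interior is invariant under multiplication by positive scalars (as $y\mapsto\lambda y$ is a homeomorphism of $Y$ and $\lambda K=K$ for $\lambda>0$), so this membership is equivalent to $k_0+\tfrac1\mu(y'-y)\in\inte K$. Now $\inte K$ is open and contains $k_0$, so there is a neighborhood $W$ of $0_Y$ with $k_0+W\subset\inte K$; since $W$ is absorbing, there is $s>0$ with $\alpha(y'-y)\in W$ for all $\alpha\in(0,s]$. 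Choosing any $\mu\ge 1/s$ then gives $\tfrac1\mu(y'-y)\in W$, hence $k_0+\tfrac1\mu(y'-y)\in\inte K$, and multiplying back by $\mu>0$ yields $\mu k_0+(y'-y)\in\inte K$, i.e.\ $y-\mu k_0<_K y'$.

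For (ii), I would fix $k_0\in\inte K$ and apply (i) twice for each index: to the pair $(y_i,0_Y)$, obtaining $\mu_i>0$ with $\mu_i k_0-y_i\in\inte K$, and to the pair $(0_Y,y_i)$, obtaining $\nu_i>0$ with $y_i+\nu_i k_0\in\inte K$. Setting $\rho:=\max_{1\le i\le n}\max\{\mu_i,\nu_i\}$, $y_0:=\rho k_0$ and $y'_0:=-\rho k_0$, I would then write, for every $i$,
\begin{align*}
y_0-y_i&=(\rho-\mu_i)k_0+(\mu_i k_0-y_i)\in K+\inte K=\inte K,\\
y_i-y'_0&=(\rho-\nu_i)k_0+(y_i+\nu_i k_0)\in K+\inte K=\inte K,
\end{align*}
where $(\rho-\mu_i)k_0,(\rho-\nu_i)k_0\in K$ and the last equalities use \eqref{int-plus}. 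This gives $y_i-y_0\in-\inte K$ and $y'_0-y_i\in-\inte K$, i.e.\ $y'_0<_K y_i<_K y_0$ for all $i$.

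The only genuinely delicate point is the scaling step in (i): it is essential to use that the interior of a convex cone is invariant under positive dilations (so the defining membership is homogeneous in $\mu$) and that $0_Y$ admits absorbing neighborhoods in a topological vector space. Once these two facts are in place, part (ii) is pure bookkeeping built on top of $K+\inte K=\inte K$, and there is no further obstacle.
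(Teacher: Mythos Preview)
Your proof is correct. The paper does not supply its own proof of this lemma: it is stated with a citation to \cite[Lemma~2.1]{CDLP20} and used as a black box, so there is nothing in the paper to compare your argument against.
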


We enlarge $Y$ by attaching a \emph{greatest element} $+\infty _{Y}$ and a 
\emph{smallest element} $-\infty _{Y}$ with respect to $<_{K}$, which do not
belong to $Y$, and we denote $Y^{\bullet }:=Y\cup \{-\infty _{Y},+\infty
_{Y}\}$.We also denote that $Y^\infty:=Y\cup \{+\infty
_{Y}\}$. By convention,  $-\infty _{Y}<_{K}y<_{K}+\infty _{Y}$ for any $y\in Y
$ and, for  $M\subset Y$,    
\begin{gather}
-\!(\!+\!\infty _{Y}\!)=-\!\infty _{Y}, \quad-\!(\!-\!\infty _{Y})=+\!\infty _{Y}, 
\quad
(\!+\!\infty _{Y}\!)\!+\!y=y\!+\!(\!+\!\infty _{Y}\!)=+\!\infty _{Y},\; \forall y\!\in\! Y^\infty
 \notag \\
(\!-\!\infty _{Y}\!)\!+\!y=y\!+\!(\!-\!\infty _{Y}\!)=-\!\infty _{Y},\; \forall y\!\in\! -Y^\infty ,    \label{2.4}\\ 
M\!+\!\{\!-\!\infty _{Y}\!\}=\{\!-\!\infty _{Y}\!\}\!+\!M=\{\!-\!\infty _{Y}\!\},\ \ 
 M\!+\!\{\!+\!\infty_{Y}\!\}=\{\!+\!\infty _{Y}\!\}\!+\!M=\{\!+\!\infty _{Y}\!\}. \notag
 \end{gather}%
The sums $(-\infty _{Y})+(+\infty _{Y})$ and $(+\infty _{Y})+(-\infty _{Y})$
are not considered in this paper.


The following notions are the key ones and  will be used throughout  the paper.

\begin{definition}{\rm (\cite[Definition 7.4.1]{BGW09}, \cite{Tanino92})} \label{def1} Let $M\subset Y^{\bullet }$. 

$\left(\rm a\right) $ An element $\bar{v}\in Y^{\bullet }$
is said to be a \emph{weakly infimal element} of $M$ if for all $v\in M$ we
have $v\not<_{K}\bar{v}$ and if for any $\tilde{v}\in Y^{\bullet }$ such
that $\bar{v}<_{K}\tilde{v}$, then there is some $v\in M$ satisfying $%
v<_{K}\tilde{v}$. The set of all weakly infimal elements of $M$ is denoted
by $\winf M$ and is called the \emph{weak infimum} of $M$.\medskip

$\left(\rm b\right) $ An element $\bar{v}\in Y^{\bullet }$ is said
to be a \emph{weakly supremal element} of $M$ if for all $v\in M$ we have $%
\bar{v}\not<_{K}v$ and if for any $\tilde{v}\in Y^{\bullet }$ with  $%
\tilde{v}<_{K}\bar{v}$, then there exists some $v\in M$ satisfying $\tilde{v}%
<_{K}v$. The set of all supremal elements of $M$ is denoted by $\wsup M$ and
is called the \emph{weak supremum} of $M$.\medskip

$\left(\rm c\right) $ The \emph{weak minimum} of $M$ is the set $%
\wmin M=M\cap \winf M$ and its elements are the \emph{weakly minimal elements%
} of $M$.\medskip

$\left(\rm d\right) $   The \emph{weak maximum} of $M$ is the set $\wmax M=M\cap \wsup M$ and its elements are the \emph{weakly maximal elements} of $M$.
\end{definition}

The  next properties  related to the sets weak infimum, weak minimum, weak supremum, weak maximum of a subset $M$ of $Y^\bullet$ are  traced out from  \cite{BGW09,DGLL17,DL2017}.   


\begin{proposition}\cite[Proposition 2.1]{DL20}    
\label{pro_decomp}
Let $\emptyset \ne M,N\subset Y^\bullet$. One has

$\mathrm{(i)}$\  $\wsup M\ne \{+\infty_Y\}$ if and only if there exists $ Y\setminus  (M-\inte K)  \ne \emptyset$, 

$\mathrm{(ii)}$\  For all $ y\in Y$, $\wsup (y+M)=y+\wsup M$. 

$\mathrm{(iii)}$\   $\wsup (\wsup M+\wsup N)= \wsup  (M+\wsup N)=\wsup (M+N).$ 

\noindent
Assume further that $M\subset Y$ and  $\wsup M\ne \{+\infty_Y\}$ then it holds:

$\mathrm{(iv)}$\  $\wsup M-\inte K=M-\inte K$,

$\mathrm{(v)}$\  The following decomposition\footnote{Here, by the term ``decomposition" we mean the sets  in the right-hand side of the equality are disjoint.}  of $Y$ holds 
  $$Y=(M-\inte K)\cup \wsup M\cup (\wsup M + \inte K), $$ 

$\mathrm{(vi)}$\  $\wsup M=\cl(M-\inte K)\setminus (M-\inte K)$.


$\mathrm{(vii)}$\  If $0_Y\in N\subset -K$ then $\wsup (M+N)=\wsup M$. 

In particular, one has $\wsup (M-K)=\wsup (M-\bd K)=\wsup M$. 
\end{proposition}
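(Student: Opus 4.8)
The plan is to prove Proposition 2.2 item by item, since the statements are interrelated and several later items depend on earlier ones. The central object is the set $M - \inte K$, which is open (as a union of translates of the open set $-\inte K$), and the ``decomposition'' property (v) is really the heart of the matter; once it is in hand, almost everything else follows by elementary set manipulation. I would organize the work around (iv)--(vi) first, then harvest (i)--(iii) and (vii) as consequences or by similar direct arguments.

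First I would establish (iv): the inclusion $M - \inte K \subset \wsup M - \inte K$ is not obvious, so I would argue via the definition of $\wsup M$. Take $y \in M$ and $k \in \inte K$; I want $y - k \in \wsup M - \inte K$, i.e. there is $\bar v \in \wsup M$ with $y - k <_K \bar v$. Using Lemma 2.1(i) with the element $y$ and the interior direction $k$, together with the hypothesis $\wsup M \neq \{+\infty_Y\}$ (which by the as-yet-unproven (i), or directly from Definition 2.1, guarantees $\wsup M \cap Y \neq \emptyset$ is nonempty in a usable sense), one produces a point of $\wsup M$ dominating $y - k$ in the $<_K$ sense; the reverse inclusion $\wsup M - \inte K \subset M - \inte K$ uses the defining property of weakly supremal elements (for $\bar v \in \wsup M$ and $\tilde v = \bar v$ strictly above $\bar v - k$, there is $v \in M$ with $\bar v - k <_K v$, hence $\bar v - k \in M - \inte K$ after absorbing the gap using \eqref{int-plus}). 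I expect this to be the main obstacle: making the ``approximation from below'' in the definition of $\wsup$ interact cleanly with the cone arithmetic \eqref{int-plus} and Lemma 2.1.

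Next, for (v), I would show the three sets $M - \inte K$, $\wsup M$, $\wsup M + \inte K$ are pairwise disjoint and cover $Y$. Disjointness of the first two is immediate from the first clause of Definition 2.1(b) ($\bar v \not<_K v$ for $v \in M$). Disjointness of $\wsup M$ and $\wsup M + \inte K$ follows because $0_Y \notin \inte K$ (as $K$ is proper and pointed enough — actually one uses that $\bar v \in \wsup M$ cannot satisfy $\bar v <_K \bar v'$ for another $\bar v' \in \wsup M$, by the second clause of the definition combined with \eqref{int-plus}). Disjointness of $M - \inte K$ from $\wsup M + \inte K$ reduces via (iv) to disjointness of $\wsup M - \inte K$ from $\wsup M + \inte K$, again \eqref{int-plus}. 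For the covering: given $y \in Y$, if $y \notin M - \inte K$ and $y \notin \wsup M$, one must land in $\wsup M + \inte K$; this is exactly where (vi) is convenient, so I would prove (vi) in tandem — $\wsup M = \cl(M - \inte K) \setminus (M - \inte K)$ — by checking that $\wsup M \subset \bd(M-\inte K)$ (the ``weakly infimal/supremal'' conditions say precisely that $\bar v$ is not interior to, but is a limit of, $M - \inte K$) and conversely that any boundary point of $M - \inte K$ not in $M - \inte K$ meets the definition of a weakly supremal element (using Lemma 2.1(i) again to produce dominated points of $M$). Then $Y = (M-\inte K) \cup \cl(M - \inte K)^c$-type bookkeeping plus $\cl(M-\inte K) = (M - \inte K) \cup \wsup M$ gives the covering, with the third piece $\wsup M + \inte K$ identified as the complement of $\cl(M - \inte K)$.

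Finally I would mop up the remaining items. For (i): $\wsup M \neq \{+\infty_Y\}$ means there is a genuine weakly supremal element $\bar v \in Y$; then $\bar v \notin M - \inte K$ by disjointness, so $\bar v \in Y \setminus (M - \inte K)$; conversely if $Y \setminus (M - \inte K) \neq \emptyset$ then $M - \inte K \neq Y$, and by (vi)/(v) its boundary supplies a point of $\wsup M$ in $Y$. For (ii): translation by $y \in Y$ is a homeomorphism commuting with $<_K$ and with $+\inte K$, so the defining conditions transport verbatim. For (iii): apply (iv) repeatedly — $\wsup M + \wsup N - \inte K = (\wsup M - \inte K) + \wsup N = (M - \inte K) + \wsup N$, and similarly peel off $\wsup N$; since by (vi) the $\wsup$ of a set is determined by that set minus $\inte K$, equal ``minus-$\inte K$'' sets have equal $\wsup$, giving the chain of equalities (one must note $M + N \subset Y$ so (vi) applies, and that $M - \inte K + \wsup N$ equals $M + N - \inte K$ using (iv) for $N$). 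For (vii): $0_Y \in N \subset -K$ gives $M + N \subset M - K$ and $M \subset M + N - N \subset M + N + K$, hence $M + N - \inte K \subset M - K - \inte K = M - \inte K$ (by $K + \inte K = \inte K$) and $M - \inte K \subset M + N + K - \inte K = M + N - \inte K$; equal ``minus-$\inte K$'' sets, so equal $\wsup$ by (vi). The special case $N = -K$ or $N = -\bd K$ (note $0_Y \in -\bd K$ since $K$ is closed with nonempty interior, so $0_Y \in \bd K$) is then immediate.
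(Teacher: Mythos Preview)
The paper does not give its own proof of this proposition; it is quoted from \cite{DL20} (with roots in \cite{BGW09,DGLL17,DL2017}), so there is no in-paper argument to compare against.

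Your plan is the standard one and is essentially correct: the heart of the matter is indeed the identification $\wsup M=\bd(M-\inte K)$ in (vi), from which (iv), (v), and then (i)--(iii), (vii) follow by the cone arithmetic $K+\inte K=\inte K$ and the observation that sets with the same ``$-\inte K$'' have the same weak supremum. Two places in your sketch need tightening. First, in (iv) the inclusion $M-\inte K\subset\wsup M-\inte K$ is not delivered by Lemma~\ref{pro_1a}(i) alone in the way you suggest: that lemma does not hand you a $\bar v\in\wsup M$ dominating a \emph{prescribed} $y-k$. What works is a connectedness argument: for $a\in M-\inte K$ the convex open set $a+\inte K$ meets $M-\inte K$ (take $a=m-k$ and look at $a+k/2$), and it cannot lie entirely in $M-\inte K$ (otherwise Lemma~\ref{pro_1a}(i) forces $M-\inte K=Y$, contradicting $\wsup M\ne\{+\infty_Y\}$ via (i)); hence it meets $\bd(M-\inte K)=\wsup M$ by (vi). Second, your proof of (iii) via (iv) and (vi) is valid only when $M,N\subset Y$ with $\wsup M,\wsup N\subset Y$; since (iii) is stated for $M,N\subset Y^\bullet$, the degenerate cases ($\wsup M=\{+\infty_Y\}$, or $-\infty_Y\in M$, etc.) must be disposed of separately using the conventions in \eqref{2.4}. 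These are routine but should not be left implicit.
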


\begin{remark}
\label{rem_1hh}
It is clear  that $\winf M=-\wsup(-M)$ for all $M\subset Y^\bullet$ and  so, Proposition \ref{pro_decomp}  holds true also when  $\wsup$, $+\infty_Y$, $K$, and $\inte K$ are replaced by $\winf$, $-\infty_Y$, $-K$, and $-\inte K$, respectively. For instance, corresponding to $\mathrm{(v)}$, one has:

$\mathrm{(v')}$\  $\winf M = \cl(M    + \inte K)\setminus (M   + \inte K)$.
\end{remark}


$\bullet$ {\it Conjugate Mappings  of Vector-Valued Functions. } 
{Let $F \colon X\rightarrow Y^{\bullet }$.  The \emph{domain},    and the \emph{$K$-epigraph} of $F $  are defined  respectively by 
\begin{align*}
\dom F &:=\{x\in X:F (x)\neq +\infty _{Y}\}, \ \ 
\epi_{K}F: =\{(x,y)\in X\times Y:y\in F (x)+K\}.
\end{align*}
The mapping  $F $ is   \emph{proper} if $\dom F \neq \emptyset $ and $-\infty
_{Y}\notin F (X)$. It is \emph{$K$-convex} if $\epi_K F$ is convex in  $X \times Y$, 
 and it is  \emph{$K$-epi closed} if $\epi_K F$ is closed in the product space $X\times Y$   \cite[Definition 5.1]{Luc}, \cite{Bot2010}.    
When $Y =   \mathbb{R}$ and $K = \mathbb{R}_+$,   $\mathbb{R}_+$-epi closed property collapses to the lower semicontinuity   of the  real-valued function $F$.  The mapping  $F$ is called   \emph{positively $K$-lower semicontinuous} (lsc, for brief)  if $y^*\circ F$ is lsc for all $y^*\in K^+$ (see \cite{Bot2010,JSDL05})\footnote{This notion was used   in    \cite{Bot2010} and in  \cite{JSDL05}  as  ``star $K$-lower semicontinuous''.}. According  to  \cite[Theorem 5.9]{Luc}, every positively $K$-lsc mapping is $K$-epi closed but the converse is not true. Moreover,   when $Y=\RR$, the three notions lsc, positively $\RR_+$-lsc, and $\RR_+$-epi closed coincide with each other.}

For the space  $\mathcal{L}(X,Y)$  of all
 continuous linear mappings  from $X$ to $Y$, one  equippes with   the   \emph{topology of point-wise convergence}, i.e.,   if  $(L_{i})_{i\in I}\subset \mathcal{L}(X,Y)$ and $L\in \mathcal{L}(X,Y)$, $L_{i}\rightarrow L$ in $\mathcal{L}(X,Y) $
means  $L_{i}(x)\rightarrow L(x)$ in $Y$ for all $x\in X$. The zero of $\mathcal{L}(X,Y)$ is  $0_{\mathcal{L}}$.

Now let $S$ be a non-empty convex cone in $Z$. Recall that the {\it cone of positive operators} (see \cite{AB85,KLS89}) and {\it the cone of weakly positive operators} from $Z$ to $Y$ \cite{DGLL17} are defined respectively
\begin{align*}
\L_+(S,K)&:=\{T\in\L(Z,Y): T(S)\subset K\}, \\
\L_+^w(S,K)&:=\{T\in \L(Z,Y) : T(S)\cap (-\inte K)=\emptyset\}.
\end{align*}
When  $Y = \mathbb{R}$, both these  cones collapse to  $S^+\!\!:=\!\{ z^* \in Z^\ast \!: \!\!\la z^*, s \ra \geq 0, \forall s \in S \}$.

For $T\in \mathcal{L}(Z,Y)$,  $G\colon X\rightarrow Z^\infty$,  we define the composite function $T\circ G\colon X\rightarrow {Y}^\infty$ as:  
\begin{equation*}
(T\circ G)(x):=\left\{ 
\begin{array}{ll}
T(G(x)), & \text{if }G(x)\in Z, \\ 
+\infty _{Y}, & \text{if $G(x)=+\infty _{Z}.$}%
\end{array}%
\right.
\end{equation*}

The following notion of conjugate mapping  is specified from the corresponding  one for set-valued mappings in   \cite[Definition 7.4.2]{BGW09}, \cite[Definition 3.1]{Tanino92}.

\begin{definition} {\rm \cite{BGW09,Tanino92}} \label{def2.3} For  $F\colon X\rightarrow Y^\infty$,  the set-valued mapping $F
^{\ast }\colon \mathcal{L}(X,Y)\rightrightarrows Y^{\bullet }$ defined by 
$F ^{\ast }(L):=\wsup\{L(x)-F (x):x\in X\}$
is called the \emph{conjugate mapping} of $F $. The \emph{$K$-epigraph}  and the \emph{domain} of $F^*$ and  are, respectively,
\begin{align*}
\epi_{K}\!\! F ^{\ast }\!\!:=\!\!\big\{(L,y)\in \mathcal{L}(X,Y)\times Y\! \!\!:\! y\in F
^{\ast }(L)+K\!\big\}, \  \dom\! F ^{\ast }\!\!:=\!\!\big\{L\in \mathcal{L}(X,Y) \!:\!F ^{\ast }(L)\!\neq\!
\! \{+\infty
_{Y}\}\!\big\}. 
\end{align*}
\end{definition}
Throughout this paper, we use the only  cone $K$ in $Y$, and so, for the sake of simplicity,  from now on we will write $\epi F$ and $\epi F^*$
instead of $\epi_KF$ and $\epi_K F^*$, respectively. 
Further more, by   \cite[Lemma {3.5}]{DGLL17}, if  $F\colon X\rightarrow Y^\infty$ is a proper
mapping then $\epi F^{\ast}$ is a closed  (but not necessarily convex) subset of $
\mathcal{L}(X,Y)\times Y.$ Moreover,  for  $(L,y)\in \mathcal{L}(X,Y)\times Y$, one has (see \cite[Proposition 2.10]{DGLMJOTA16})
\begin{equation}\label{epiF*}
(L,y)\in \epi F ^{\ast }\quad\Longleftrightarrow\quad \Big(F (x)-L(x)+y\notin -\inte %
K,\;\forall x\in X\Big). 
\end{equation}
Lastly, for a subset $D\subset X$, the \emph{indicator map} $
I_{D}\colon X\rightarrow {Y}^{\bullet }$ is defined by $I_D (x) = 0_Y $ if $x \in D$ and $I_D(x) = +\infty_Y $, otherwise.   It is worth noting that 
 \cite[Proposition {3.1}]{DGLL17},   
\begin{equation} \label{domI*}
\dom I^*_{-S}=\L_+^w(S,K) .
\end{equation}


\subsection{Structure $(\mathcal{P}_p(Y)^\infty, \preccurlyeq_K, {\uplus})$, extended epigraphs, and  $\nplus$-summation}

Let $\mathcal{P}_0(Y^\bullet)$ be the collection  of all non-empty subsets of $Y^\bullet$.  
The  ordering ``$\preccurlyeq_K$''  on   $\mathcal{P}_0(Y^\bullet)$  is defined   \cite{DL2017}   as, 
 for $M,N \in \mathcal{P}_0(Y^\bullet)$, 
\begin{equation}
\label{eq_6.33a}
M \ \preccurlyeq_K N \ \     \Longleftrightarrow \ \   \left( v\not<_Ku, \; \forall u\in M,\; \forall v\in N\right).  
\end{equation}
Other orderings on $\mathcal{P}_0(Y^\bullet)$ are also proposed in the literature (see, e.g.,  \cite{Khan-Tammer-Zali}).

\begin{proposition}\label{prop_4gg} The following assertions hold 

$\mathrm{(i)}$\ For all  $  M, N \in \mathcal{P}_0(Y^\bullet)\setminus\{\{+\infty_Y\},\{-\infty_Y\}\}$, one has  
\begin{eqnarray*}  M  \preccurlyeq_K N     \ \     &\Longleftrightarrow& \ \      N\cap (M-\inte K)= \emptyset   \ \ 
    \Longleftrightarrow \ \  M\cap (N+\inte K)= \emptyset, 
    \end{eqnarray*}
    \indent $\mathrm{(ii)}$ For all $M\in \mathcal{P}_0(Y^\bullet)$, one has  $\winf M \preccurlyeq_K M \textrm{ and } M  \preccurlyeq_K \wsup M, $

$\mathrm{(iii)}$  $\textrm{If } M\subset N \subset Y^\bullet$ then $\wsup M \preccurlyeq_K \wsup N$, 

$\mathrm{(iv)}$ For all  $M,N\in \mathcal{P}_0(Y^\bullet)$, if $M\preccurlyeq_K N$ then $\wsup M \preccurlyeq_K \winf N.$
\end{proposition}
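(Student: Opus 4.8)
The plan is to prove each of the four assertions of Proposition~\ref{prop_4gg} in turn, relying mainly on the decomposition results of Proposition~\ref{pro_decomp} (and Remark~\ref{rem_1hh}) together with the definition \eqref{eq_6.33a} of $\preccurlyeq_K$.

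\textbf{Part (i).} First I would unwind the definition: $M\preccurlyeq_K N$ means $v\not<_K u$ for all $u\in M$, $v\in N$, i.e.\ $u-v\notin-\inte K$, i.e.\ $v-u\notin\inte K$ for all such $u,v$. Saying this fails for some pair is exactly saying there exist $u\in M$, $v\in N$ with $u-v\in\inte K$, equivalently $u\in v+\inte K\subset N+\inte K$, equivalently $v\in u-\inte K\subset M-\inte K$. Taking contrapositives gives the two claimed equivalences $N\cap(M-\inte K)=\emptyset$ and $M\cap(N+\inte K)=\emptyset$. The only care needed is the exclusion of $\{+\infty_Y\}$ and $\{-\infty_Y\}$, which guarantees the relevant sets sit inside $Y$ so that the cone arithmetic in \eqref{2.4} does not interfere; this is a routine check.

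\textbf{Parts (ii) and (iii).} For (ii), $\winf M\preccurlyeq_K M$ is immediate from Definition~\ref{def1}(a): every weakly infimal element $\bar v$ satisfies $v\not<_K\bar v$ for all $v\in M$, which is precisely the defining condition of $\winf M\preccurlyeq_K M$. Dually, Definition~\ref{def1}(b) gives $M\preccurlyeq_K\wsup M$. For (iii), suppose $M\subset N$. If $\wsup M\preccurlyeq_K\wsup N$ failed, by (i) there would be $\bar u\in\wsup M$ and $\bar v\in\wsup N$ with $\bar v<_K\bar u$; applying the defining property of $\wsup N$ (Definition~\ref{def1}(b)) to the element $\bar u$, since $\bar v<_K\bar u$ there is $v\in N$ with $\bar v<_K v$. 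But I instead want a contradiction with $\wsup M$; the cleaner route is via Proposition~\ref{pro_decomp}(iv): for $M\subset Y$ with $\wsup M\neq\{+\infty_Y\}$, $\wsup M\subset\cl(M-\inte K)\subset\cl(N-\inte K)$, and $\wsup N$ is disjoint from $N-\inte K$ by the decomposition (v), hence (using (vi) that $\wsup N=\cl(N-\inte K)\setminus(N-\inte K)$) one checks directly that no $\bar v\in\wsup N$ can lie in $M-\inte K$, giving $\wsup N\cap(\wsup M-\inte K)=\emptyset$ via (iv) again. I would need to handle separately the degenerate cases where $\wsup M$ or $\wsup N$ equals $\{+\infty_Y\}$ or where $M\not\subset Y$, but in those cases $\preccurlyeq_K$ holds trivially or by the conventions in \eqref{2.4}.

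\textbf{Part (iv).} This is the substantive one. Assume $M\preccurlyeq_K N$; I want $\wsup M\preccurlyeq_K\winf N$. By Remark~\ref{rem_1hh} it is equivalent to show $-\winf N=\wsup(-N)$ and $\wsup M$ are $\preccurlyeq_K$-comparable in the right way, but it is cleanest to argue directly. Using (i), $M\preccurlyeq_K N$ gives $M\cap(N+\inte K)=\emptyset$, so $M\subset Y\setminus(N+\inte K)$. Now by the decomposition of Proposition~\ref{pro_decomp}(v)/(v') applied to $N$, we have $Y=(N+\inte K)\cup\winf N\cup(\winf N-\inte K)$ (disjoint), so $M\subset\winf N\cup(\winf N-\inte K)=\cl(\winf N-\inte K)$ --- wait, more precisely $M$ lies in the complement of $N+\inte K$, which is $\winf N\cup(\winf N-\inte K)$, hence $M-\inte K\subset(\winf N-\inte K)$ (since $\winf N\cup(\winf N-\inte K)-\inte K=\winf N-\inte K$ using $K+\inte K=\inte K$). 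Therefore $\cl(M-\inte K)\subset\cl(\winf N-\inte K)$, and then $\wsup M=\cl(M-\inte K)\setminus(M-\inte K)$ by (vi); I must show this is disjoint from $\winf N-\inte K$. Since $\winf N-\inte K$ is open and contained in $M-\inte K$? No --- the inclusion I have is $M-\inte K\subset\winf N-\inte K$, the wrong direction for that. Instead: $\wsup M\subset\cl(M-\inte K)\subset\cl(\winf N-\inte K)$, and $\winf N=\cl(\winf N+\inte K)\setminus(\winf N+\inte K)$; the key point is that $\cl(\winf N-\inte K)$ is disjoint from $\winf N+\inte K$ (by the decomposition for $N$, since $\cl(N+\inte K)=\winf N\cup(\winf N+\inte K)$ meets $\winf N - \inte K$ only along $\winf N$, and boundary considerations via (vi) confirm $\cl(\winf N - \inte K)\cap(\winf N+\inte K)=\emptyset$). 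Hence $\wsup M\cap(\winf N+\inte K)=\emptyset$, which by (i) is exactly $\wsup M\preccurlyeq_K\winf N$.

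\textbf{Main obstacle.} The genuine difficulty is Part~(iv): keeping the inclusions between $M-\inte K$, $\winf N\pm\inte K$, and their closures pointed in the correct direction, and correctly invoking the disjointness clauses of the decomposition in Proposition~\ref{pro_decomp}(v), (v'), (vi) rather than a tempting but false ``monotonicity of closure'' step. I would write Part~(iv) carefully using only the four stated facts --- $\wsup M=\cl(M-\inte K)\setminus(M-\inte K)$, the disjoint decomposition of $Y$, the identity $\wsup M-\inte K=M-\inte K$, and $K+\inte K=\inte K$ from \eqref{int-plus} --- and treat the $\pm\infty_Y$ edge cases by the conventions in \eqref{2.4} at the very start so the rest of the argument stays inside $Y$.
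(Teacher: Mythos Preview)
The paper does not prove this proposition at all: it simply cites \cite[Proposition~3.1]{DL20} for (i)--(iii) and \cite[Proposition~3.2]{DL2017} for (iv). So your proposal is not a comparison target so much as a standalone attempt, and it is largely sound but uneven in execution.

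Parts (i) and (ii) are correct and cleanly argued from the definitions. One minor inaccuracy in (i): excluding $\{+\infty_Y\}$ and $\{-\infty_Y\}$ does \emph{not} force $M,N\subset Y$, only that each contains at least one finite point; the equivalence still goes through because the relations $u-v\in\inte K$ are vacuous when either coordinate is infinite, but your stated reason is off.

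Part (iii): your first attempt via Definition~\ref{def1}(b) would actually finish quickly --- if $\bar v\in\wsup N$ and $\bar u\in\wsup M$ with $\bar v<_K\bar u$, then by the second clause of Definition~\ref{def1}(b) applied to $M$ there exists $m\in M\subset N$ with $\bar v<_K m$, contradicting $\bar v\in\wsup N$. You abandon this for a detour through Proposition~\ref{pro_decomp}(iv)--(vi) that also works but is noisier; trust the first idea.

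Part (iv) is where the writing deteriorates: the argument contains several ``wait'' reversals and the decisive step --- that $\cl(\winf N-\inte K)$ is disjoint from $\winf N+\inte K$ --- is asserted with a vague appeal to ``boundary considerations.'' The clean justification you want is: by Proposition~\ref{pro_decomp}(iv) (inf version), $\winf N+\inte K=N+\inte K$ is open, hence its complement $\winf N\cup(\winf N-\inte K)$ is closed; since $\winf N-\inte K$ sits inside this closed set, so does its closure, giving the required disjointness immediately. With that one line inserted (and the degenerate cases $\wsup M=\{-\infty_Y\}$ or $\winf N=\{+\infty_Y\}$ disposed of up front), your argument for (iv) is complete.
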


\begin{proof}
 $\rm(i)-(iii)$  are in \cite[Proposition 3.1]{DL20} while (iv)    follows from  Proposition 3.2  in \cite{DL2017}.      \end{proof}

We say that $U    \subset   Y  $  has a {\it $(Y,K)$-partition style}  if  the following decomposition of $Y$  holds 
\begin{equation} \label{defPp}
Y=(U-\inte K)\cup U\cup (U+\inte K).
\end{equation}  
The collection of all  $(Y,K)$-partition style subsets of $Y$ is     denoted   by $\mathcal{P}_p(Y)$. 
Denote also, 
$\mathcal{P}_p(Y)^\bullet := \mathcal{P}_p(Y) \cup \{ \{ + \infty_Y\}, \{ - \infty_Y\}\}$. 
Clearly that  if  $M\subset Y^\bullet$ then  $\pm\wsup M \in \mathcal{P}_p( Y)^\bullet$,  $\pm\winf M \in \mathcal{P}_p( Y)^\bullet$ and (by \eqref{eq_6.33a}),   for any  $U \in \mathcal{P}_p(Y)$,  one has $U   \preccurlyeq_K \{ + \infty_Y\}$ and $\{ - \infty_Y\}  \preccurlyeq_K U$.    
Moreover,   $\left(\mathcal{P}_p(Y)^\bullet, \preccurlyeq_K\right) $  is an ordered space \cite{DL2017}, i.e.,  $\preccurlyeq_K$ is a   reflexive, anti-symestric, and transitive partial order.

\begin{proposition}\label{pro_5hh} 
{\emph{\cite[Proposition 3.2, Lemma 3.1]{DL20}}}
 Let  $U,V\in \mathcal{P}_p(Y)^\bullet$.     Then 

$\mathrm{(i)}$    If\  $U\subset V$ then $U=V$. 

\noindent Moreover, if   $U,V\subset Y$ then

 $\mathrm{(ii)}$   $U+K=U\cup (U+\inte K)$ and $U-K=U\cup (U-\inte K),$

$\mathrm{(iii)}$ $Y=(U-\inte K)\cup (U+K)=(U-K)\cup (U+\inte K), $

 $\mathrm{(iv)}$    $ U\preccurlyeq_K V\;  \Longleftrightarrow\; V\subset U+K \; 
\Longleftrightarrow\;  U\subset V-K,  $

  $\mathrm{(v)}$   $\wsup U=\winf U=U$.   
\end{proposition}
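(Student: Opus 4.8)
\textbf{Proof plan for Proposition \ref{pro_5hh}.}

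The plan is to prove the five assertions in sequence, using the $(Y,K)$-partition-style decomposition \eqref{defPp} as the central structural fact and the characterization of $\preccurlyeq_K$ from Proposition \ref{prop_4gg}(i) wherever the ordering is involved. First I would dispose of the degenerate cases $U,V\in\{\{+\infty_Y\},\{-\infty_Y\}\}$ separately, since for those the claims are immediate from the conventions in \eqref{2.4} and from the observation (noted right before the statement) that $\{-\infty_Y\}\preccurlyeq_K U\preccurlyeq_K\{+\infty_Y\}$ for every $U\in\mathcal{P}_p(Y)$; so from then on I may assume $U,V\subset Y$.

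For \textbf{(i)}, suppose $U\subset V$ with both in $\mathcal{P}_p(Y)$. Applying \eqref{defPp} to $V$ gives $Y=(V-\inte K)\cup V\cup(V+\inte K)$, a disjoint union. I would show $U$ cannot meet $V-\inte K$ nor $V+\inte K$: if $u\in U$ and $u\in V-\inte K$, then $u<_K v$ for some $v\in V$; but also $u\in V$ (since $U\subset V$), and this contradicts the fact that $V$, being partition-style, satisfies $v\not<_K v'$ is not quite the point --- rather, one uses that $V\cap(V-\inte K)=\emptyset$ from the disjointness in \eqref{defPp}. Hence $U\subset V$, so $U\cap(V-\inte K)=\emptyset$ and $U\cap(V+\inte K)=\emptyset$, forcing $U=V$. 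Assertion \textbf{(v)} then follows quickly: $\wsup U\in\mathcal{P}_p(Y)^\bullet$ always, and by Proposition \ref{prop_4gg}(ii) one has $U\preccurlyeq_K\wsup U$; combined with $U\in\mathcal{P}_p(Y)$ and the decomposition characterizations, $U-\inte K=\wsup U-\inte K$ (cf.\ Proposition \ref{pro_decomp}(iv)) forces $\wsup U$ and $U$ to induce the same decomposition of $Y$, hence $\wsup U=U$ by the uniqueness embodied in (i); the argument for $\winf U$ is symmetric via Remark \ref{rem_1hh}.

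For \textbf{(ii)}, the inclusion $U\cup(U+\inte K)\subset U+K$ uses $\inte K\subset K$ and $0_Y\in K$; the reverse uses $K=\{0_Y\}\cup\inte K$? --- that is false in general, so instead I would argue: take $u+k$ with $u\in U$, $k\in K$. By \eqref{defPp} applied to the point $u+k$, it lies in one of $U-\inte K$, $U$, or $U+\inte K$. It cannot lie in $U-\inte K$, for then $u+k<_K u'$ for some $u'\in U$, i.e.\ $u'-u-k\in\inte K$, so $u'-u\in\inte K+k\subset\inte K$ (using $\inte K + K = \inte K$), giving $u<_K u'$ with both in $U$ --- contradicting disjointness of $U$ and $U-\inte K$ in \eqref{defPp}. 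Hence $u+k\in U\cup(U+\inte K)$. The statement for $U-K$ is symmetric. Then \textbf{(iii)} is a direct consequence: $(U-\inte K)\cup(U+K)=(U-\inte K)\cup U\cup(U+\inte K)=Y$ by (ii) and \eqref{defPp}, and likewise for the other decomposition.

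Finally \textbf{(iv)}: by Proposition \ref{prop_4gg}(i), $U\preccurlyeq_K V\iff V\cap(U-\inte K)=\emptyset\iff U\cap(V+\inte K)=\emptyset$. Using (iii) in the form $Y=(U-\inte K)\cup(U+K)$ disjointly, $V\cap(U-\inte K)=\emptyset$ is equivalent to $V\subset U+K$; similarly, from $Y=(V-K)\cup(V+\inte K)$ disjointly, $U\cap(V+\inte K)=\emptyset$ is equivalent to $U\subset V-K$. Chaining these equivalences gives the triple characterization. The main obstacle I anticipate is the careful handling of the disjointness claims in \eqref{defPp} --- specifically, making sure that "partition style" is used as a genuine \emph{disjoint} decomposition (so that, e.g., $U\cap(U+\inte K)=\emptyset$) rather than merely a covering, and repeatedly invoking the absorption identity $K+\inte K=\inte K$ from \eqref{int-plus} to keep the $\inte K$ bookkeeping clean.
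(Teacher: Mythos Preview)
The paper does not supply its own proof of Proposition~\ref{pro_5hh}; it is quoted from \cite[Proposition 3.2, Lemma 3.1]{DL20}. So there is no ``paper's proof'' to compare against, and your plan stands or falls on its own.

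Your arguments for (ii), (iii) and (iv) are correct and efficient; the use of $K+\inte K=\inte K$ in (ii) and the disjointness in (iii) to convert the $\preccurlyeq_K$-characterizations of Proposition~\ref{prop_4gg}(i) into the inclusion characterizations is exactly the right mechanism.

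There is, however, a genuine gap in your argument for (i). You apply the partition \eqref{defPp} to $V$ and deduce that $U\cap(V-\inte K)=\emptyset$ and $U\cap(V+\inte K)=\emptyset$; but since $U\subset V$ and the three $V$-pieces are disjoint, this is immediate and merely re-states the hypothesis $U\subset V$. It does \emph{not} force $U=V$. The missing step is the reverse inclusion $V\subset U$, and for that you must use the partition via $U$, not via $V$: from $U\subset V$ one gets $U-\inte K\subset V-\inte K$ and $U+\inte K\subset V+\inte K$; since $V$ is disjoint from $V\pm\inte K$ it is disjoint from the smaller sets $U\pm\inte K$, and then the $U$-partition of $Y$ forces $V\subset U$.

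Your sketch for (v) is also thin: the assertion that ``$U-\inte K=\wsup U-\inte K$ forces the same decomposition'' requires justification (you still need $U+\inte K=\wsup U+\inte K$, which does not follow automatically). A cleaner route, once (i) is fixed, is to show $\wsup U\subset U$ directly from Definition~\ref{def1}(b): any $\bar v\in\wsup U$ lies outside $U-\inte K$ by the supremal condition, and if $\bar v=u+k\in U+\inte K$ then $u<_K\bar v$, so condition~(b) of the weak supremum yields some $v\in U$ with $u<_Kv$, placing $u\in U-\inte K$ and contradicting the disjointness in \eqref{defPp}. Hence $\bar v\in U$, and (i) gives $\wsup U=U$; the $\winf$ case is symmetric.
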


The   new kind of  ``sum" of two sets   on the collection $\mathcal{P}_p(Y)^\infty :=    \mathcal{P}_p(Y)\cup \{\{+ \infty_Y\}\} $ (called ``WS-sum"), has just introduced in \cite{DL20}, will be one of the main tools for our further  study.

\begin{definition}  \cite{DL20} \label{def_3.1zz}
 For  $U,V\in  \mathcal{P}_p (Y)^\infty  $, the  {\it WS-sum} of $U$ and $V$, denoted by  $U\uplus V$,  is  a set from  $\mathcal{P}_p (Y)^\infty $  and is  defined by: 
\begin{equation}\label{newsum}
 U\uplus V\   :=   \  \wsup (U+V).
\end{equation}
\end{definition} 
Some  properties of the structure $(\mathcal{P}_p(Y)^\infty, \preccurlyeq_K, {\uplus})$  are given in the next proposition.
\begin{proposition} \label{prop_1ab}  
{\emph{\cite[Proposition 3.3]{DL20}}}
Let  $U,V, W\in \mathcal{P}_p(Y)^\infty$, $y\in Y$.  One has

$\mathrm{(i)}$   $U\uplus (-\bd K)=U$,

$\mathrm{(ii)}$  $U\uplus V=V\uplus U$   (commutative), 

$\mathrm{(iii)}$ \  $(U\uplus V)\uplus W= U\uplus (V\uplus W)$ (associative),

$\mathrm{(iv)}$  If $U\preccurlyeq_K V$ then $U\uplus W\preccurlyeq_K V\uplus W$  (compatible of  the sum $\uplus$ with $\preccurlyeq_K$),

 $\mathrm{(v)}$  $y\in (U\uplus V)+K$ if and only if there exists $W\in \mathcal{P}_p(Y)$ such that  $U \preccurlyeq_K W$ and $y\in W\uplus V$.
\end{proposition}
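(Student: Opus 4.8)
The plan is to prove each of the five assertions $\mathrm{(i)}$--$\mathrm{(v)}$ by reducing them to already-established facts about the weak supremum, chiefly Proposition~\ref{pro_decomp} and Proposition~\ref{prop_4gg}, together with the structural facts in Proposition~\ref{pro_5hh}. The guiding idea is that $U\uplus V = \wsup(U+V)$ is a $(Y,K)$-partition style set (this is built into Definition~\ref{def_3.1zz}), so every identity between such sets can be checked by comparing the associated \emph{strict-lower-shadow} sets $(\cdot)-\inte K$, because by Proposition~\ref{pro_5hh}$\mathrm{(i)}$ a partition style set is uniquely determined once its $-\inte K$ shadow is known (two partition style sets with the same shadow have the same closure, hence by $\mathrm{(vi)}$ of Proposition~\ref{pro_decomp} are equal). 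I will treat the degenerate cases where one of $U,V,W$ equals $\{+\infty_Y\}$ separately; there the conventions in \eqref{2.4} make each claim immediate, so in the main argument I may assume $U,V,W\subset Y$.

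For $\mathrm{(i)}$, since $0_Y\in -\bd K\subset -K$, Proposition~\ref{pro_decomp}$\mathrm{(vii)}$ gives $\wsup(U+(-\bd K))=\wsup U$, and $\wsup U=U$ by Proposition~\ref{pro_5hh}$\mathrm{(v)}$; hence $U\uplus(-\bd K)=U$. For $\mathrm{(ii)}$, commutativity is just commutativity of the Minkowski sum $U+V=V+U$ inside $\wsup(\cdot)$. For $\mathrm{(iii)}$, associativity, I would apply Proposition~\ref{pro_decomp}$\mathrm{(iii)}$ twice: $(U\uplus V)\uplus W=\wsup(\wsup(U+V)+W)=\wsup((U+V)+W)=\wsup(U+(V+W))=\wsup(U+\wsup(V+W))=U\uplus(V\uplus W)$, where the middle equality uses associativity of Minkowski sum and the flanking ones use $\wsup(\wsup A+B)=\wsup(A+B)$. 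For $\mathrm{(iv)}$, compatibility: from $U\preccurlyeq_K V$ I get, by Proposition~\ref{prop_4gg}$\mathrm{(i)}$ (equivalently Proposition~\ref{pro_5hh}$\mathrm{(iv)}$), that $V\subset U+K$; adding $W$ and then $K$ yields $V+W\subset U+W+K$, so $(V+W)-\inte K\subset (U+W)-\inte K$ (using $K+\inte K=\inte K$ from \eqref{int-plus}); passing to $\wsup$ via Proposition~\ref{pro_decomp}$\mathrm{(iv)}$ ($\wsup A-\inte K=A-\inte K$) and Proposition~\ref{prop_4gg}$\mathrm{(i)}$ gives $U\uplus W\preccurlyeq_K V\uplus W$.

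The real work is in $\mathrm{(v)}$, the ``$W$-interpolation'' characterization, and I expect this to be the main obstacle. For the implication ($\Leftarrow$): if $U\preccurlyeq_K W$ and $y\in W\uplus V$, then by $\mathrm{(iv)}$ we have $U\uplus V\preccurlyeq_K W\uplus V$, and since $W\uplus V$ is partition style, $y\in W\uplus V\subset (U\uplus V)+K$ by Proposition~\ref{pro_5hh}$\mathrm{(iv)}$; so $y\in(U\uplus V)+K$. For the harder direction ($\Rightarrow$): assume $y\in(U\uplus V)+K=\wsup(U+V)+K$. By the decomposition in Proposition~\ref{pro_decomp}$\mathrm{(v)}$, $(U\uplus V)+K = \wsup(U+V)\cup(\wsup(U+V)+\inte K)$, so either $y\in\wsup(U+V)$ or $y\in\wsup(U+V)+\inte K$; in both cases $y\notin (U+V)-\inte K$ (using $\wsup A-\inte K=A-\inte K$ and the disjointness in $\mathrm{(v)}$). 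The candidate I would build is $W:=\wsup\big(U+(y-V)\big)$ — informally, ``$W = (U\uplus V)$ re-centered so that $y$ is attainable in $W\uplus V$'' — or, more symmetrically, $W := \wsup\{\,u+y-v : u\in U,\ v\in V\,\}$, which is a shift-type construction so that $W$ is partition style by the remarks after \eqref{defPp}. Then I must check two things: (a) $y\in W\uplus V$, i.e. $y\in\wsup(W+V)$; and (b) $U\preccurlyeq_K W$. For (a), $W+V=\wsup(U+y-V)+V$, so $\wsup(W+V)=\wsup(U+y-V+V)$ by Proposition~\ref{pro_decomp}$\mathrm{(iii)}$, and $y\in\wsup(U+y-V+V)$ should follow because $y\notin (U-V+V)-\inte K = (U+V)-\inte K - \dots$ — this needs care with $V-V$, so I may instead verify $y\in\wsup(W+V)$ directly from the definition of weakly supremal element, using that $y\notin(U+V)-\inte K$ to get $y\notin(W+V)-\inte K$, and using Lemma~\ref{pro_1a}$\mathrm{(i)}$ to find, for any $\tilde v>_K y$, an element of $W+V$ strictly below $\tilde v$. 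For (b), $U\preccurlyeq_K W=\wsup(U+y-V)$: by Proposition~\ref{prop_4gg}$\mathrm{(ii)}$ it suffices to show $U\preccurlyeq_K U+y-V$, i.e. by Proposition~\ref{prop_4gg}$\mathrm{(i)}$ that $U\cap\big((U+y-V)+\inte K\big)=\emptyset$; this is where $y\notin(U+V)-\inte K$ is used, rewritten as $y-v\notin u-\inte K$... again the $V$-twice issue appears, so I anticipate needing to choose the cleaner one-sided construction $W:=\wsup\big(y-V\big)\uplus U$ or similar and to lean on associativity $\mathrm{(iii)}$ and Proposition~\ref{pro_decomp}$\mathrm{(vii)}$ to collapse spurious $V$-terms. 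I would finish by collecting these verifications; the delicate point throughout is bookkeeping the $\pm\inte K$ shifts and the cases $y\in\wsup(U+V)$ versus $y\in\wsup(U+V)+\inte K$ uniformly, which the disjoint decomposition of Proposition~\ref{pro_decomp}$\mathrm{(v)}$ is designed to handle.
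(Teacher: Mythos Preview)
The paper does not prove this proposition; it is quoted verbatim from \cite[Proposition~3.3]{DL20}, so there is no in-paper argument to compare against. Your reductions for $\mathrm{(i)}$--$\mathrm{(iii)}$ are correct and are exactly the natural ones (Proposition~\ref{pro_decomp}$\mathrm{(vii)}$ and $\mathrm{(iii)}$, Proposition~\ref{pro_5hh}$\mathrm{(v)}$). In $\mathrm{(iv)}$ you have a sign slip: from $V\subset U+K$ you cannot conclude $(V+W)-\inte K\subset (U+W)-\inte K$, because $K-\inte K$ does not lie in $-\inte K$. Use the equivalent form $U\subset V-K$ from Proposition~\ref{pro_5hh}$\mathrm{(iv)}$ instead; then $(U+W)-\inte K\subset (V+W)-K-\inte K=(V+W)-\inte K$ (now $-K-\inte K=-\inte K$ is the correct application of \eqref{int-plus}), and your conclusion follows via Proposition~\ref{pro_decomp}$\mathrm{(iv)}$ and Proposition~\ref{prop_4gg}$\mathrm{(i)}$ as you indicate.

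The genuine gap is in $\mathrm{(v)}$, forward direction. Your candidate $W=\wsup(U+y-V)$ drags in $V-V$ and, as you yourself flag, the verification does not close. A much simpler choice works: since $y\in (U\uplus V)+K$, write $y=y_0+k$ with $y_0\in U\uplus V=\wsup(U+V)$ and $k\in K$, and set $W:=U+k$. Translates of partition style sets are partition style (immediate from the defining decomposition \eqref{defPp}), so $W\in\mathcal{P}_p(Y)$. Then $U\preccurlyeq_K W$ because $W=U+k\subset U+K$ (Proposition~\ref{pro_5hh}$\mathrm{(iv)}$), and $W\uplus V=\wsup(U+V+k)=k+\wsup(U+V)$ by Proposition~\ref{pro_decomp}$\mathrm{(ii)}$, which contains $k+y_0=y$. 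The reverse implication you give is correct as stated, using $\mathrm{(iv)}$ and Proposition~\ref{pro_5hh}$\mathrm{(iv)}$.
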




\begin{definition} \cite{DL20}  \label{def_exepi}     $\phantom{x}$
$\textrm{(i)} $	The  {\it $K$-extended epigraph} of the conjugate mapping $F^*$ (of the mapping $F: X \to Y^{\bullet}$)  is defined as  
\begin{equation} \label{extepi}
\mathfrak{E}{\rm pi}  F^*:=\{(L,U)\in \L(X,Y)\times \mathcal{P}_p(Y): L\in \dom F^*,\;  F^*(L)\preccurlyeq_K U\}. 
\end{equation} 
	
$\textrm{(ii)}$ For ${\mathscr{M},\mathscr{N}}\subset  \L(X,Y)\times \mathcal{P}_p(Y)$,   the {\it $\nplus$-sum}  of these two sets is defined as follows: 
\begin{equation}\label{eq_12abc}
{\mathscr{M}\nplus\mathscr{N}:=\{(L_1+L_2, U_1\uplus U_2):(L_1,U_1)\in \mathscr{M},\; (L_2,U_2)\in \mathscr{N}\}.}
\end{equation}
\end{definition}
If  $F, G : X \raa Y^\bullet$ then the $\nplus$-sum of  $\mathfrak{E}{\rm pi }  F^* $ and  $   \mathfrak{E}{\rm pi }  G^*$,  is
$$
\mathfrak{E}{\rm pi }  F^*  \nplus   \mathfrak{E}{\rm pi }  G^*= \{(L_1+L_2, U_1\uplus U_2)\, : \, (L_1,U_1)\in \mathfrak{E}{\rm pi}  F^\ast,\; (L_2,U_2)\in \mathfrak{E}{\rm pi}  G^*   \}. 
$$
We can understood simply  the extended  epigraph $\mathfrak{E}{\rm pi}  F^*$  is   the ``epigraph" of   $F^\ast$ which is  considered as a single valued-mapping  $F^\ast : \L(X, Y) \raa  ( \mathcal{P}_p(Y),\preccurlyeq_K)$ and the ``epigraph" defined in the same way as the one of a real-valued function.  

It is also worth observing that from  the definition of $\nplus $-sum  and  Proposition \ref{prop_1ab},   the     $\nplus$-sum  is  commutative and associative on     $\L(X,Y)\times \mathcal{P}_p(Y)$.  

We      introduce  the set-valued mapping 
\begin{align} \label{Psi}
\Psi \colon \L(X,Y)\times \mathcal{P}_p(Y)&\rightrightarrows \L(X,Y)\times Y     \nonumber   \\
(L,U)\ \ \ \ \ \ &\mapsto \Psi (L,U):=\{L\}\times U.  
\end{align}
The relation between   $ \exepi F^\ast  \subset   \L(X,Y)\times \mathcal{P}_p(Y) $ and   $\epi F^\ast   \subset   \L(X,Y)\times Y $ and some simple properties of  the $\nplus$-sum and the mapping $\Psi$ now   are given  in the next proposition.

\begin{proposition}  \label{rel_epi} 
Let $F\colon X\to Y^\bullet$ be  a proper mapping, and 
  $\mathscr{M}, \mathscr{N}, \mathscr{Q},  \mathscr{M}_i  \subset   \L(X,Y)\times \mathcal{P}_p(Y) $,  for all $i \in I$  ($I$ is an arbitrary index set).  
  Then

  $\mathrm{(i)}$\    $\epi F^*=\Psi (\exepi F^*)$, 
   
 $\mathrm{(ii)}$\  $\mathscr{M} \subset \mathscr{N}   \ \ \ \Longrightarrow \ \ \    \Psi(\mathscr{M}) \subset \Psi (\mathscr{N})$, 
 
   $\mathrm{(iii)}$\  $  \mathscr{M} \subset \mathscr{N}   \ \ \ \Longrightarrow \ \ \      \mathscr{M}  \nplus \mathscr{Q}      \subset \mathscr{N}   \nplus \mathscr{Q}$,  \    
   
  $\mathrm{(iv)}$\    $\bigcup\limits_{i \in I}  \left(\mathscr{M}_i \boxplus\mathcal{N}\right)=\left(\bigcup\limits_{i \in I}  \mathscr{M}_i \right) \boxplus\mathcal{N}$,  
  
  $\mathrm{(v)}$\       $\Psi \big(\bigcup\limits_{i \in I}  \mathscr{M}_i \big)\ \  = \ \  \bigcup\limits_{i \in I} \Psi\big(\mathscr{M}_i\big)$.

\end{proposition}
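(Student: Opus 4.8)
The plan is to verify each of the five assertions separately, since they are essentially formal bookkeeping statements about the maps $\Psi$ and $\nplus$ and the set inclusions involved. The only genuinely substantive item is (i); the rest are one- or two-line set-theoretic arguments.

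For (i), I would argue by mutual inclusion, tracing through the definitions. By definition, $\Psi(\exepi F^*) = \bigcup\{\{L\}\times U : (L,U)\in\exepi F^*\} = \bigcup\{\{L\}\times U : L\in\dom F^*,\ F^*(L)\preccurlyeq_K U\}$. Take $(L,y)\in\epi F^*$, i.e.\ $L\in\dom F^*$ and $y\in F^*(L)+K$. Since $F$ is proper, $\pm F^*(L)\in\mathcal{P}_p(Y)^\bullet$ (as noted after Proposition~\ref{pro_5hh}, the weak supremum of a nonempty set is partition-style), so in particular $U:=F^*(L)\in\mathcal{P}_p(Y)$ when $L\in\dom F^*$ — wait, one must be careful: $F^*(L)$ could in principle be $\{+\infty_Y\}$-free but one needs $U\in\mathcal{P}_p(Y)$, so I would take $U := F^*(L)$ itself, which lies in $\mathcal{P}_p(Y)$, and note $F^*(L)\preccurlyeq_K F^*(L)$ by reflexivity of $\preccurlyeq_K$ on $\mathcal{P}_p(Y)^\bullet$; also $y\in F^*(L)+K = U$ by Proposition~\ref{pro_5hh}(ii) and $U\preccurlyeq_K U$ meaning $U+K = U\cup(U+\inte K)$, so $(L,y)\in\{L\}\times(U+K)$. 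This forces me to instead pick $U$ so that $y\in U$: the right choice is to observe that $U:=F^*(L)$ gives $y\in U+K$, and then by Proposition~\ref{pro_5hh}(iv), $U\preccurlyeq_K V \Leftrightarrow V\subset U+K$, I can take $V = \{y\}$? No — $\{y\}$ need not be partition-style. The clean route is: $y\in F^*(L)+K$ means (by \eqref{epiF*} or directly) that $F^*(L)\preccurlyeq_K \widetilde U$ for the partition-style set $\widetilde U := \wsup(\{y\}-K)$ or more simply one uses that for $y\in F^*(L)+K$ the set $U_y := (\{y\}+K)$'s weak-supremum boundary works; I expect the cleanest formulation in the paper is: $(L,y)\in\epi F^*$ iff there is $U\in\mathcal{P}_p(Y)$ with $F^*(L)\preccurlyeq_K U$ and $y\in U$, and the existence of such $U$ given $y\in F^*(L)+K$ follows from Proposition~\ref{prop_1ab}(v) with $V=-\bd K$ (so $U\uplus V = U$), or from a direct decomposition argument using Proposition~\ref{pro_5hh}(iii). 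Conversely, if $(L,y)\in\Psi(\exepi F^*)$ then $F^*(L)\preccurlyeq_K U$ and $y\in U$; by Proposition~\ref{pro_5hh}(iv) this gives $U\subset F^*(L)+K$, hence $y\in F^*(L)+K$, i.e.\ $(L,y)\in\epi F^*$. This equivalence—matching "$y\in F^*(L)+K$" with "$\exists U$ partition-style, $F^*(L)\preccurlyeq_K U$, $y\in U$"—is the main obstacle, and it rests entirely on the partition-style machinery of Proposition~\ref{pro_5hh}.

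Items (ii)--(v) are immediate. For (ii): if $\mathscr{M}\subset\mathscr{N}$ then every $(L,U)\in\mathscr{M}$ lies in $\mathscr{N}$, so $\{L\}\times U\subset\Psi(\mathscr{N})$, and taking the union over $\mathscr{M}$ gives $\Psi(\mathscr{M})\subset\Psi(\mathscr{N})$. For (iii): if $\mathscr{M}\subset\mathscr{N}$, any element of $\mathscr{M}\nplus\mathscr{Q}$ has the form $(L_1+L_2, U_1\uplus U_2)$ with $(L_1,U_1)\in\mathscr{M}\subset\mathscr{N}$ and $(L_2,U_2)\in\mathscr{Q}$, hence lies in $\mathscr{N}\nplus\mathscr{Q}$. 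For (iv): unwinding the definition of $\nplus$, $\bigcup_{i}(\mathscr{M}_i\nplus\mathcal{N})$ consists of all $(L_1+L_2,U_1\uplus U_2)$ with $(L_1,U_1)\in\mathscr{M}_i$ for some $i$ and $(L_2,U_2)\in\mathcal{N}$; this is exactly the set of such pairs with $(L_1,U_1)\in\bigcup_i\mathscr{M}_i$, which is $(\bigcup_i\mathscr{M}_i)\nplus\mathcal{N}$. For (v): $\Psi(\bigcup_i\mathscr{M}_i) = \bigcup\{\{L\}\times U : (L,U)\in\bigcup_i\mathscr{M}_i\} = \bigcup_i\bigcup\{\{L\}\times U : (L,U)\in\mathscr{M}_i\} = \bigcup_i\Psi(\mathscr{M}_i)$, using only that union commutes with union. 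I would present (ii)--(v) in a single compact paragraph and devote the bulk of the proof to establishing the equivalence underlying (i).
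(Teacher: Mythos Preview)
Your proposal is correct and aligns with the paper's own treatment: the paper simply cites \cite[Proposition~3.1]{DL20} for (i) and dismisses (ii)--(v) as immediate from the definitions, which is exactly how you handle (ii)--(v). For (i), although your write-up is exploratory, you do identify the right mechanism: the equivalence ``$y\in F^*(L)+K$ iff $\exists\,U\in\mathcal{P}_p(Y)$ with $F^*(L)\preccurlyeq_K U$ and $y\in U$'' follows from Proposition~\ref{prop_1ab}(v) with $V=-\bd K$ (using $U\uplus(-\bd K)=U$) for the forward direction and Proposition~\ref{pro_5hh}(iv) for the converse, which is precisely the machinery the cited result rests on.
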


\begin{proof} 
 $\mathrm{(i)}$ is  \cite[Proposition 3.1]{DL20} while the others are easy from the definitions of $\Psi$ and  $ \nplus$-sum. 
\end{proof} 


\begin{remark}
\label{rem_2dd}
Noting that when $Y=\mathbb{R}$ and $K=\mathbb{R}_+$,  then $\mathcal{P}_p(Y)={\mathbb{R}}$ while the pre-order ``$\preccurlyeq_K$'' (see  \eqref{eq_6.33a})  ``$\uplus$-sum''    (see \eqref{newsum}) become the normal order ``$\le$'' and the normal sum ``$+$'' on the set of extended real numbers, respectively.
Hence, $\L(X,Y)\times \mathcal{P}_p(Y)=X^*\times {\mathbb{R}}$ and the $\boxplus$-sum collapses to  the usual Minkowski sum of two subsets in $X^*\times {\mathbb{R}}$.
Both $K$-epigraph and $K$-extended epigraph of  conjugate mappings  collapse to their  usual  epigraphs in the sense of  convex analysis. The mapping $\Psi$ (defined by \eqref{Psi})    then reduces to  the identical mapping of $X^*\times {\mathbb{R}}$. In  other words, if ${\mathscr{M}, \mathscr{N}}
\subset  X^*\times {\RR}$, one has
{$\Psi(\mathscr{M} \nplus \mathscr{N})=  \Psi (\mathscr{M} + \mathscr{N}) = \mathscr{M} + \mathscr{N}.$}
\end{remark}




\subsection{An Extended Open Mapping Theorem}

Let $\tilde X,\tilde Y$ be lcHtvs
and  let  $\mathcal{G}\colon \tilde X\rightrightarrows \tilde Y$ be a multifunction. The {\it domain } and the {\it graph} of $\mathcal{G}$ are, respectively, 
\begin{gather*}
\dom \mathcal{G}:=\{x\in \tilde X: \mathcal{G}(x)\ne \emptyset\},\ \ 
\gr \mathcal{G}:=\{(x,y)\in \tilde X\times \tilde Y: x\in\dom \mathcal{G} \textrm{ and }y\in \mathcal{G}(x)\}.
\end{gather*}
The multifunction  $\mathcal{G}$ is {\it convex} ({\it closed}, respectively) if $\gr \mathcal{G}$ is a convex (closed, respectively) subset of $\tilde X\times \tilde Y$ \cite[p.13]{Z02}.

\begin{lemma}
{\rm\cite[Lemma 1.3.1]{Z02}}
\label{lem_Zalinescu}
Let $\mathcal{G}\colon \tilde X\rightrightarrows \tilde Y$ be a closed and convex multifunction and $\tilde x_0\in \tilde X$. Assume that $\tilde X$ is  a first countable and complete space\footnote{Recall that a lcHtvs $\tilde X$ is {\it complete} if every  Cauchy net in $\tilde X$ is convergent.}. Then
$$\bigcap_{U\in\mathcal{N}(\tilde x_0)}\inte (\cl \mathcal{G}(U))\subset\bigcap_{U\in \mathcal{N}(\tilde x_0)} \inte \mathcal{G}(U),$$
where $\mathcal{N}(\tilde x_0)$ is the collection  of all neighborhoods of $\tilde x_0$ in $\tilde X$.
\end{lemma}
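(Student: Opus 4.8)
The plan is to run the classical successive-approximation (Baire-category-flavoured) argument that underlies the Robinson--Ursescu-type open mapping theorems; the content of the statement is the inclusion $\inte(\cl\mathcal{G}(U))\subset\inte\mathcal{G}(U)$ at points common to all such sets, since the reverse inclusion between the two intersections is immediate from $\mathcal{G}(U)\subset\cl\mathcal{G}(U)$. Fix $y_0$ in the left-hand intersection and an arbitrary $V\in\mathcal{N}(\tilde x_0)$; it suffices to find a neighborhood of $y_0$ contained in $\mathcal{G}(V)$. Since $\tilde X$ is first countable it carries a translation-invariant metric, and ``complete'' (every Cauchy net converges) then amounts to completeness of this metric. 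Choose a countable decreasing neighborhood base $(D_n)_{n\ge0}$ of $0_{\tilde X}$ consisting of balanced sets with $D_{n+1}+D_{n+1}\subset D_n$ for all $n$ and $D_0\subset V-\tilde x_0$, so that every series $\sum_{n\ge1}d_n$ with $d_n\in D_n$ converges to a point of $D_0$. Because $y_0\in\inte(\cl\mathcal{G}(\tilde x_0+D_n))$ for each $n$, choose a decreasing base $(E_n)_{n\ge1}$ of balanced neighborhoods of $0_{\tilde Y}$ with $y_0+E_n\subset\cl\mathcal{G}(\tilde x_0+D_n)$ and $E_{n+1}+E_{n+1}\subset E_n$.

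The heart of the argument is the iteration. Fix $y\in y_0+E_1$. Using $y\in\cl\mathcal{G}(\tilde x_0+D_1)$ one finds $(x_1,y_1)\in\gr\mathcal{G}$ with $x_1\in\tilde x_0+D_1$ and residual $y-\lambda_1 y_1\in E_2$ for a suitable weight $\lambda_1\in(0,1)$; expressing the residual again as a point covered by some $y_0+E_k\subset\cl\mathcal{G}(\tilde x_0+D_k)$, one repeats, producing $(x_n,y_n)\in\gr\mathcal{G}$ with $x_n\in\tilde x_0+D_n$ and weights $\lambda_n\in(0,1)$, $\sum_{n\ge1}\lambda_n=1$, calibrated to the geometric decay imposed by the $E_n$'s so that $y-\sum_{k\le n}\lambda_k y_k\to0$. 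Convexity of $\gr\mathcal{G}$ is then used to assemble the data: each renormalized finite convex combination of the $(x_n,y_n)$ lies in $\gr\mathcal{G}$, its $\tilde X$-component is $\tilde x_0+\sum_{k\le n}\lambda_k d_k$ with $d_k\in D_k$ (balancedness), which is Cauchy and hence converges to some $x\in\tilde x_0+D_0\subset V$ by completeness, and its $\tilde Y$-component converges to $y$. Closedness of $\gr\mathcal{G}$ gives $(x,y)\in\gr\mathcal{G}$, i.e. $y\in\mathcal{G}(x)\subset\mathcal{G}(V)$.

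Since $y\in y_0+E_1$ was arbitrary, $y_0+E_1\subset\mathcal{G}(V)$, hence $y_0\in\inte\mathcal{G}(V)$; and as $V\in\mathcal{N}(\tilde x_0)$ was arbitrary, $y_0\in\bigcap_{U\in\mathcal{N}(\tilde x_0)}\inte\mathcal{G}(U)$, which is the claim. I expect the only delicate point to be the bookkeeping in the iteration: the residuals are combined \emph{additively}, while convexity of $\gr\mathcal{G}$ yields only convex combinations, so the weights $\lambda_n$ and the neighborhoods $D_n,E_n$ must be tuned together to guarantee simultaneously that the $\tilde Y$-residuals vanish and that the $\tilde X$-iterates stay inside $V$; once the metrization afforded by first countability and the completeness hypothesis are in place, the passage to the limit through the closed graph is routine. (Since the statement is quoted verbatim from \cite[Lemma 1.3.1]{Z02}, one may alternatively just invoke that reference.)
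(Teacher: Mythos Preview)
The paper does not prove this lemma at all: it is stated with a direct citation to \cite[Lemma~1.3.1]{Z02} and used as a black box in the proof of Theorem~\ref{lem_epiclosed}. Your closing parenthetical remark---that one may simply invoke the reference---is precisely what the paper does.

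Regarding the sketch itself, the overall architecture (countable balanced neighborhood base from first countability, successive approximation with geometrically shrinking neighborhoods, Cauchy convergence of the $\tilde X$-iterates by completeness, passage to the limit through the closed convex graph) matches Z\u{a}linescu's argument. The iteration step as you wrote it is, however, a bit loose: the formulation ``residual $y-\lambda_1 y_1\in E_2$ for a suitable weight $\lambda_1\in(0,1)$'' does not directly match what the closure hypothesis $y\in\cl\mathcal{G}(\tilde x_0+D_1)$ delivers (it gives some $y_1\in\mathcal{G}(x_1)$ close to $y$, not $\lambda_1 y_1$ close to $y$), and the precise way convexity is used to recenter the residual near $y_0$ so that the hypothesis $y_0+E_n\subset\cl\mathcal{G}(\tilde x_0+D_n)$ can be reapplied needs more care than your description indicates. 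You correctly flag this bookkeeping as the delicate point; since the paper does not attempt the proof either, this is not a defect relative to the paper.
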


We are  now turning   back to the lcHtvs $X, Y, Z$ and the non-empty convex cones $K, S$  as in the beginning of this section. Denote 
 by $\pi$ the canonical projection from $X\times Z$ to $Z$, i.e., $\pi (x,z) := z$ for all $(x,z) \in X \times Z$. We need  an extended  open mapping theorem which will be used in the proof of Theorem   \ref{cor_nonasymptotic_representing_epi_3} below, whose proof is rather technical (and based on Lemma \ref{lem_Zalinescu}) and will left to the Appendix.

\begin{theorem}[Extended open mapping theorem]
\label{lem_epiclosed}
Let  $\Phi \colon X\times Z\to Y^\infty$ be a $K$-convex and $K$-epi closed mapping,  $x_0\in X$ with $(x_0,0_Z)\in \dom \Phi$, and 
let   $Z_0=\lin (\pi(\dom\Phi))$. Let further,  $U_0$    and $V_0$  be  neighborhoods  of     $x_0$ and $\Phi (x_0,0_Z)$ in $X$ and \ $Y$, respectively.
 Assume that $X, Y$  are the   first countable  and complete spaces,   $Z_0$ is a barreled space\footnote{A lcHtvs $\tilde X$ is {\it barreled} if every absorbing, convex and closed subset of $\tilde X$ is a neighborhood of the origin of $\tilde X$ \cite[p.9]{Z02}}, and that $0_Z\in \icr (\pi (\dom \Phi))$. Then 
\begin{equation}
\label{eq_2.8d}
0_Z\in \inte_{Z_0} \{z\in Z: \Phi (x,z)\leqq_K y \textrm{ for some } x\in U_0 \textrm{ and } y\in V_0\}.
\end{equation} 
\end{theorem}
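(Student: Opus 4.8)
The strategy is to reduce the statement to an application of Zălinescu's open mapping result (Lemma \ref{lem_Zalinescu}) applied to a carefully chosen closed convex multifunction whose domain sits inside the barreled space $Z_0$. First I would introduce the multifunction $\mathcal{G}\colon Z_0 \rightrightarrows X \times Y$ defined by
\begin{equation*}
\mathcal{G}(z) := \{(x,y) \in X\times Y : \Phi(x,z) \leqq_K y\},
\end{equation*}
i.e., the fibre of $\epi_K \Phi$ over $z$, restricted to $z \in Z_0$. The hypotheses that $\Phi$ is $K$-convex and $K$-epi closed translate directly into $\gr \mathcal{G}$ being convex and closed in $Z_0 \times (X\times Y)$ (using that $Z_0$ is a topological subspace of $Z$ and that closedness of $\epi_K\Phi$ in $X\times Z\times Y$ passes to the slice). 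Since $X$ and $Y$ are first countable and complete, the target product space $X\times Y$ plays the role of "$\tilde Y$" and $Z_0$ the role of "$\tilde X$"; but Lemma \ref{lem_Zalinescu} requires the \emph{domain} space to be first countable and complete, so I actually want to set this up with $\mathcal{G}^{-1}\colon X\times Y \rightrightarrows Z_0$, $\mathcal{G}^{-1}(x,y) = \{z \in Z_0 : \Phi(x,z)\leqq_K y\}$, whose graph is the transpose and hence still closed and convex, and whose domain space $X\times Y$ is now first countable and complete. The barreledness of $Z_0$ will be what guarantees that $\mathcal{G}^{-1}$ has the right ``openness'' behaviour at the relevant point — this is the analogue of Robinson–Ursescu that makes the intrinsic-core hypothesis bite.

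Next I would verify the core-point hypothesis. The point $(x_0, \Phi(x_0,0_Z)) \in \dom \mathcal{G}^{-1}$ because $\Phi(x_0,0_Z) \leqq_K \Phi(x_0,0_Z)$ trivially (here I use $0_Y \in K$). I must show that $0_Z$ lies in the \emph{algebraic interior} of $\mathcal{G}^{-1}(U_0\times V_0) := \bigcup_{(x,y)\in U_0\times V_0} \mathcal{G}^{-1}(x,y)$ relative to $Z_0$, or more precisely that $0_Z$ is in the interior (in the $Z_0$-topology) of the closure of $\mathcal{G}^{-1}(W)$ for every neighbourhood $W$ of $(x_0,\Phi(x_0,0_Z))$. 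The hypothesis $0_Z \in \icr(\pi(\dom\Phi))$ says exactly that along every direction in $\aff(\pi(\dom\Phi)) = Z_0$ one can move a little from $0_Z$ and stay in $\pi(\dom\Phi)$; combined with $K$-convexity of $\Phi$ and continuity of the ambient linear/topological structure, this forces $0_Z$ into the algebraic interior of $\pi(\dom\Phi)$ relative to $Z_0$, and then a standard barreledness argument (absorbing + convex + one checks closedness or passes to the closure) upgrades algebraic interior to topological interior in $Z_0$. The neighbourhoods $U_0, V_0$ enter because for $z$ near $0_Z$ in $\pi(\dom\Phi)$ there is $x$ near $x_0$ with $(x,z) \in \dom\Phi$ — here one uses $K$-convexity to interpolate between $(x_0,0_Z)$ and a point $(x',z')$ witnessing the core condition, keeping $x$ inside $U_0$ by shrinking the step, and then any $y \geqq_K \Phi(x,z)$ with $y$ close to $\Phi(x_0,0_Z)$ can be found, again by continuity of addition and the interpolation, so that $y \in V_0$.

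Finally, having checked the hypotheses, Lemma \ref{lem_Zalinescu} yields
\begin{equation*}
\bigcap_{W \in \mathcal{N}(x_0,\Phi(x_0,0_Z))} \inte_{Z_0}\bigl(\cl \mathcal{G}^{-1}(W)\bigr) \subset \bigcap_{W} \inte_{Z_0}\, \mathcal{G}^{-1}(W),
\end{equation*}
and since $0_Z$ belongs to the left-hand side (by the core-point analysis of the previous paragraph, applied to the cofinal family of neighbourhoods $W \subset U_0\times V_0$), it belongs to $\inte_{Z_0}\mathcal{G}^{-1}(U_0\times V_0)$, which is precisely the set on the right-hand side of \eqref{eq_2.8d}. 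The main obstacle I anticipate is the careful bookkeeping in the core-point step: one must show $0_Z$ lies in the interior \emph{of the closure} of $\mathcal{G}^{-1}(W)$ for arbitrarily small $W$, not just of $\mathcal{G}^{-1}(Z_0)$ globally, and this requires simultaneously controlling the $x$-component inside $U_0$ and the $y$-component inside $V_0$ while exploiting $\icr$ only in the $z$-direction — the interpolation via $K$-convexity of $\Phi$ linking $(x_0,0_Z,\Phi(x_0,0_Z))$ to the core-witnessing points, together with invoking barreledness of $Z_0$ to convert the resulting absorbing convex set into a $Z_0$-neighbourhood, is the technical heart of the argument (and is presumably why the authors defer it to the Appendix).
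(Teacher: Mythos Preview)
Your proposal is correct and follows essentially the same route as the paper: the authors define precisely your $\mathcal{G}^{-1}\colon X\times Y\rightrightarrows Z_0$ (they call it $\mathcal{G}$), show that for every convex neighbourhood $U\times V$ of $(x_0,\Phi(x_0,0_Z))$ the image $\mathcal{G}(U\times V)$ is convex and absorbing in $Z_0$ via the $K$-convex interpolation you describe (choose $\mu\in(0,1)$ small so that both $x_0+\mu(x-x_0)\in U$ and the convex combination of $\Phi$-values lands in $V$), invoke barreledness to get $0_Z\in\inte_{Z_0}\cl\mathcal{G}(U\times V)$, and then apply Lemma~\ref{lem_Zalinescu} exactly as you set it up. The only cosmetic difference is that the paper phrases the core-point step as ``$\mathcal{G}(U\times V)$ is absorbing'' rather than ``$0_Z$ is in the algebraic interior,'' which are of course the same thing here.
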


\begin{proof} (see Appendix A)  \end{proof}



\section{Conjugate  of     Perturbation Mapping  and   Generalized Vector Inequalities  }
\label{section4}


Let $X,Y$, $Z$  be  lcHtvs and  $K, S$ be  non-empty convex cones in $Y$ and $Z$, respectively, with  $\inte K\ne \emptyset$,   
as in Section \ref{section2}, and let    $\pi :X\times Z \to Z$ be the   canonical   projection from $X\times Z$ to $Z$, i.e., $\pi (x, z) = z$ for all $(x,z) \in X \times Z$.  
  
   
 \subsection{Epigraphs of Conjugate of Perturbation Mappings}   
 Consider a {\it perturbation  mapping  $\Phi\colon X\times Z\to Y^\infty$ }
with  its    conjugate mapping $\Phi^\ast : \L(X,Y)\times \L(Z,Y)  \rightrightarrows      Y^\bullet$.  
The conjugate mapping of $\Phi(.,0_Z)\colon X\to Y^\infty$ will be denoted by   $\Phi (.,0_Z)^\ast$  (instead of $(\Phi (.,0_Z))^\ast$). 
 
  In this section, we will  establish   variant    representations of   $\epi \Phi (.,0_Z)^\ast$ in terms of  $\epi \Phi^\ast (.,T)$ with   $T \in  {\rm proj}_{\L(Z,Y)}\dom \Phi^\ast$, which pay the way for the characterizations of general vector inequalities, and also for the construction of Lagrange and Fenchel-Lagrange dual problems.  
  
     For the sake of  convenience,   the following assumptions/conventions which will be used in the rest of the paper.

$\bullet$ Assume that {\it   $\Phi$ is a proper $K$-convex mapping}  (except for the Proposition \ref{pro_incluepi}), 

$\bullet$ Set  
 \begin{equation} \label{Z0}
 Z_0:=\lin (\pi(\dom\Phi)),   
 \end{equation} 
 and assume   that  $0_Z\in \pi( \dom \Phi)$, which yields    $Z_0=\aff (\pi (\dom \Phi))$.

\indent $\bullet$ Recall  that  
for $z_1, z_2 \in Z$,  $z_{1}\leqq _{S}z_{2}$  means that  $ z_{1}-z_{2}\in -S$, 

\indent $\bullet$ We also enlarge $Z$ by attaching a greatest element $+\infty _{Z}$ and a
smallest element $-\infty _{Z}$, which do not belong to $Z$, and define $%
Z^{\bullet }:=Z\cup \{-\infty _{Z},+\infty _{Z}\}$ and $Z^\infty:=Z\cup \{+\infty _{Z}\}$. In $Z^{\bullet }$ we
adopt the same  conventions as in \eqref{2.4}.

\indent $\bullet$ Let us set 
\begin{eqnarray}
 \L_{\Phi} \!\!\!   &:=& \!\!\!   \{T\in \L(Z,Y): \exists   L\in \L(X,Y)\   \textrm{s.t.} \  (L,T)\in \dom \Phi^\ast \}= 
{\rm proj}_{\L(Z,Y)}\dom \Phi^\ast,    \label{Lphi} \\
 \L_{\Phi}^+\!\!\!     &:=&\!\!\!     \L_\Phi\cap\L_+(S,K), \label{Lphi+}\\
\mathcal{M} \!\!\!    &:=&\!\!\!     \bigcup_{T\in \L_\Phi}\epi \Phi^*(.,T),\qquad\mathcal{M}_+:=\bigcup_{T\in \L_\Phi^+}\epi \Phi^*(.,T).
\label{eq:14_nwewewE}
\end{eqnarray}



\begin{proposition}
\label{pro_incluepi}
It holds\ \ 
{$\epi \Phi(.,0_Z)^\ast \supset \mathcal{M}\supset\mathcal{M}_+.$}
\end{proposition}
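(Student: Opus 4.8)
The plan is to prove the two inclusions separately, the second being essentially trivial and the first reducing to the characterization \eqref{epiF*} of the epigraph of a conjugate mapping together with the definition of the composite function $T \circ 0_Z$. The inclusion $\mathcal{M} \supset \mathcal{M}_+$ is immediate from the definitions \eqref{Lphi}, \eqref{Lphi+}, \eqref{eq:14_nwewewE}: since $\L_\Phi^+ = \L_\Phi \cap \L_+(S,K) \subset \L_\Phi$, the union defining $\mathcal{M}_+$ is taken over a subset of the index set defining $\mathcal{M}$, hence $\mathcal{M}_+ \subset \mathcal{M}$.

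For the inclusion $\epi \Phi(.,0_Z)^\ast \supset \mathcal{M}$, I would fix $T \in \L_\Phi$ and show $\epi \Phi^\ast(.,T) \subset \epi \Phi(.,0_Z)^\ast$. Take $(L,y) \in \epi \Phi^\ast(.,T)$; since $\Phi^\ast(.,T)$ is the conjugate of the mapping $(x,z) \mapsto \Phi(x,z)$ evaluated at $(L,T)$, applying the characterization analogous to \eqref{epiF*} (with $X$ replaced by $X \times Z$ and $L$ by the pair $(L,T)$) gives
\begin{equation*}
\Phi(x,z) - L(x) - T(z) + y \notin -\inte K, \qquad \forall (x,z) \in X \times Z.
\end{equation*}
Now specialize to $z = 0_Z$. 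Since $T$ is linear, $T(0_Z) = 0_Y$, so the above becomes $\Phi(x,0_Z) - L(x) + y \notin -\inte K$ for all $x \in X$. By \eqref{epiF*} applied to the mapping $F = \Phi(.,0_Z)\colon X \to Y^\infty$, this is precisely the statement that $(L,y) \in \epi \Phi(.,0_Z)^\ast$. Hence $\epi \Phi^\ast(.,T) \subset \epi \Phi(.,0_Z)^\ast$ for every $T \in \L_\Phi$, and taking the union over $T \in \L_\Phi$ yields $\mathcal{M} \subset \epi \Phi(.,0_Z)^\ast$.

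I do not anticipate a genuine obstacle here; the only point requiring a little care is the bookkeeping in passing from the conjugate of $\Phi$ as a mapping on the product space $X \times Z$ to the conjugate of its partial restriction $\Phi(.,0_Z)$. One should confirm that $\Phi(.,0_Z)$ is proper so that \eqref{epiF*} applies — this follows from the standing assumption $0_Z \in \pi(\dom \Phi)$ (which guarantees $\dom \Phi(.,0_Z) \neq \emptyset$) together with the properness of $\Phi$ (which rules out the value $-\infty_Y$). Note also that the proposition is stated without assuming $K$-convexity of $\Phi$ (it is explicitly exempted from that standing hypothesis), and indeed the argument above uses only \eqref{epiF*}, which holds for any proper mapping, so no convexity is needed.
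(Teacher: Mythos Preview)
Your proof is correct and follows essentially the same route as the paper's: both argue the second inclusion is trivial from $\L_\Phi^+\subset\L_\Phi$, and both obtain the first by applying \eqref{epiF*} on the product space $X\times Z$ to a point $(L,y)\in\epi\Phi^\ast(.,T)$, specializing to $z=0_Z$, and then applying \eqref{epiF*} again to $\Phi(.,0_Z)$. Your additional remarks on properness and the absence of convexity are accurate and match the paper's explicit exemption of this proposition from the $K$-convexity hypothesis.
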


\begin{proof}  The second  inclusion   $\mathcal{M}\supset\mathcal{M}_+$   
is trivial. 
So, only the first one  needs to prove. 
Take  $(L,y)\in \bigcup_{T\in {\L_\Phi}}\epi \Phi^*(.,T)$.
 Then, there is  $T\in \L_\Phi$ such that $(L,y)\in \epi \Phi^*(.,T)$, or equivalently, 
$(L,T,y)\in  \epi \Phi^\ast$, and then, by   \eqref{epiF*},    one gets 
$\Phi(x,z)-L(x)-T(z)+y\notin -\inte K$ for all $(x,z)\in X\times Z.$ 
In particular, taking  $z = 0_Z$, one has 
$\Phi(x,0_Z)-L(x) +y\notin -\inte K$ for all $x\in X, $
which, again by  \eqref{epiF*},    $(L,y)\in {\epi\, } \Phi (., 0_Z)^\ast$ and the proof is complete. \end{proof}


Consider the following conditions: 

\begin{tabular}{c | c}
$(C_{0})$ &  
\begin{minipage}{0.8\textwidth}
$\exists \tilde x \in X :  (\tilde x,0_Z)\in \dom \Phi  \textrm{ and }  \Phi(\tilde x,z)\leqq_K\Phi (\tilde x,0_Z), \;\forall z\in-S $, 
\end{minipage}
\end{tabular}

\begin{tabular}{c | c}
$(C_1)$ &  
\begin{minipage}{0.8\textwidth}
$\forall L\in \L(X,Y)$, $\exists y_L\in Y$,  $\exists V_L \in \mathcal{N}(0_Z)$   such that \\
$\forall z\in  V_L\cap Z_0,\; \exists x\in X: \Phi(x,z)-L(x)\leqq_K y_L, $
\end{minipage}
\end{tabular}

\begin{tabular}{c | c}
$(C_2)$ &  
\begin{minipage}{0.8\textwidth}
$\exists (\widehat x,\widehat y)\in X\times Y$, $\exists \widehat V \in \mathcal{N}(0_Z) $ such that $ \Phi(\widehat x,z)\leqq_K \widehat y,\; \forall z \in \widehat V\cap Z_0,$
\end{minipage}
\end{tabular}

\begin{tabular}{c | c}
$(C_3)$ &  
\begin{minipage}{0.8\textwidth}
$\exists \widehat x\in X$   such that  $\Phi_{| Z_0}(\widehat x,.)$ is continuous  at $0_Z$,
\end{minipage}
\end{tabular}

\begin{tabular}{c | c}
$(C_4)$ &  
\begin{minipage}{0.7\textwidth}
 $\dim Z_0<\infty$ and $0_Z\in \ri (\pi(\dom \Phi))$,
\end{minipage}
\end{tabular}

\begin{tabular}{c | c}
$(C_5)$ &  
\begin{minipage}{0.80\textwidth}
$X,Y$ are complete and first countable space, $Z_0$ is a barreled space,\\ 
$\Phi$ is $K$-epi closed and $0_Z\in\icr (\pi(\dom \Phi))$, 
\end{minipage}
\end{tabular}

\begin{tabular}{c | c}
$(C_6)$ &  
\begin{minipage}{0.7\textwidth}
  $\Phi(x,0_Z)\leqq_K \Phi(x,z)$,   $\forall (x,z)\in \dom\Phi$, \\{\rm and}    $\inte (\pi(\dom \Phi))\ne\emptyset$, 
\end{minipage}
\end{tabular}

\begin{tabular}{c | c}
$(C_7)$ &  
\begin{minipage}{0.7\textwidth}
$0_Z\in \pi(\dom \Phi) - \inte S$ \\    
  $\Phi(x,0_Z)\leqq_K \Phi(x,z)$ whenever  $ (x,z)\in \dom\Phi$ and  $z\geqq_S 0_Z$.
\end{minipage}
\end{tabular}



\begin{remark}\label{rem41} It is worth noticing that  the conditions $(C_1)$-$(C_5)$ extend the ones  proposed in \cite[Theorem 2.7.1]{Z02}  for  the case where  $Y=\mathbb{R}$.    
     Concretely, when $Y=\mathbb{R}$ and $\L(X,Y)$ {replaced by $\{0_{X^*}\}$} the condition $(C_1)$ collapses to  the  condition  \cite[Theorem 2.7.1(i)]{Z02},  the conditions $(C_3)$  and $(C_4)$  are nothing else but the conditions \cite[Theorem 2.7.1(iii)]{Z02} and  \cite[Theorem 2.7.1(viii)]{Z02}, respectively. The conditions    $(C_2)$ and $(C_5)$ 
       generalize \cite[Theorem 2.7.1(ii)]{Z02} and   \cite[Theorem 2.7.1(vi)]{Z02}   to vector problems, respectively. 
\end{remark}


\begin{theorem}[$1^{st}$ Representation of $\epi \Phi(.,0_Z)^*$]
\label{thm_nonasymptotic_representing_epi_2} 
 If $(C_1)$ holds  then 
\begin{equation}
\label{eq_18d}
{\epi \Phi(.,0_Z)^*=\mathcal{M}.}
\end{equation}
If,   in addition that,   $(C_0)$ holds,
 then 
\begin{align}\label{eq_18ddlbis}
{\epi \Phi(.,0_Z)^*=\mathcal{M}
=\mathcal{M}_+. }
\end{align}
\end{theorem}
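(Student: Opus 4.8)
The plan is to establish the equality $\epi\Phi(.,0_Z)^* = \M$ first, and then upgrade it to $\epi\Phi(.,0_Z)^* = \M = \M_+$ under the additional assumption $(C_0)$. By Proposition \ref{pro_incluepi} we already have the inclusions $\epi\Phi(.,0_Z)^* \supset \M \supset \M_+$, so for the first claim only the reverse inclusion $\epi\Phi(.,0_Z)^* \subset \M$ must be proved, and for the second claim only $\M \subset \M_+$ remains. I would carry these out as the two main steps.

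\textbf{Step 1 (reverse inclusion $\epi\Phi(.,0_Z)^* \subset \M$ under $(C_1)$).} Take $(L,y)\in\epi\Phi(.,0_Z)^*$; by \eqref{epiF*} this means $\Phi(x,0_Z)-L(x)+y\notin-\inte K$ for all $x\in X$. The goal is to produce some $T\in\L_\Phi$ with $(L,T,y)\in\epi\Phi^*$, i.e.\ $\Phi(x,z)-L(x)-T(z)+y\notin-\inte K$ for all $(x,z)\in X\times Z$; then $(L,y)\in\epi\Phi^*(.,T)\subset\M$. The natural route is a separation/sandwich argument on the space $X\times Z$: consider the $K$-convex mapping $(x,z)\mapsto\Phi(x,z)-L(x)+y$ and the ``slice'' constraint $z=0_Z$, and use condition $(C_1)$ — which, for this particular $L$, guarantees that near $0_Z$ (within $Z_0$) the perturbed value function $p(z):=\winf\{\Phi(x,z)-L(x):x\in X\}$ stays $\leqq_K$-bounded above by $y_L$ on a neighborhood $V_L\cap Z_0$. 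This is exactly the kind of ``local boundedness near the reference perturbation'' hypothesis that, in the scalar case of \cite[Theorem 2.7.1]{Z02}, guarantees subdifferentiability of the value function at $0_Z$. Here the vector analogue says there is a continuous linear $T:Z_0\to Y$ that is a ``$K$-subgradient'' of $p$ at $0_Z$; extending $T$ to all of $Z$ (using that $Z_0=\aff(\pi(\dom\Phi))$ and that $p$ is $+\infty_Y$ outside $\pi(\dom\Phi)$, so any linear extension works off $Z_0$), one gets $p(z)\succcurlyeq$-type inequality $p(z)-T(z)\succcurlyeq_K p(0_Z)-T(0_Z)$ translating into $\Phi(x,z)-L(x)-T(z)+y\notin-\inte K$ for all $(x,z)$. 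Since $L\in\dom\Phi^*(.,T)$ would then hold (the relevant weak-supremum set is not $\{+\infty_Y\}$, as witnessed by the inequality above), we get $T\in\L_\Phi$ and $(L,y)\in\M$. The technical machinery here is the WS-sum / extended-epigraph calculus of Section 2 together with a vector Hahn–Banach–style argument; I expect the authors phrase it via $\exepi$ and $\nplus$ and the relation $\epi F^*=\Psi(\exepi F^*)$ from Proposition \ref{rel_epi}(i), reducing everything to a decomposition of $Y$ as in Proposition \ref{pro_decomp}(v).

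\textbf{Step 2 (the inclusion $\M\subset\M_+$ under $(C_0)$).} Here one must show that every $T\in\L_\Phi$ appearing in the union defining $\M$ can be replaced by some $T'\in\L_\Phi^+=\L_\Phi\cap\L_+(S,K)$ without shrinking the union, i.e.\ $\epi\Phi^*(.,T)\subset\bigcup_{T'\in\L_\Phi^+}\epi\Phi^*(.,T')$. Condition $(C_0)$ supplies a point $\tilde x$ with $(\tilde x,0_Z)\in\dom\Phi$ and $\Phi(\tilde x,z)\leqq_K\Phi(\tilde x,0_Z)$ for all $z\in-S$. The idea is: given $(L,y)\in\epi\Phi^*(.,T)$, so $\Phi(x,z)-L(x)-T(z)+y\notin-\inte K$ for all $(x,z)$, test this at $x=\tilde x$ and $z\in-S$; combined with the monotonicity from $(C_0)$ one deduces $-T(z)+(\text{something in }-K)\notin-\inte K$ for all $z\in-S$, which via \eqref{int-plus} forces $-T(z)\notin-\inte K$ for $z\in-S$, i.e.\ $T(S)\cap(-\inte K)\supset$ is avoided — giving at least $T\in\L^w_+(S,K)$. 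Pushing this further (again using $(C_0)$ to absorb the $z$-perturbation into the constraint cone, as in the scalar Lagrangian argument) shows one may take $T$ itself to lie in $\L_+(S,K)$, hence $T\in\L_\Phi^+$ and $(L,y)\in\M_+$. Combined with Proposition \ref{pro_incluepi} this yields $\epi\Phi(.,0_Z)^*=\M=\M_+$.

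\textbf{Main obstacle.} The delicate part is Step 1: producing the operator $T\in\L(Z,Y)$ from condition $(C_1)$ — a genuinely vector-valued ``subdifferentiability of the perturbation map'' statement, where the scalar Hahn–Banach separation must be replaced by a selection/separation argument respecting the cone $K$ and its interior. The interplay of $\inte K\neq\emptyset$, the decomposition $Y=(M-\inte K)\cup\wsup M\cup(\wsup M+\inte K)$, and the restriction to the barreled/first-countable setting hidden in the other conditions $(C_i)$ is where the real work sits; condition $(C_1)$ is precisely tailored so that this step goes through, and verifying that $(C_1)$ suffices (rather than one of the stronger $(C_2)$–$(C_5)$) is the crux. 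Step 2, by contrast, is a comparatively routine Lagrangian manipulation once $(C_0)$ is in hand.
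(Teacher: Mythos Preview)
Your high–level plan (use Proposition \ref{pro_incluepi} for one inclusion, then prove $\epi\Phi(.,0_Z)^*\subset\M$, then strengthen to $\M_+$ under $(C_0)$) matches the paper. But both steps, as you describe them, miss the actual mechanism.

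\textbf{Step 1.} You expect a ``vector Hahn--Banach / $K$-subgradient of the value function'' argument dressed in the $\exepi$/$\nplus$ calculus. The paper does \emph{not} do that here. It works by \emph{scalar} separation in $Y\times Z_0$: for $(\bar L,\bar y)\in\epi\Phi(.,0_Z)^*$ one sets
\[
\Delta_{\bar L}:=\bigcup_{(x,z)\in\dom\Phi}\Big[\big(\bar L(x)-\Phi(x,z)-K\big)\times\{z\}\Big]\subset Y\times Z_0,
\]
uses $(C_1)$ to get $\inte_{Y\times Z_0}\Delta_{\bar L}\neq\emptyset$, checks $(\bar y,0_Z)\notin\inte_{Y\times Z_0}\Delta_{\bar L}$, separates by $(y_0^*,z_0^*)\in Y^*\times Z_0^*$, shows $y_0^*<0$ on $\inte K$, picks $k_0\in\inte K$ with $y_0^*(k_0)=-1$, and \emph{defines} the required operator as the rank-one map
\[
\bar T(z):=-\bar z^*(z)\,k_0,
\]
with $\bar z^*\in Z^*$ a Hahn--Banach extension of $z_0^*$. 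The extended-epigraph machinery plays no role in this theorem; the operator $T$ is produced from a scalar functional, not from a vector subdifferential.

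\textbf{Step 2.} Here you have a real gap. You propose to take an \emph{arbitrary} $T\in\L_\Phi$ with $(L,y)\in\epi\Phi^*(.,T)$, test at $x=\tilde x$, $z\in -S$, and use $(C_0)$ to force $T\in\L_+(S,K)$. Your own computation only yields $T(S)\cap(-\inte K)=\emptyset$, i.e.\ $T\in\L^w_+(S,K)$; the phrase ``pushing this further'' hides exactly the missing step, and in general an arbitrary $T$ appearing in $\M$ need \emph{not} lie in $\L_+(S,K)$ nor be replaceable by one via this argument. The paper avoids this entirely: it does \emph{not} prove $\M\subset\M_+$ abstractly. Instead it shows that, under $(C_0)$, the \emph{specific} rank-one $\bar T$ built in Step~1 already lies in $\L_+(S,K)$: testing the separation inequality at $(\bar L(\tilde x)-\Phi(\tilde x,0_Z),-\nu s)\in\Delta_{\bar L}$ and letting $\nu\to\infty$ gives $z_0^*\in -S^+$, whence $\bar T(s)=-z_0^*(s)\,k_0\in\mathbb{R}_+k_0\subset K$ for every $s\in S$. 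This directly yields $\epi\Phi(.,0_Z)^*\subset\M_+$, and with Proposition \ref{pro_incluepi} the chain of equalities follows. The coupling between the two steps --- that $(C_0)$ acts on the separating functional from Step~1, not on an arbitrary $T$ --- is the idea you are missing.
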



\begin{proof}
Taking  Proposition \ref{pro_incluepi} into account, to prove \eqref{eq_18d}, it suffices to show  that  
\begin{equation} \label{21nw}
{\epi \Phi(.,0_Z)^*\subset \mathcal{M}}. \end{equation}
For this, take {$(\bar L,\bar y)\in \epi\Phi(.,0_Z)^*$}. Then   by   \eqref{epiF*}, 
\begin{equation}
\label{eq_31bbb} 
	\bar y\notin \bar L(x)-\Phi(x,0_Z)-\inte K,\quad\forall x\in X.
\end{equation} 
 We will show that there exists $\bar T\in\L_\Phi$ such that $(\bar L,\bar y)\in \epi \Phi^*(., \bar T)$. 
   Set
$$
	\Delta_{\bar L}:=\bigcup_{(x,z)\in \dom \Phi}
		\Bigg( \Big(\bar L(x)-\Phi(x,z)-K\Big)\times \{z\} \Bigg).
$$
It is clear that $ \Delta_{\bar L}  \subset Y \times Z_0$ and  it is easy to  check that, as  $\Phi$ is convex,    $\Delta_{\bar L} $  is a convex subset of  $Y \times Z_0$. The proof of \eqref{21nw}   is arranged  in three  steps: 

 {\it Step 1.}  We firstly  prove that    $\inte_{Y\times Z_0}\Delta_{\bar L}\ne\emptyset$ and that  $(\bar y,0_{Z})\notin \inte_{Y\times Z_0} \Delta _{\bar L}$.

$\bullet$  {\it  The proof of  $\inte_{Y\times Z_0}\Delta_{\bar L} \ne\emptyset$.} Indeed, according to  $(C_1)$, there exist $y_{\bar L}\in Y$ and the open neighborhood $V_{\bar L}$ of $0_Z$ such that 
\begin{align}
&\forall z\in V_{\bar L} \cap Z_0,\; \exists x\in X: \Phi(x,z)-\bar L(x)\leqq_K  y_{\bar L} \notag\\
\Longrightarrow\; & \forall z\in V_{\bar L}\cap Z_0,\; \exists x\in X: -y_{\bar L}\in {\bar L}(x)-\Phi(x,z)-K\notag\\
\Longrightarrow\; & \forall (z,k')\in (V_{\bar L}\cap Z_0)\times \inte K,\; \exists x\in X: -y_{\bar L}-k'\in {\bar L}(x)-\Phi(x,z)-K\notag\\
\Longrightarrow\; & \forall (z,k')\in (V_{\bar L}\cap Z_0)\times \inte K,\; (-y_{\bar L} -k',z)\in \Delta_{\bar L}\notag\\
\Longrightarrow\; & (-y_{\bar L}-\inte K )\times (V_{\bar L}\cap Z_0)\subset \Delta_{\bar L}\notag\\
\Longrightarrow\; & (-y_{\bar L}-\inte K )\times (V_{\bar L}\cap Z_0)\subset \inte_{Y\times Z_0}\Delta_{\bar L}.\label{10d}
\end{align}
 So $\inte_{Y\times Z_0}\Delta_{\bar L}\ne\emptyset$ (as $(V_{\bar L}\cap Z_0)\ni 0_Z$ and $\inte K\ne\emptyset$).

$\bullet$ {\it  The proof of  $(\bar y,0_{Z})\notin \inte_{Y\times Z_0} \Delta _{\bar L}$. }
 Assume on  the contrary that $(\bar y,0_{Z})\in \inte_{Y\times Z_0}\Delta _{\bar L}$. 
Then, there exists  a neighborhood $U\times V$ of $(0_{Y},0_{Z})$ such that $[(\bar y+U)\times V]\cap (Y\times Z_0)\subset\Delta _{\bar L}$.  If we take $\bar{k}\in U\cap \inte K$ then  one gets $(\bar y+\bar{k},0_{Z})\in \Delta _{\bar L}$. 
This yields the existence of $(\bar x, 0_Z) \in \dom \Phi$  such that $\bar y+\bar{k}\in {\bar L}(\bar{x})-\Phi(\bar x, 0_Z)-K$,  leading to 
$\bar y\in \bar L(\bar x)-\Phi(\bar x,0_Z)-\inte K,$
 which contradicts \eqref{eq_31bbb}. Thus, $(\bar y,0_{Z})\notin \inte_{Y\times Z_0} \Delta _{\bar L}$.

 {\it Step 2.}    As  $(\bar y,0_{Z})\notin \inte_{Y\times Z_0} \Delta _{\bar L}$, apply   the convex separation theorem (\cite[Theorem 3.4]{Rudin91})  to the point (singleton) $(\bar y,0_{Z})$  and the convex set $\Delta_{\bar L}$ in the space $Y \times Z_0$,  one gets $(y^*_0, z^*_0)\in Y^{*}\times Z^*_0$ such that
\begin{equation}  \label{eq_336d}
	y^*_0(\bar y) < y^*_0(y) + z^*_0(z),\quad \forall (y,z)\in \inte_{Y\times Z_0}\Delta_{\bar L},
\end{equation}
and consequently (as $\Delta_{\bar L}\subset Y\times Z_0$),  
\begin{equation}  \label{eq_337d}
	y^*_0(\bar y) \le y^*_0(y) + z^*_0(z),\quad \forall (y,z)\in \Delta_{\bar L}.
\end{equation}
Next, we  show that
\begin{equation}  \label{eq_338d}
y^*_0(k^\prime)<0, \quad \forall k^\prime\in \inte K.
\end{equation}
Indeed, take $k^\prime  \in\inte K$.  With $y_{\bar L} \in Y$ (exists by $(C_1)$, used in Step 1) and  $k^\prime,  \bar y\in Y$,   by 
   Proposition \ref{pro_1a}(i),  there is  $\mu>0$ such that  $\bar y-\mu k^\prime \in -y_{\bar L}-\inte K$. Hence,  
 $(\bar y-\mu k^\prime ,0_Z)\in (-y_{\bar L} -\inte K)\times (V_{\bar L}\cap Z_0) $ which, by   \eqref{10d}, ensures that $(\bar y-\mu k^\prime ,0_Z)\in  \inte_{Y\times Z_0}\Delta_L$. 
 In turn, 
 \eqref{eq_336d}  leads to  $y^*_0(\bar y)< y^*_0(\bar y-\mu k^\prime )+z^*_0(0_Z)$, or $y^*_0(k^\prime )<0 $, and    \eqref{eq_338d} holds.

 {\it Step 3}.  We now build  an operator $ \bar T\in\L_\Phi$ such that $(\bar L,\bar y)\in \epi \Phi^*(., \bar T)$.

$\bullet$ Take $k_0\in \inte K$ such that $y^\ast_0(k_0)=-1$ (it is possible by \eqref{eq_338d}) 
 and $T\colon Z_0\to Y$ defined by $T(z)=-z^{\ast }_0(z) k_{0}$ for all $z\in Z_0$ ($z_0^*$ exists by the separation theorem in Step 2). It is easy to see that $ T\in\L(Z_0,Y)$ and   for all $z \in Z_0$, it holds
$\big(y_0^{\ast }\circ T\big)(z) =  y_0^*( -z_0^*(z)k_0 )  =  -y_0^* (k_0) z_0^*(z)     =    z^{\ast }_0(z).$
Thus,  \eqref{eq_337d} can be rewritten as
$y^*_0(\bar y) \le y^*_0(y) +(y^*_0\circ T)(z)$ for all $(y,z)\in \Delta_{\bar L},$
or equivalently, $y^{\ast }_0( y+T( z)-\bar y) \geq 0$ for all $(y,z)\in \Delta_{\bar L}.$
So, by   \eqref{eq_338d},   $y+T( z)-\bar y \not\in  \inte K $, yielding  
\begin{equation}  \label{abcd22}
	\bar y\notin y+T(z)-\inte K,\quad \forall (y,z)\in \Delta_{\bar L}.
\end{equation}
Now, as   $\big( \bar L(x)-\Phi(x,z),z\big) \in \Delta_{\bar L} $ for all $(x,z)\in \dom \Phi$, it follows from \eqref{abcd22} that 
\begin{equation}\label{eq_33abc}
	\bar y\notin \bar L(x)-\Phi(x,z) +T(z)-\inte K, \ \forall (x, z) \in \dom\Phi. 
\end{equation}

$\bullet$  On the other hand, using the  Hahn-Banach theorem \cite[Theorem 3.6]{Rudin91}, we can extend  $ z_0^*\in Z_0^\ast $ to $\bar z^*\in Z^*$, and hence, $T$ can be extended to $\bar T =  -\bar z^{\ast }(z) k_{0}         \in\L(Z,Y)$. Consequently,   \eqref{eq_33abc} holds (for all $(x,z)\in \dom \Phi$)
with $T$  being replaced by   $\bar T$. By \eqref{epiF*},   $(\bar L,\bar T,\bar y)\in \epi\Phi^*$,  showing that $(\bar L,\bar T)\in \dom \Phi^\ast$ (or, equivalently, $\bar T\in \L_\Phi$) and 
 {$(\bar L,\bar y)\in    \epi \Phi^*(., \bar T)$}.  
 Thus,   \eqref{21nw} holds and 
 \eqref{eq_18d}  does,  too.


We now    prove that   if,  in addition, $(C_0)$ holds then \eqref{eq_18ddlbis} holds.  For this, we will   show that  
 {under this extra assumption, }   the operator $\bar T$ appeared  in Step 3 for which   \eqref{eq_33abc} holds, must satisfy  $ \bar T \in \L_+(S,K)$,   or equivalently,  $ \bar T(S) \subset K$.   
  We claim firstly  that $ z^*_0 \in -S^+$. For any   $s\in S$, $\nu > 0$, one has 
 $-\nu s\in -S$ and hence,
by $(C_0)$, 
 $\Phi(\tilde x,-\nu s)\leqq_K \Phi (\tilde x,0_Z)$, which implies  that  $(\tilde x, -\nu s) \in \dom \Phi$ 
and that 
  $\bar L(\tilde x)-\Phi(\tilde x,0_Z)\in \bar L(\tilde x)-\Phi(\tilde x, -\nu s)-K$. 
Then $(\bar L(\tilde x)-\Phi(\tilde x,0_Z), - \nu s)\in \Delta_{\bar L}$. Again, by  \eqref{eq_337d}, one has,  for any $\nu > 0$, one has 
$y^*_0(\bar y)\le y^*_0(\bar L(\tilde x)-\Phi(\tilde x,0_Z))+z^*_0(-\nu s),$
or, equivalently, 
$$\frac{1}{\nu}\, y^*_0(\bar y)\le \frac{1}{\nu}\left[y^*_0(\bar L(\tilde x)-\Phi(\tilde x,0_Z))\right] - z^*_0(s), \ \forall \nu > 0.  $$
Letting $\nu$ tends to $+\infty$, one obtains $z^*_0(s)\le 0$. So, $z^\ast_0\in -S^+$. We then have, for all $s \in S$,  
$\bar T(s) = T(s) = - z_0^*(s) k_0 \in K$, showing that $\bar T (S) \subset K$. Consequently, $\bar T \in \L_\Phi^+$, as desired. 
\end{proof}


The next two theorems,   Theorems  \ref{cor_nonasymptotic_representing_epi_3} and \ref{thm_nonasymptotic_representing_epi},  propose other representations   of  $\epi_K \Phi(.,0_Z)^*$
 under different   regularity conditions $(C_2)-(C_7)$,  whose proofs are in the same vein as that of Theorem   \ref{thm_nonasymptotic_representing_epi_2}, and   are left to the  Appendix.

\begin{theorem}[$2^{nd}$   Representation of $\epi \Phi(.,0_Z)^*$]
\label{cor_nonasymptotic_representing_epi_3} 
Let $Z_0$ be the set defined in \eqref{Z0}.
Assume that  at least one of  the  conditions $(C_2)$, $(C_3)$, $(C_4)$, $(C_5)$, or $(C_6)$  holds.
 Then \eqref{eq_18d} holds.
Moreover, if,  in addition that $(C_{0})$ holds,    then \eqref{eq_18ddlbis} holds.
\end{theorem}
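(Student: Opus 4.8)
The plan is to mimic the three-step architecture of the proof of Theorem \ref{thm_nonasymptotic_representing_epi_2}, since by Proposition \ref{pro_incluepi} the inclusion $\mathcal{M}\subset\epi\Phi(.,0_Z)^*$ is automatic, and only the reverse inclusion $\epi\Phi(.,0_Z)^*\subset\mathcal{M}$ must be established. So fix $(\bar L,\bar y)\in\epi\Phi(.,0_Z)^*$, which by \eqref{epiF*} means $\bar y\notin\bar L(x)-\Phi(x,0_Z)-\inte K$ for all $x\in X$, and introduce the same convex set
\begin{equation*}
\Delta_{\bar L}:=\bigcup_{(x,z)\in\dom\Phi}\Big(\big(\bar L(x)-\Phi(x,z)-K\big)\times\{z\}\Big)\subset Y\times Z_0.
\end{equation*}
The whole argument from Step 2 and Step 3 of Theorem \ref{thm_nonasymptotic_representing_epi_2} — separating $(\bar y,0_Z)$ from $\Delta_{\bar L}$ inside $Y\times Z_0$, producing $(y_0^*,z_0^*)$, checking $y_0^*(k')<0$ on $\inte K$, building $T(z)=-z_0^*(z)k_0$ with $y_0^*(k_0)=-1$, extending $z_0^*$ to $\bar z^*\in Z^*$ by Hahn--Banach and $T$ to $\bar T\in\L(Z,Y)$, and concluding $(\bar L,\bar T,\bar y)\in\epi\Phi^*$ — goes through verbatim, as does the $(C_0)$-addendum showing $\bar T\in\L_+(S,K)$. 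Hence the only thing that genuinely needs a new argument is \textbf{Step 1}: under each of $(C_2)$--$(C_6)$ one must show $\inte_{Y\times Z_0}\Delta_{\bar L}\ne\emptyset$ and $(\bar y,0_Z)\notin\inte_{Y\times Z_0}\Delta_{\bar L}$.

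The second part of Step 1 is uniform: if $(\bar y,0_Z)\in\inte_{Y\times Z_0}\Delta_{\bar L}$, pick a neighborhood $U\times V$ of $(0_Y,0_Z)$ with $[(\bar y+U)\times V]\cap(Y\times Z_0)\subset\Delta_{\bar L}$, choose $\bar k\in U\cap\inte K$, deduce $(\bar y+\bar k,0_Z)\in\Delta_{\bar L}$, hence $\bar y\in\bar L(\bar x)-\Phi(\bar x,0_Z)-\inte K$ for some $\bar x$, contradicting the hypothesis on $(\bar L,\bar y)$; this is identical to Theorem \ref{thm_nonasymptotic_representing_epi_2} and uses nothing about which $(C_i)$ holds. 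So the real work is producing a nonempty $Z_0$-interior of $\Delta_{\bar L}$. Here I would handle the conditions in a chain. Condition $(C_2)$ directly gives $(\widehat x,\widehat y)$ and $\widehat V\in\mathcal{N}(0_Z)$ with $\Phi(\widehat x,z)\leqq_K\widehat y$ for $z\in\widehat V\cap Z_0$; then exactly as in the displayed implications \eqref{10d} one gets $(-\widehat y-\inte K)\times(\widehat V\cap Z_0)\subset\Delta_{\bar L}$, and since for a \emph{fixed} $\bar L$ the points $\bar L(\widehat x)-\widehat y$ and $-\widehat y$ differ by the element $\bar L(\widehat x)\in Y$, translating shows a genuine product neighborhood sits inside $\Delta_{\bar L}$, so $\inte_{Y\times Z_0}\Delta_{\bar L}\ne\emptyset$. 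Condition $(C_3)$ implies $(C_2)$ because continuity of $\Phi_{|Z_0}(\widehat x,\cdot)$ at $0_Z$ gives a neighborhood on which $\Phi(\widehat x,z)$ stays in $\Phi(\widehat x,0_Z)+\inte K\subset\widehat y+K$ for suitable $\widehat y$. Condition $(C_4)$ is the finite-dimensional case: when $\dim Z_0<\infty$ and $0_Z\in\ri(\pi(\dom\Phi))$, the convex set $\Delta_{\bar L}$ has nonempty interior in $Y\times Z_0$ because its projection onto $Z_0$ is $\pi(\dom\Phi)$ which is relatively solid, and the $Y$-fibers each contain a downward $K$-translate; one invokes that a convex set in a product whose base has interior and whose fibers have interior is solid. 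Condition $(C_5)$ is exactly where the Extended Open Mapping Theorem (Theorem \ref{lem_epiclosed}) enters: applied at a point $\widehat x$ with $(\widehat x,0_Z)\in\dom\Phi$ (using $K$-epi closedness, barreledness of $Z_0$, completeness/first-countability of $X,Y$, and $0_Z\in\icr(\pi(\dom\Phi))$) it yields precisely $0_Z\in\inte_{Z_0}\{z:\Phi(x,z)\leqq_K y\text{ for some }x\in U_0,\,y\in V_0\}$, which again feeds the \eqref{10d}-type computation to produce $\inte_{Y\times Z_0}\Delta_{\bar L}\ne\emptyset$. Condition $(C_6)$: from $\Phi(x,0_Z)\leqq_K\Phi(x,z)$ on $\dom\Phi$ and $\inte(\pi(\dom\Phi))\ne\emptyset$ one has $Z_0=Z$ and $\pi(\dom\Phi)$ solid in $Z$, and the monotonicity lets one replace $\Phi(x,z)$ by $\Phi(x,0_Z)$ in building interior points, reducing to a neighborhood argument like $(C_2)$.

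The main obstacle, as in the model proof, is organizational rather than deep: showing $\inte_{Y\times Z_0}\Delta_{\bar L}\ne\emptyset$ under the weakest conditions ($(C_5)$ in particular) requires invoking the Extended Open Mapping Theorem correctly, and one must be careful that the neighborhoods $U_0,V_0$ in Theorem \ref{lem_epiclosed} can be taken small enough that the resulting set of admissible $z$'s, once crossed with a $\inte K$-cone in the $Y$-direction and translated by $\bar L(x)$, genuinely lands in $\Delta_{\bar L}$ rather than merely in its closure. For $(C_4)$ the subtlety is the standard one about when a convex subset of a product of a finite-dimensional and an infinite-dimensional space has interior — it suffices that the $Z_0$-projection be relatively solid and some fiber be $\inte K$-solid, both of which hold here. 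Once Step 1 is in place for each $(C_i)$, Steps 2 and 3 are quoted from Theorem \ref{thm_nonasymptotic_representing_epi_2} without change, and the $(C_0)$ conclusion \eqref{eq_18ddlbis} follows by the same $\nu\to+\infty$ argument showing $z_0^*\in-S^+$ and hence $\bar T(S)\subset K$. I would therefore structure the proof as: (1) recall the reduction to $\epi\Phi(.,0_Z)^*\subset\mathcal{M}$ and set up $\Delta_{\bar L}$; (2) prove $(\bar y,0_Z)\notin\inte_{Y\times Z_0}\Delta_{\bar L}$ once and for all; (3) prove $\inte_{Y\times Z_0}\Delta_{\bar L}\ne\emptyset$ case by case, with $(C_3)\Rightarrow(C_2)$ and $(C_6)$ handled by elementary neighborhood arguments, $(C_4)$ by finite-dimensional convex-geometry, and $(C_5)$ by Theorem \ref{lem_epiclosed}; (4) invoke Steps 2--3 of Theorem \ref{thm_nonasymptotic_representing_epi_2} verbatim; (5) append the $(C_0)$ argument for \eqref{eq_18ddlbis}.
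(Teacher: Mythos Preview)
Your plan for $(C_2)$--$(C_5)$ is sound and essentially matches what the paper does, though the paper is more modular: it proves $(C_2)\Rightarrow(C_1)$, $(C_3)\Rightarrow(C_2)$, $(C_4)\Rightarrow(C_1)$ (via the finite basis/simplex argument), and $(C_5)\Rightarrow(C_1)$ (via Theorem \ref{lem_epiclosed} applied to $\Phi-L$), and then simply quotes Theorem \ref{thm_nonasymptotic_representing_epi_2}. Your Step-1-case-by-case version is equivalent in content, just less economical.

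The real issue is $(C_6)$. You keep the same set $\Delta_{\bar L}$ and claim that ``the monotonicity lets one replace $\Phi(x,z)$ by $\Phi(x,0_Z)$ in building interior points, reducing to a neighborhood argument like $(C_2)$.'' But the inequality in $(C_6)$ is $\Phi(x,0_Z)\leqq_K\Phi(x,z)$, which goes the \emph{wrong way} for bounding $\Phi(\cdot,z)$ from above uniformly in $z$: it makes the $Y$-fiber over $z$ \emph{smaller} than the one over $0_Z$, not larger. More fundamentally, $(C_6)$ only asserts $\inte(\pi(\dom\Phi))\ne\emptyset$ \emph{somewhere}, not that $0_Z$ lies in this interior; in particular $(C_6)$ does not imply $(C_1)$ or $(C_2)$, and there is no neighborhood of $0_Z$ over which one has the uniform bound you need to force $\inte_{Y\times Z_0}\Delta_{\bar L}\ne\emptyset$ by a product-neighborhood argument. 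Your claim here is at best unproven.

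The paper fixes this by replacing $\Delta_{\bar L}$ with the \emph{thicker} set
\[
\Delta'_{\bar L}:=\bigcup_{(x,z)\in\dom\Phi}\Big[\big(\bar L(x)-\Phi(x,z)-K\big)\times\big(z-\pi(\dom\Phi)\big)\Big].
\]
Now the interior is immediate: pick any $(x_0,z_0)\in\dom\Phi$ and observe that $\big(\bar L(x_0)-\Phi(x_0,z_0)-\inte K\big)\times\big(z_0-\inte\pi(\dom\Phi)\big)$ is open and contained in $\Delta'_{\bar L}$. The price is that the argument for $(\bar y,0_Z)\notin\inte\Delta'_{\bar L}$ is no longer uniform: from $(\bar y+\bar k,0_Z)\in\Delta'_{\bar L}$ one only gets $\bar y+\bar k\in\bar L(\bar x)-\Phi(\bar x,\bar z)-K$ for some $(\bar x,\bar z)\in\dom\Phi$ with $\bar z\in\pi(\dom\Phi)$, and \emph{here} the monotonicity $\Phi(\bar x,0_Z)\leqq_K\Phi(\bar x,\bar z)$ is used (in the correct direction) to pass to $\bar y\in\bar L(\bar x)-\Phi(\bar x,0_Z)-\inte K$ and reach the contradiction. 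So for $(C_6)$ you need to change the separating set and relocate where the monotonicity enters; the rest (separation, building $\bar T$, the $(C_0)$-addendum) then runs as in Theorem \ref{thm_nonasymptotic_representing_epi_2}.
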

\begin{proof}    (See Appendix \ref{A-B}). \end{proof}

\begin{theorem}[$3^{rd}$ Representation of $\epi \Phi(.,0_Z)^*$]
\label{thm_nonasymptotic_representing_epi} 
Assume that $\inte S\ne\emptyset$ and that $(C_7)$ holds. 
 Then,  \eqref{eq_18ddlbis} holds.
 \end{theorem}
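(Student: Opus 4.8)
The plan is to mimic the structure of the proof of Theorem \ref{thm_nonasymptotic_representing_epi_2}, but to exploit the extra hypothesis $\inte S\ne\emptyset$ to replace the abstract condition $(C_1)$ and to get the positivity of the separating operator directly from the monotonicity half of $(C_7)$. By Proposition \ref{pro_incluepi} we always have $\epi\Phi(.,0_Z)^*\supset\mathcal{M}\supset\mathcal{M}_+$, so it suffices to prove the reverse inclusion $\epi\Phi(.,0_Z)^*\subset\mathcal{M}_+$. Fix $(\bar L,\bar y)\in\epi\Phi(.,0_Z)^*$, so that, by \eqref{epiF*}, $\bar y\notin\bar L(x)-\Phi(x,0_Z)-\inte K$ for all $x\in X$. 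As in Step 1 of the proof of Theorem \ref{thm_nonasymptotic_representing_epi_2}, form the convex set
\[
\Delta_{\bar L}:=\bigcup_{(x,z)\in\dom\Phi}\Big(\big(\bar L(x)-\Phi(x,z)-K\big)\times\{z\}\Big)\subset Y\times Z_0.
\]

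First I would show $\inte_{Y\times Z_0}\Delta_{\bar L}\ne\emptyset$. This is where $\inte S\ne\emptyset$ and the first line of $(C_7)$, namely $0_Z\in\pi(\dom\Phi)-\inte S$, enter: there is $(\widehat x,\widehat z)\in\dom\Phi$ with $\widehat z\in\inte S$, so a whole neighborhood $\widehat z+W$ (with $W\in\mathcal{N}(0_Z)$, $W$ may be taken in $Z_0$) lies in $S\cap\pi(\dom\Phi)$... more carefully, one uses convexity of $\dom\Phi$ and the fact that for $z$ near $0_Z$ in $Z_0$ the point $\widehat z$ can be written as a convex-type perturbation keeping membership in $\pi(\dom\Phi)$; combined with the monotonicity line of $(C_7)$ (which forces $\Phi(x,0_Z)\leqq_K\Phi(x,z)$ when $z\geqq_S 0_Z$), this produces, for a fixed feasible $x$ and all $z$ in a $Z_0$-neighborhood of $0_Z$, an upper bound $\Phi(x,z)-\bar L(x)\leqq_K y_{\bar L}$ for a suitable $y_{\bar L}\in Y$. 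That is exactly the content that $(C_1)$ supplied before, so the inclusion $(-y_{\bar L}-\inte K)\times(V_{\bar L}\cap Z_0)\subset\inte_{Y\times Z_0}\Delta_{\bar L}$ goes through verbatim, giving nonemptiness of the interior. The argument that $(\bar y,0_Z)\notin\inte_{Y\times Z_0}\Delta_{\bar L}$ is then identical to Step 1 there (a point of the interior near $(\bar y,0_Z)$ would, after adding $\bar k\in U\cap\inte K$, contradict $\bar y\notin\bar L(\bar x)-\Phi(\bar x,0_Z)-\inte K$).

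Next, Steps 2 and 3 proceed exactly as before: convex separation of the singleton $(\bar y,0_Z)$ from $\Delta_{\bar L}$ in $Y\times Z_0$ yields $(y_0^*,z_0^*)$ with $y_0^*(\bar y)\le y_0^*(y)+z_0^*(z)$ on $\Delta_{\bar L}$ and $y_0^*(k')<0$ for all $k'\in\inte K$; choosing $k_0\in\inte K$ with $y_0^*(k_0)=-1$ and setting $T(z)=-z_0^*(z)k_0$, then extending $z_0^*$ to $\bar z^*\in Z^*$ via Hahn--Banach and $\bar T(z)=-\bar z^*(z)k_0\in\L(Z,Y)$, one obtains $(\bar L,\bar T,\bar y)\in\epi\Phi^*$, i.e. $\bar T\in\L_\Phi$ and $(\bar L,\bar y)\in\epi\Phi^*(.,\bar T)\subset\mathcal{M}$. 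The only remaining — and I expect the only genuinely new — point is to verify $\bar T\in\L_+(S,K)$, equivalently $z_0^*\in -S^+$. Here I would use the monotonicity line of $(C_7)$: for $s\in S$ and $\nu>0$, pick any $(x_s,z_s)\in\dom\Phi$; since $z_s+\nu s\geqq_S z_s$... actually one wants points of the form $(\tilde x,\nu s)$ or $(\tilde x, z)$ with $z\geqq_S 0_Z$ inside $\dom\Phi$. Using again $0_Z\in\pi(\dom\Phi)-\inte S$ to get $(\tilde x,\hat z)\in\dom\Phi$ with $\hat z\in\inte S$, convexity lets me slide along the segment toward points $(\tilde x, z)$ with $z\geqq_S 0_Z$ arbitrarily large in the $s$-direction; the monotonicity gives $\Phi(\tilde x,0_Z)\leqq_K\Phi(\tilde x,z)$, hence $(\bar L(\tilde x)-\Phi(\tilde x,0_Z)-K)\times\{z\}\subset\Delta_{\bar L}$, and feeding $z=\lambda s$ (for large $\lambda$, legitimate by the interiority) into \eqref{eq_337d} and dividing by $\lambda$, then letting $\lambda\to+\infty$, forces $z_0^*(s)\le 0$. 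Thus $z_0^*\in -S^+$, so $\bar T(s)=-z_0^*(s)k_0\in K$ for all $s\in S$, giving $\bar T\in\L_\Phi^+$ and $(\bar L,\bar y)\in\mathcal{M}_+$. Combined with Proposition \ref{pro_incluepi}, this yields \eqref{eq_18ddlbis}.

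The main obstacle is the bookkeeping in the first paragraph of Step 1: translating $\inte S\ne\emptyset$ together with $0_Z\in\pi(\dom\Phi)-\inte S$ and the conditional monotonicity of $(C_7)$ into the uniform upper-bound statement that played the role of $(C_1)$ — in particular making sure that, for all $z$ in a $Z_0$-neighborhood of $0_Z$, there is a single admissible $x$ (or a locally bounded family) with $\Phi(x,z)-\bar L(x)\leqq_K y_{\bar L}$. Once that is in place, the separation machinery and the Hahn--Banach extension are routine and the positivity $\bar T\in\L_+(S,K)$ is a short limiting argument as sketched. An alternative, cleaner route to reach \eqref{eq_18ddlbis} directly would be to show that $(C_7)$ together with $\inte S\ne\emptyset$ implies the pair $(C_6)$ and $(C_0)$ (note $\inte(\pi(\dom\Phi))\supset$ a neighborhood coming from $\widehat z\in\inte S$, and $(C_0)$ follows by taking $\tilde x=\widehat x$ and using monotonicity on $-S\ni z$), and then simply invoke the already-proved Theorem \ref{cor_nonasymptotic_representing_epi_3}; I would check this reduction first, since if it works the theorem becomes a one-line corollary and the technical estimate above is avoided entirely.
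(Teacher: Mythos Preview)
Your proposal has two genuine gaps, and the alternative route you sketch at the end does not work either.

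\textbf{The main approach.} Working with the original set $\Delta_{\bar L}=\bigcup_{(x,z)\in\dom\Phi}\big((\bar L(x)-\Phi(x,z)-K)\times\{z\}\big)$ you cannot, from $(C_7)$ and $\inte S\ne\emptyset$ alone, manufacture the $(C_1)$-type bound you need. Knowing $(\widehat x,\widehat z)\in\dom\Phi$ with $\widehat z\in\inte S$ gives you nothing about points $(x,z)\in\dom\Phi$ for $z$ ranging over a full $Z_0$-neighbourhood of $0_Z$; convexity of $\dom\Phi$ only lets you interpolate, not populate a neighbourhood. The positivity step has the same defect: to feed $(\,\bar L(\tilde x)-\Phi(\tilde x,0_Z),\,\lambda s\,)$ into $\Delta_{\bar L}$ you would need $(\tilde x,\lambda s)\in\dom\Phi$ for arbitrarily large $\lambda$, which nothing in the hypotheses guarantees.

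\textbf{The alternative route.} $(C_7)$ implies neither $(C_6)$ nor $(C_0)$. The monotonicity clause of $(C_7)$ asserts $\Phi(x,0_Z)\leqq_K\Phi(x,z)$ only for $z\geqq_S 0_Z$, whereas $(C_6)$ demands it for \emph{all} $(x,z)\in\dom\Phi$; and $(C_0)$ asks for the \emph{reverse} inequality $\Phi(\tilde x,z)\leqq_K\Phi(\tilde x,0_Z)$ on $z\in-S$, which is not what $(C_7)$ says. Nor does $0_Z\in\pi(\dom\Phi)-\inte S$ force $\inte(\pi(\dom\Phi))\ne\emptyset$.

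\textbf{What the paper does.} The missing idea is to change the separating set, in the spirit of the $(C_6)$-proof (Appendix B$(\epsilon)$), to
\[
\Delta''_{\bar L}:=\bigcup_{(x,z)\in\dom\Phi}\Big[\big(\bar L(x)-\Phi(x,z)-K\big)\times\big(z-S\big)\Big]\subset Y\times Z.
\]
This set is tailored to $(C_7)$: since $\inte S\ne\emptyset$ and there is $(\widehat x,\widehat z)\in\dom\Phi$ with $\widehat z\in\inte S$, the block $(\bar L(\widehat x)-\Phi(\widehat x,\widehat z)-\inte K)\times(\widehat z-\inte S)$ is open in $Y\times Z$ and contained in $\Delta''_{\bar L}$, so $\inte\Delta''_{\bar L}\ne\emptyset$; and if $(\bar y+\bar k,0_Z)\in\Delta''_{\bar L}$ for some $\bar k\in\inte K$, then the witnessing $(\bar x,\bar z)\in\dom\Phi$ satisfies $0_Z\in\bar z-S$, i.e.\ $\bar z\geqq_S 0_Z$, which is exactly the range where $(C_7)$'s monotonicity applies and yields the contradiction. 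The separation and the construction of $\bar T$ are then as before (now directly in $Y\times Z$, no Hahn--Banach extension needed). For positivity one simply observes that for any $(\tilde x,0_Z)\in\dom\Phi$, $s\in S$, $\nu>0$ one has $-\nu s\in 0_Z-S$, hence $\big(\bar L(\tilde x)-\Phi(\tilde x,0_Z),\,-\nu s\big)\in\Delta''_{\bar L}$ automatically, and the usual limiting argument gives $z_0^*(s)\le0$; no extra domain membership is required. Replace your $\Delta_{\bar L}$ by $\Delta''_{\bar L}$ and the whole argument goes through.
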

\begin{proof}    (See  Appendix \ref{A-C}).\end{proof}



\subsection{General Vector Inequalities - Vector Farkas Lemmas}

We now use the representations of  the epigraph  $\epi  \Phi (.,0_Z)^*$   established in the previous subsection  to  derive characterizations (i.e., conditions that is equivalent to) of the general ``vector inequality"  
  \begin{equation}\label{vector-inequality}
\Phi(x,0_Z) - L(x) + y    \notin -\inte K,\; \forall x\in X, 
\end{equation}
for some $L\in \L(X, Y)$ and $y \in Y$. 
 Each pair of such  equivalent conditions is called an extended Farkas lemma for vector mappings, for instance, the pairs  $(\alpha) $ - $(\beta)$, $(\alpha) $ - $(\gamma)$ in Theorem    \ref{thm_PFL1}  {below} are two  versions of Farkas lemma.   
The same observation applies to  Theorems  \ref{cor_2i}, \ref{cor_3i}.  In some special case with concrete forms mapping $\Phi$  (see Sections 5, 6),  these results  go back to the vector Farkas lemmas  in \cite{DGLL17,DGLMJOTA16,DL2017}  and even  when $Y=\RR$, different systems, different functions $\Phi$ will lead to variants of  results that cover/extend the known extended Farkas lemmas in the literature (see, e.g.,  \cite{Bot2010,DNV-08,DVV-14},  and   references therein).  We say that a version of Farkas lemma is {\it $\V$-stable}  (or we have a  {\it $\V$-stable Farkas lemma})  if   the  pair is  equivalent for all $L \in \V \subset \L(X, Y)$ and $y \in Y$.  
It is called {\it stable} if it is $\mathcal{V}$-stable with $\mathcal{V} = \L(X, Y)$.     We are now in a position to prove   principles for the  $\V$-stability of  vector inequalities \eqref{vector-inequality}. {However, for the simplicity, we state and prove here results with only stable Farkas lemmas (i.e., with 
 with $\mathcal{V} = \L(X, Y)$). The general $\V$-stable vector Farkas lemmas are mentioned in Remark \ref{rem3end}  below. }

\begin{theorem}[Principles for stable vector Farkas lemmas]
\label{thm_PFL1}
 Consider the  statements:  

${\rm (a)}$ {$\epi  \Phi (.,0_Z)^*=\mathcal{M}, $}

$\rm(b)$ {$\epi \Phi (.,0_Z)^*=\mathcal{M}_+, $}

$\rm(c)$ For all {$(L,y)\in \L(X,Y)\times Y$}, two following assertions are equivalent

\hskip1cm $(\alpha)$ $\Phi(x,0_Z)-L(x)+y\notin -\inte K,\; \forall x\in X,$

\hskip1cm $(\beta)$ There exists {$T\in \L_\Phi $} such that 
 $$\Phi(x,z)-L(x)-T(z)+y\notin -\inte K,\quad \forall (x,z)\in X\times Z,\ \ \ \ \ \ \ \ $$

$\rm(d)$ For all {$(L,y)\in \L(X,Y)\times Y$}, two following assertions are equivalent

\hskip1cm $(\alpha)$ $\Phi(x,0_Z)-L(x)+y\notin -\inte K,\; \forall x\in X,$

\hskip1cm $(\gamma)$ There exists {$T\in \L_\Phi^+$} such that 
 $$\Phi(x,z)-L(x)-T(z)+y\notin -\inte K,\quad \forall (x,z)\in X\times Z. \ \ \ \ \ \ \ \ \ $$
Then $[\rm(a)\!   \Leftrightarrow\! \rm(c)]$ and  $[\rm(b)\!   \Leftrightarrow \!\rm(d)]$. 
\end{theorem}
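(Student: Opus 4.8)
The plan is to prove the equivalence $[\mathrm{(a)} \Leftrightarrow \mathrm{(c)}]$ first, and then observe that $[\mathrm{(b)} \Leftrightarrow \mathrm{(d)}]$ follows by an entirely parallel argument with $\mathcal{M}$, $\L_\Phi$ replaced by $\mathcal{M}_+$, $\L_\Phi^+$. The basic tool throughout is the characterization \eqref{epiF*}: for a proper mapping, $(L,y)\in\epi F^*$ iff $F(x)-L(x)+y\notin -\inte K$ for all $x$. Applied to $F = \Phi(.,0_Z)$, this says exactly that assertion $(\alpha)$ is equivalent to $(L,y)\in\epi\Phi(.,0_Z)^*$. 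Applied to $F=\Phi$ itself (a mapping on $X\times Z$), together with the definition \eqref{eq:14_nwewewE} of $\mathcal{M}$, it says that assertion $(\beta)$ — the existence of $T\in\L_\Phi$ making $\Phi(x,z)-L(x)-T(z)+y\notin-\inte K$ for all $(x,z)$ — is equivalent to $(L,T,y)\in\epi\Phi^*$ for some $T\in\L_\Phi$, i.e. to $(L,y)\in\epi\Phi^*(.,T)$ for some $T\in\L_\Phi$, i.e. to $(L,y)\in\mathcal{M}$.

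With these two translations in hand the proof is short. For $[\mathrm{(a)}\Rightarrow\mathrm{(c)}]$: assume $\epi\Phi(.,0_Z)^*=\mathcal{M}$ and fix $(L,y)$. If $(\alpha)$ holds then $(L,y)\in\epi\Phi(.,0_Z)^*=\mathcal{M}$, so $(L,y)\in\epi\Phi^*(.,T)$ for some $T\in\L_\Phi$, which by \eqref{epiF*} is precisely $(\beta)$. Conversely if $(\beta)$ holds then $(L,y)\in\mathcal{M}=\epi\Phi(.,0_Z)^*$, so $(\alpha)$ holds; hence $(\alpha)\Leftrightarrow(\beta)$, which is $\mathrm{(c)}$. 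For $[\mathrm{(c)}\Rightarrow\mathrm{(a)}]$: Proposition \ref{pro_incluepi} already gives $\epi\Phi(.,0_Z)^*\supset\mathcal{M}$, so it remains to prove the reverse inclusion. Take $(L,y)\in\epi\Phi(.,0_Z)^*$; by \eqref{epiF*} this is $(\alpha)$, so by $\mathrm{(c)}$ we get $(\beta)$, which as noted means $(L,y)\in\mathcal{M}$. Thus $\epi\Phi(.,0_Z)^*\subset\mathcal{M}$ and equality holds. The proof of $[\mathrm{(b)}\Leftrightarrow\mathrm{(d)}]$ is word-for-word the same, using the inclusion $\epi\Phi(.,0_Z)^*\supset\mathcal{M}_+$ from Proposition \ref{pro_incluepi} and noting that $(\gamma)$ asks for $T$ in the smaller set $\L_\Phi^+$ but the translation via \eqref{epiF*} and \eqref{eq:14_nwewewE} is identical.

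I do not expect any serious obstacle here: the statement is essentially a repackaging of \eqref{epiF*} and the definitions \eqref{Lphi}, \eqref{Lphi+}, \eqref{eq:14_nwewewE}, combined with the one-directional inclusions already recorded in Proposition \ref{pro_incluepi}. The only point that needs a little care is the bookkeeping that $\Phi$ is proper and $K$-convex (standing assumptions of the section), so that \eqref{epiF*} is legitimately applicable both to $\Phi$ and to $\Phi(.,0_Z)$ — the latter being proper because $0_Z\in\pi(\dom\Phi)$ is assumed. One should also state explicitly at the outset the two equivalences "$(\alpha)\Leftrightarrow (L,y)\in\epi\Phi(.,0_Z)^*$" and "$(\beta)\Leftrightarrow (L,y)\in\mathcal{M}$" (and "$(\gamma)\Leftrightarrow (L,y)\in\mathcal{M}_+$"), since everything else is then a two-line chase. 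If a $\mathcal{V}$-stable version is desired, one restricts $L$ to range over $\mathcal{V}$ throughout and replaces the set equalities in $\mathrm{(a)},\mathrm{(b)}$ by the corresponding equalities of the "slices" over $\mathcal{V}$, but the logical skeleton is unchanged.
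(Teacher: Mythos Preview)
Your proposal is correct and follows essentially the same approach as the paper: both reduce the theorem to the two translations $(\alpha)\Leftrightarrow (L,y)\in\epi\Phi(.,0_Z)^*$ and $(\beta)\Leftrightarrow (L,y)\in\mathcal{M}$ via \eqref{epiF*}, after which $[\mathrm{(a)}\Leftrightarrow\mathrm{(c)}]$ is immediate. The only minor difference is that your appeal to Proposition~\ref{pro_incluepi} in the direction $[\mathrm{(c)}\Rightarrow\mathrm{(a)}]$ is unnecessary: once the translations are in hand, $\mathrm{(c)}$ is literally the statement that the two sets have the same elements, so both inclusions follow at once.
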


\begin{proof}  Take  {$(L,y)\in \L(X,Y)\times Y$}. Then by 
\eqref{epiF*},  one has 
\begin{eqnarray*}
(\alpha) \  &\Longleftrightarrow&  \ (L,y)\in \epi \Phi(.,0_Z)^*   \ \ \textrm{ and } \\
(\beta)  \  &\Longleftrightarrow& \   \Big(  \exists    T \in \L_\Phi  \ \textrm{s.t.}\   (L,T,y)\in\epi \Phi^*  \Big)\\
&\Longleftrightarrow& \   \Big(  \exists    T \in \L_\Phi  \ \textrm{s.t.}\   (L,y)  \in\epi \Phi^* (\cdot, T) \Big). 
\end{eqnarray*} 
 The first  equivalence  $[\rm(a)\!   \Leftrightarrow\! \rm(c)]$  thus follows.  The proof of  $[\rm(b)\!   \Leftrightarrow\! \rm(d)]$  is similar. 
 \end{proof}

\begin{remark}($\V$-stable vector Farkas lemmas) \label{rem3end} Let $\V \subset \L(X, Y)$ and let 

$({\rm a}^\prime)$  $\epi  \Phi (.,0_Z)^*\cap \V =\mathcal{M} \cap \V, $

$({\rm c}^\prime)$ For all {$L \in \V \times Y$}, two following assertions are equivalent

\hskip1cm $(\alpha)$ $\Phi(x,0_Z)-L(x)+y\notin -\inte K,\; \forall x\in X,$

\hskip1cm $(\beta)$ There exists {$T\in \L_\Phi $} such that 
 $$\Phi(x,z)-L(x)-T(z)+y\notin -\inte K,\quad \forall (x,z)\in X\times Z.\ \ \ \ \ \ \ \ $$
Then one of  {\it ``$\V$-stable vector Farkas lemma"} states that $[({\rm a}^\prime)    \Leftrightarrow   ({\rm c}^\prime)  ]$.  The $\V$-stability of other vector   Farkas lemmas (as 
$[\rm(b)\!   \Leftrightarrow\! \rm(d)]$ in Theorem \ref{thm_PFL1} and in the next theorems) are similar.   
\end{remark}
In the case  when the perturbation mapping is $K$- convex,   regularity conditions $(C_i)$, $i =0, 1, 2, 3, 4, 5, 6, 7$,  ensure  the  $\mathcal{V}$-stability/stability  of  vector inequality       \eqref{vector-inequality}  as shown in the next theorems.   

\begin{theorem}[Stable vector Farkas lemma I]
\label{cor_2i}
Assume that $\Phi$ is a $K$-convex mapping and that the condition $(C_1)$  holds. Then
  $\rm(c)$ in  Theorem \ref{thm_PFL1} holds. If, in addition that   $(C_0)$ fulfills then $\rm(d)$  in  Theorem \ref{thm_PFL1} holds.
\end{theorem}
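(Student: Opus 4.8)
The plan is to reduce Theorem \ref{cor_2i} to the representations of $\epi \Phi(.,0_Z)^*$ already proved. The statement claims that under $K$-convexity of $\Phi$ and condition $(C_1)$, assertion $\rm(c)$ of Theorem \ref{thm_PFL1} holds, and that if additionally $(C_0)$ holds, then $\rm(d)$ holds as well. Since Theorem \ref{thm_PFL1} establishes the pure logical equivalences $[\rm(a)\Leftrightarrow\rm(c)]$ and $[\rm(b)\Leftrightarrow\rm(d)]$ with no regularity hypotheses at all, everything comes down to verifying assertion $\rm(a)$ (respectively $\rm(b)$), i.e., the set equalities $\epi \Phi(.,0_Z)^* = \mathcal{M}$ and $\epi \Phi(.,0_Z)^* = \mathcal{M}_+$.

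First I would invoke Theorem \ref{thm_nonasymptotic_representing_epi_2}: under the hypothesis that $\Phi$ is proper and $K$-convex (which is a standing assumption of Section \ref{section4}, reiterated here) together with $(C_1)$, that theorem gives exactly \eqref{eq_18d}, namely $\epi \Phi(.,0_Z)^* = \mathcal{M}$. This is precisely statement $\rm(a)$. Then, by the equivalence $[\rm(a)\Leftrightarrow\rm(c)]$ from Theorem \ref{thm_PFL1}, statement $\rm(c)$ follows immediately. For the second part, if in addition $(C_0)$ holds, Theorem \ref{thm_nonasymptotic_representing_epi_2} yields \eqref{eq_18ddlbis}, which in particular gives $\epi \Phi(.,0_Z)^* = \mathcal{M}_+$; this is statement $\rm(b)$, and the equivalence $[\rm(b)\Leftrightarrow\rm(d)]$ then delivers $\rm(d)$.

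There is essentially no obstacle here: the theorem is a direct corollary packaging the representation result (Theorem \ref{thm_nonasymptotic_representing_epi_2}) with the abstract Farkas principle (Theorem \ref{thm_PFL1}). The only point requiring a word of care is that the properness of $\Phi$ is needed to apply Theorem \ref{thm_nonasymptotic_representing_epi_2} (and also for \eqref{epiF*}, on which Theorem \ref{thm_PFL1} rests); but this is guaranteed by the blanket convention at the start of Section \ref{section4}. So the proof is a two-line citation: apply Theorem \ref{thm_nonasymptotic_representing_epi_2} to get \eqref{eq_18d} (hence $\rm(a)$), then Theorem \ref{thm_PFL1} to get $\rm(c)$; under the extra assumption $(C_0)$, apply Theorem \ref{thm_nonasymptotic_representing_epi_2} to get \eqref{eq_18ddlbis} (hence $\rm(b)$), then Theorem \ref{thm_PFL1} again to get $\rm(d)$. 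If I wanted to be thorough I would also note that the analogous $\mathcal{V}$-stable version follows the same way using the intersected equalities of Remark \ref{rem3end}, but that is not needed for the statement as given.
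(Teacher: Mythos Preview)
Your proposal is correct and follows essentially the same approach as the paper's own proof: apply Theorem~\ref{thm_nonasymptotic_representing_epi_2} to obtain $\rm(a)$ (and, under $(C_0)$, $\rm(b)$), then invoke the equivalences of Theorem~\ref{thm_PFL1} to conclude $\rm(c)$ (respectively $\rm(d)$). The paper's proof is exactly this two-line citation, so there is nothing to add.
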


\begin{proof} When $\Phi$ is a $K$-convex mapping   and $(C_1)$ holds then  by Theorem \ref{thm_nonasymptotic_representing_epi_2}, $\rm(a)$ in Theorem \ref{thm_PFL1} holds and by this theorem      
$\rm (c)$  holds.   If furthermore, $(C_0)$ holds, then again,  Theorem \ref{thm_nonasymptotic_representing_epi_2}  
assures that $\rm(b)$  in Theorems \ref{thm_PFL1} holds,   and by this very theorem, (d) holds. 
\end{proof}

\begin{theorem}[Stable  vector Farkas lemma II]
\label{cor_3i}
Assume that $\Phi$ is a $K$-convex mapping. The following assertions hold:

  $\mathrm{(i)}$\  If $(C_7)$ holds then both  $\rm(c)$ and $\rm (d)$ of  Theorems \ref{thm_PFL1} hold.
  
    $\mathrm{(ii)}$\  If at least one of  the  conditions $(C_2)$, $(C_3), \ldots,  (C_6)$ holds   then   $\rm(c)$ of  Theorem \ref{thm_PFL1} holds. If, in addition that $(C_0)$ holds then   $\rm(d)$ holds as well.
\end{theorem}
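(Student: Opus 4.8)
The plan is to deduce Theorem~\ref{cor_3i} directly from the representation theorems of Section~3 combined with the abstract equivalences already packaged in Theorem~\ref{thm_PFL1}. The key observation is that Theorem~\ref{thm_PFL1} has reduced everything to a purely set-theoretic statement: $\rm(c)$ is equivalent to the representation $\epi\Phi(.,0_Z)^*=\mathcal{M}$ (statement $\rm(a)$), and $\rm(d)$ is equivalent to the tighter representation $\epi\Phi(.,0_Z)^*=\mathcal{M}_+$ (statement $\rm(b)$). So the work is entirely in verifying that the hypotheses of Theorem~\ref{cor_3i} trigger the appropriate representation theorem, and no fresh analysis is needed.

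For part~$\mathrm{(i)}$, I would argue as follows: assume $\Phi$ is $K$-convex and $(C_7)$ holds. Then Theorem~\ref{thm_nonasymptotic_representing_epi} (the $3^{rd}$ Representation), which requires exactly $\inte S\ne\emptyset$ together with $(C_7)$, yields \eqref{eq_18ddlbis}, i.e.\ $\epi\Phi(.,0_Z)^*=\mathcal{M}=\mathcal{M}_+$. In particular both $\rm(a)$ and $\rm(b)$ of Theorem~\ref{thm_PFL1} hold, so by the biconditionals $[\rm(a)\Leftrightarrow\rm(c)]$ and $[\rm(b)\Leftrightarrow\rm(d)]$ of that theorem, both $\rm(c)$ and $\rm(d)$ hold. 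One subtlety to flag: Theorem~\ref{thm_nonasymptotic_representing_epi} is stated with the standing hypothesis $\inte S\ne\emptyset$, whereas the hypothesis of Theorem~\ref{cor_3i}(i) as written only mentions $(C_7)$; I would either add $\inte S\ne\emptyset$ to the statement or note that the first line of $(C_7)$, namely $0_Z\in\pi(\dom\Phi)-\inte S$, already forces $\inte S\ne\emptyset$, so the hypothesis is self-contained. This is the only real gap-filling the proof requires.

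For part~$\mathrm{(ii)}$, suppose one of $(C_2),(C_3),(C_4),(C_5),(C_6)$ holds together with $K$-convexity of $\Phi$. Then Theorem~\ref{cor_nonasymptotic_representing_epi_3} (the $2^{nd}$ Representation) gives \eqref{eq_18d}, i.e.\ $\epi\Phi(.,0_Z)^*=\mathcal{M}$, which is $\rm(a)$; invoking $[\rm(a)\Leftrightarrow\rm(c)]$ of Theorem~\ref{thm_PFL1} delivers $\rm(c)$. If in addition $(C_0)$ holds, the same Theorem~\ref{cor_nonasymptotic_representing_epi_3} upgrades the conclusion to \eqref{eq_18ddlbis}, hence $\mathcal{M}=\mathcal{M}_+$ and $\rm(b)$ holds; then $[\rm(b)\Leftrightarrow\rm(d)]$ gives $\rm(d)$. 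Thus the proof is a short two-line invocation in each bullet.

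The proof presents essentially no obstacle beyond bookkeeping: the genuine mathematical content—the three representation theorems and the abstract equivalence—has already been established. The one point demanding care is making sure the regularity hypotheses are quoted with exactly the side conditions their source theorems require (the $\inte S\ne\emptyset$ issue in part~(i), and the standing assumptions from the start of Section~3, namely that $\Phi$ is proper $K$-convex and $0_Z\in\pi(\dom\Phi)$, which are implicit in all the $(C_i)$). I would therefore begin the proof by noting these standing assumptions are in force, then dispatch (i) via Theorem~\ref{thm_nonasymptotic_representing_epi} and Theorem~\ref{thm_PFL1}, and (ii) via Theorem~\ref{cor_nonasymptotic_representing_epi_3} and Theorem~\ref{thm_PFL1}.
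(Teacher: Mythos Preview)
Your proposal is correct and follows exactly the same route as the paper: the paper's proof is a one-line invocation of Theorems~\ref{thm_nonasymptotic_representing_epi} and~\ref{thm_PFL1} for part~(i), and Theorems~\ref{cor_nonasymptotic_representing_epi_3} and~\ref{thm_PFL1} for part~(ii). Your additional remark that the clause $0_Z\in\pi(\dom\Phi)-\inte S$ in $(C_7)$ already forces $\inte S\ne\emptyset$ is a useful clarification the paper leaves implicit.
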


\begin{proof} Similar to the proof of Theorem \ref{cor_2i}, $\rm(i)$ follows from  Theorems   \ref{thm_nonasymptotic_representing_epi}  and   \ref{thm_PFL1}    while $\rm(ii)$ follows from  Theorems  \ref{cor_nonasymptotic_representing_epi_3} and    \ref{thm_PFL1}. 
\end{proof}     


\section{Duality for  General Vector Optimization Problems}

Let $X,Y,Z$  and  $K, S$   (with $\inte K \ne \emptyset$) be the spaces and cones as in Section 3, $\pi$ be the canonical projection from $X\times Z$  to $Z$, and     
  $\Phi\colon X\times Z\to Y^\infty$ be a perturbation mapping.  Assume that   
 $ 0_Z \in \pi (\dom \Phi)$\footnote{which  means $\dom \Phi(.,0_Z)\ne\emptyset$, or in the other words, (P) is feasible.}.
Consider  the \emph{general vector optimization problem:} 
\begin{align*}
({\rm P})\qquad \mathop{\winf}\limits_{x\in X}\Phi(x,0_Z).
\end{align*}
The \emph{dual problem} and the \emph{{loose}  dual problem} of $({\rm P})$ are defined respectively as follows:
\begin{align}
 ({\rm D})&\qquad \mathop{\wsup}\limits_{{T\in \L_\Phi}}[-\Phi^* (0_\L,T)], \label{VD} \\  
{({\rm D}_\ell)}&\qquad \mathop{\wsup}\limits_{{T\in \L_\Phi^+}}[-\Phi^* (0_\L,T)],  \label{VDl}
\end{align}
where $\L_\Phi$ and  $\L_\Phi^+$ are the sets defined in \eqref{Lphi} and \eqref{Lphi+}, respectively.

$\bullet$  An operator {$\widehat T\in \L_\Phi$} is said to be  a {\it solution } of  the problem 
$({\rm D})$ if
$$[-\Phi^*(0_\L,\widehat T)]\cap \wsup ({\rm D})\ne \emptyset.$$
\indent  $\bullet$  We say that ``strong duality holds for the pair $({\rm P})-({\rm D})$'' if 
$$\winf ({\rm P})=\wmax ({\rm D}).$$
 It is worth noting that when $\winf ({\rm P})=\wmax ({\rm D})$, one has  $\wsup ({\rm D})=\wmax ({\rm D})$ (see {Proposition \ref{pro_5hh} (i)}), and hence, $({\rm D})$ attains  at any value from $\wsup ({\rm D})$. 

$\bullet$ We denote by $({\rm P}^L)$  the problem $({\rm P})$ perturbed by a linear operator $L\in \L(X,Y)$.  Then  $({\rm P}^L)$  and     
its corresponding   dual and loose  dual problems,  $({\rm D}^L)$ and   $({\rm D}_\ell^L)$ are\footnote{{Note that $\mathop{\wsup}\limits_{{T\in \L_\Phi }}[-(\Phi-L)^* (0_\L,T)] \ \ =\mathop{\wsup}\limits_{{T\in \L_\Phi}}[-\Phi^* (L,T)]$.}}:
\begin{align*}
({\rm P}^L)&\qquad \mathop{\winf}\limits_{x\in X}[\Phi(x,0_Z)-L(x)],\\
({\rm D}^L)&\qquad\mathop{\wsup}\limits_{{T\in \L_\Phi}}[-\Phi^* (L,T)],  \\
({\rm D}_\ell^L)&\qquad\mathop{\wsup}\limits_{{T\in \L_\Phi^+}}[-\Phi^* (L,T)]. 
\end{align*}
\indent $\bullet$  Let $\emptyset \ne \V\subset \L(X,Y)$. It is said that {\it $\V$-stable strong duality holds for the pair  $({\rm P})-({\rm D})$} if the strong duality holds for the pair $({\rm P}^L)-({\rm D}^L)$ for all $L\in \V$. 
 when $\V = \L(X,Y)$, we simply said that  stable strong  duality holds for the pair  $({\rm P})-({\rm D})$.  When $\V = \{0_\L\}$,  $\V$-stable  strong duality is  none other than the usual   strong duality. However, as in Section 3, for the sake of simplicity, we consider in details  only the stable strong duality 
 (i.e., when $\V = \L(X,Y)$) and strong duality (when $\V = \{ 0_{\L} \}$). Statements   on $\V$-stable strong duality are in the similar way as in  
 Remark \ref{rem3end}. 

$\bullet$ The notions of solutions of  the loose   dual problem   $({\rm D}_\ell^L)$,      and   $\V$-stable  strong duality  for  the pair $({\rm P})-({\rm D}_\ell)$  are  defined in the same  way.

 \begin{example} Consider the vector optimization problem
$$({\rm P}^1)\quad \winf \{(x,x^2+2x): x\le 0\}. $$
Let $X = Z = \RR$, $Y = \RR^2$, $S=\mathbb{R}_+$ and $K=\mathbb{R}^2_+$,  and set the perturbation mapping $\Phi\colon \RR\times \RR\to \RR^2$ defined by 
$$\Phi(x,z)=\begin{cases}
(x,x^2+2x+z),&\textrm{if } 2x+z\le 0, \\
+\infty_{\mathbb{R}^2},& \textrm{otherwise}.
\end{cases}$$
It is clear that $\winf ({\rm P}^1)=\mathop{\winf}\limits_{x\in \mathbb{R}}\Phi(x,0)$, and as   
 $\L(X,Y)\cong \mathbb{R}^2$ ,  $\L(Z,Y)\cong \mathbb{R}^2$,   $\L_+(S,K)\cong \mathbb{R}^2_+$,     we have $\Phi^* : \RR^2 \times \RR^2 \rightrightarrows \RR^2$ 
and with $L:=(a,b)\in \RR^2$,  $T:=(c,d)\in  \mathbb{R}^2$, it is easy to see that
\begin{align*}
\Phi^*(L,T)=
&=\wsup[\{((a\!-\!2c\!-\!1)x,(b\!-\!2d)x-x^2): x\in \mathbb{R}\}+\{(cu,(d\!-\!1)u): u\le 0\}]. 
\label{neweqa}
\end{align*}
The last equality leads to (see Proposition \ref{pro_decomp} (iii))
$$\Phi^\ast(L,T)=\wsup\Big[\big\{\big((a-2c-1)x,(b-2d)x-x^2\big): x\in \mathbb{R}\} +\wsup  \big\{(cu,(d-1)u):u\le 0 \big\}\Big],  $$
and   the dual  and the loose dual problems for 
$({\rm P}^1)$ read as 
\begin{align*}
({\rm D}^1)\quad &\mathop{\wsup}_{c\ge 0 \textrm{ or }d\ge 1} \winf \{((2c+1)x,2dx+x^2)-(cu,(d-1)u): x\in \mathbb{R}, \; u\le 0\},\\
({\rm D}_\ell^1)\quad &\mathop{\wsup}_{c\ge 0 \textrm{ and }d\ge 0} \winf \{((2c+1)x,2dx+x^2)-(cu,(d-1)u): x\in \mathbb{R}, \; u\le 0\}.
\end{align*}
\end{example}

\begin{theorem}[Weak duality]
 \label{thm_WD}
For any $L\in \L(X,Y)$, $T\in \L(Z,Y)$ it holds:

  $\mathrm{(i)}$\ $-\Phi^* (L,T)  \preccurlyeq_K\winf ({\rm P}^L)$,

 $\mathrm{(ii)}$\ $\wsup ({\rm D}_\ell^L) \preccurlyeq_K   \wsup ({\rm D}^L) \preccurlyeq_K\winf ({\rm P}^L).$
\end{theorem}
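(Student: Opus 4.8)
The plan is to derive everything from the basic epigraph characterization \eqref{epiF*} of the conjugate, together with the definition of $\preccurlyeq_K$ and the elementary properties of $\wsup$ and $\winf$ recorded in Proposition \ref{pro_decomp} and Proposition \ref{prop_4gg}.

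For part (i), fix $L\in\L(X,Y)$ and $T\in\L(Z,Y)$. If $T\notin\L_\Phi$ then $\Phi^*(L',T)=\{+\infty_Y\}$ for every $L'$ (in particular for $L$), so $-\Phi^*(L,T)=\{-\infty_Y\}$ and the relation $\{-\infty_Y\}\preccurlyeq_K\winf({\rm P}^L)$ holds trivially. So assume $T\in\L_\Phi$. Recall $\winf({\rm P}^L)=\winf\{\Phi(x,0_Z)-L(x):x\in X\}$. By Proposition \ref{prop_4gg}(ii) we have $M\preccurlyeq_K\wsup M$ and, passing through Remark \ref{rem_1hh}, also $\winf M\preccurlyeq_K M$ for $M=\{\Phi(x,0_Z)-L(x):x\in X\}$; hence it suffices to show $-\Phi^*(L,T)\preccurlyeq_K\{\Phi(x,0_Z)-L(x)\}$ for each $x\in X$, i.e. (using \eqref{eq_6.33a}) that $\Phi(x,0_Z)-L(x)\not<_K v$ for every $v\in-\Phi^*(L,T)$. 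Equivalently, writing $v=-w$ with $w\in\Phi^*(L,T)$, we must show $\Phi(x,0_Z)-L(x)+w\notin-\inte K$ for all $x\in X$ and all $w\in\Phi^*(L,T)$. Now $\Phi^*(L,T)=\wsup\{L(x')+T(z')-\Phi(x',z'):(x',z')\in X\times Z\}$, so by Proposition \ref{prop_4gg}(ii) again, each $w\in\Phi^*(L,T)$ satisfies $L(x')+T(z')-\Phi(x',z')\not<_K w$ for all $(x',z')$; taking $x'=x$, $z'=0_Z$ gives exactly $\Phi(x,0_Z)-L(x)+w\notin-\inte K$. This proves (i). (Alternatively, one can invoke \eqref{epiF*} directly: $(L,T,w)\in\epi\Phi^*$ for $w$ on the boundary, but the $\wsup$-definition argument above is cleanest.)

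For part (ii), the second inequality $\wsup({\rm D}^L)\preccurlyeq_K\winf({\rm P}^L)$ follows from (i) combined with Proposition \ref{prop_4gg}(iv): we have $-\Phi^*(L,T)\preccurlyeq_K\winf({\rm P}^L)$ for every $T\in\L_\Phi$, hence the union $\bigcup_{T\in\L_\Phi}[-\Phi^*(L,T)]\preccurlyeq_K\winf({\rm P}^L)$ (the defining inequality in \eqref{eq_6.33a} is preserved under unions on the left), and then taking $\wsup$ of the left-hand set preserves $\preccurlyeq_K$ against the right-hand set by Proposition \ref{prop_4gg}(iv) — note $\winf({\rm P}^L)$ is already a $\winf$, so $\wsup$ of something $\preccurlyeq_K$-below it stays below it. The first inequality $\wsup({\rm D}_\ell^L)\preccurlyeq_K\wsup({\rm D}^L)$ is immediate from $\L_\Phi^+\subseteq\L_\Phi$: the set $\bigcup_{T\in\L_\Phi^+}[-\Phi^*(L,T)]$ is contained in $\bigcup_{T\in\L_\Phi}[-\Phi^*(L,T)]$, so by Proposition \ref{prop_4gg}(iii), $\wsup$ of the smaller set is $\preccurlyeq_K$ $\wsup$ of the larger.

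The only mild subtlety — and the one point I would be careful about — is the handling of the $+\infty_Y$/$-\infty_Y$ boundary cases (e.g. when $\L_\Phi^+$ or $\L_\Phi$ is empty, so that a $\wsup$ over an empty family is $\{+\infty_Y\}$ by the stated conventions, or when some $\Phi^*(L,T)=\{+\infty_Y\}$), and making sure the monotonicity facts of Proposition \ref{prop_4gg} are applied only where they are valid (they are stated on $\mathcal{P}_0(Y^\bullet)$, possibly with the $\{\pm\infty_Y\}$ exclusions in part (i), which is exactly why the ``$T\notin\L_\Phi$'' case in part (i) is dispatched separately and by hand). I do not anticipate any genuine obstacle: the whole statement is a formal consequence of the weak-sup/weak-inf calculus plus \eqref{epiF*}.
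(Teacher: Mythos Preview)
Your proposal is correct and follows essentially the same route as the paper: establish $-\Phi^*(L,T)\preccurlyeq_K N$ (with $N=\{\Phi(x,0_Z)-L(x):x\in X\}$) by reading off the defining property of $\wsup$ at $(x,0_Z)$, then upgrade to $-\Phi^*(L,T)\preccurlyeq_K\winf N$ and handle part (ii) via $\L_\Phi^+\subset\L_\Phi$ and Proposition~\ref{prop_4gg}(iii),(iv). Your explicit treatment of the degenerate case $T\notin\L_\Phi$ is a nice bit of care that the paper leaves implicit.

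One point to tighten: in part (i) your ``hence it suffices'' step is not justified by the relation $\winf M\preccurlyeq_K M$ you quote --- $\preccurlyeq_K$ is \emph{not} transitive on arbitrary subsets of $Y^\bullet$, so you cannot chain $-\Phi^*(L,T)\preccurlyeq_K M$ with $\winf M\preccurlyeq_K M$ (wrong direction anyway). The paper closes this exactly as you do later in (ii): from $-\Phi^*(L,T)\preccurlyeq_K N$, Proposition~\ref{prop_4gg}(iv) gives $\wsup(-\Phi^*(L,T))\preccurlyeq_K\winf N$, and then Proposition~\ref{pro_5hh}(v) gives $\wsup(-\Phi^*(L,T))=-\Phi^*(L,T)$ since $-\Phi^*(L,T)\in\mathcal{P}_p(Y)^\bullet$. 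Cite those two facts in (i) and the argument is complete.
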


\begin{proof} Firstly, let us denote 
\begin{equation}
\label{defsetMN}
M:=\bigcup_{T\in \L_\Phi } \big(-\Phi^* (L,T)\big), \qquad N:=\{\Phi (x,0_Z)-L(x): x\in X\}.
\end{equation}
Then, it is easy to see that $ \wsup ({\rm D}^L)=\wsup M$ and $\winf ({\rm P}^L)=\winf N$.

(i) Take $\bar x\in X$ and $\bar y\in-\Phi^*(L,T)$.
As $-\bar y\in \Phi^\ast(L,T)=\wsup \{L(x)+T(z)-\Phi(x,z): (x,z)\in X\times Z\}$, by   Proposition \ref{pro_decomp} (v)  one gets
$$-\bar y\notin L(x)+T(z)-\Phi(x,z)-\inte K,\quad \forall (x,z)\in X\times Z, $$
and hence, with  $x=\bar x$ and  $z = 0_Z$,  one has  $\bar y\notin \Phi(\bar x,0_Z)-L(\bar x)+\inte K, $ or equivalently, $\Phi(\bar x,0_Z)-L(\bar x) \not<_K \bar y$. So, 
$-\Phi^* (L,T)  \preccurlyeq_KN$.  Consequently, by Propositions \ref{pro_5hh}(v) and  \ref{prop_4gg}(iv), $-\Phi^* (L,T)=\wsup[ -\Phi^* (L,T)]\preccurlyeq_K   \winf N= \winf ({\rm P}^L)$.

(ii) 
 As $\L_\Phi^+\subset \L_\Phi$, it follows from Proposition \ref{prop_4gg}(iii) 
that 
$\wsup ({\rm D}_\ell^L) \preccurlyeq_K\wsup ({\rm D}^L)$.
   For the last inequality, it follows from (i) that $M\preccurlyeq_K\winf N$, and hence, using  Propositions \ref{pro_5hh}(v),   \ref{prop_4gg}(iv) again, one gets $  \wsup ({\rm D}^L)=\wsup M \preccurlyeq_K\winf[\winf ({\rm P}^L)]=\winf ({\rm P}^L)$. 
  \end{proof}

\begin{theorem}[Characterizations  of  {stable} strong duality for (VP)]
\label{thm_9kk}
  Consider the following statements: 

$\mathrm{(a)}$  $\epi \Phi(.,0_Z)^*=\mathcal{M},$

$\mathrm{(b)}$ {$\epi \Phi(.,0_Z)^*= \mathcal{M}_+,$}

 $\mathrm{(e)}$    The {stable} strong duality holds for the pair  $({\rm P})-({\rm D})$, 
 
$\mathrm{(f)}$  The {stable}  strong duality holds for the pair  $({\rm P})-({\rm D}_\ell)$.

Then $[\rm(a) \! \Leftrightarrow \!\rm(e)]$ and $[\rm(b) \!  \Leftrightarrow\! \rm(f)]$. 
\end{theorem}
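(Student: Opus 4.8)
The plan is to exploit the already-established ``Principles for stable vector Farkas lemmas'' (Theorem~\ref{thm_PFL1}) together with the weak duality inequalities (Theorem~\ref{thm_WD}), so that the characterization of stable strong duality reduces to the epigraph identities $\epi\Phi(.,0_Z)^*=\mathcal{M}$ and $\epi\Phi(.,0_Z)^*=\mathcal{M}_+$. Since $[\rm(a)\Leftrightarrow\rm(e)]$ and $[\rm(b)\Leftrightarrow\rm(f)]$ have completely parallel proofs (replacing $\L_\Phi$ by $\L_\Phi^+$, $(\rm D)$ by $(\rm D_\ell)$, $\mathcal{M}$ by $\mathcal{M}_+$, and using $(\gamma)$ instead of $(\beta)$ in Theorem~\ref{thm_PFL1}), I would prove $[\rm(a)\Leftrightarrow\rm(e)]$ in detail and then remark that the second equivalence follows \emph{mutatis mutandis}. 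Throughout I set, for a fixed $L\in\L(X,Y)$,
\[
M_L:=\bigcup_{T\in\L_\Phi}\bigl(-\Phi^*(L,T)\bigr),\qquad N_L:=\{\Phi(x,0_Z)-L(x):x\in X\},
\]
so that $\wsup(\rm D^L)=\wsup M_L$ and $\winf(\rm P^L)=\winf N_L$, and recall from Theorem~\ref{thm_WD} that $\wsup M_L\preccurlyeq_K\winf N_L$ always holds.

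\textbf{Direction $\rm(a)\Rightarrow\rm(e)$.} Assume $\epi\Phi(.,0_Z)^*=\mathcal{M}$; by Theorem~\ref{thm_PFL1} statement $\rm(c)$ holds, i.e.\ for every $(L,y)$ the equivalence $(\alpha)\Leftrightarrow(\beta)$ is valid. Fix $L\in\L(X,Y)$; I must show $\winf N_L=\wmax(\rm D^L)$. By weak duality it suffices to produce, for each weakly infimal element $\bar v\in\winf N_L\cap Y$, an operator $\bar T\in\L_\Phi$ with $\bar v\in-\Phi^*(L,\bar v\text{-shifted})$... more precisely I use Proposition~\ref{pro_decomp}: pick $\bar v\in\winf(\rm P^L)$; then by the decomposition of $Y$ and Remark~\ref{rem_1hh}, $\bar v\notin N_L+\inte K$, which rewrites as $\Phi(x,0_Z)-L(x)+(-\bar v)\notin-\inte K$ for all $x$ after a sign bookkeeping via \eqref{epiF*} — so $(L,-\bar v)\in\epi\Phi(.,0_Z)^*$, hence $(\alpha)$ holds with $y:=-\bar v$. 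Applying $(\alpha)\Rightarrow(\beta)$ gives $\bar T\in\L_\Phi$ with $\Phi(x,z)-L(x)-\bar T(z)-\bar v\notin-\inte K$ for all $(x,z)$, i.e.\ $-\bar v\in\epi$-relation for $\Phi^*(\cdot,\bar T)$ at $L$; combined with the ``minimality half'' of $\bar v\in\winf N_L$ and $-\Phi^*(L,\bar T)\preccurlyeq_K\winf(\rm P^L)$ from Theorem~\ref{thm_WD}(i), this forces $\bar v\in-\Phi^*(L,\bar T)\subset M_L$, so $\bar v\in\wmax(\rm D^L)$. Since $\bar v\in\winf(\rm P^L)$ was arbitrary and $\wsup(\rm D^L)\preccurlyeq_K\winf(\rm P^L)$, a short argument using Proposition~\ref{pro_5hh}(v) and the anti-symmetry of $\preccurlyeq_K$ on $\mathcal{P}_p(Y)^\bullet$ upgrades this to $\winf(\rm P^L)=\wsup(\rm D^L)=\wmax(\rm D^L)$.

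\textbf{Direction $\rm(e)\Rightarrow\rm(a)$.} Assume stable strong duality. By Proposition~\ref{pro_incluepi} the inclusion $\mathcal{M}\subset\epi\Phi(.,0_Z)^*$ is free, so I only need $\epi\Phi(.,0_Z)^*\subset\mathcal{M}$. Take $(\bar L,\bar y)\in\epi\Phi(.,0_Z)^*$; by \eqref{epiF*} this is exactly $(\alpha)$ for the pair $(\bar L,\bar y)$, which says $\bar y\notin N_{\bar L}-L(\cdot)\ldots$, i.e.\ $-\bar y\notin N_{\bar L}-\inte K$, hence (Proposition~\ref{pro_decomp}(v), applied in the $\winf$-form of Remark~\ref{rem_1hh}) either $-\bar y\in\winf(\rm P^{\bar L})$ or $-\bar y\in\winf(\rm P^{\bar L})-\inte K$. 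In either case I can find $\bar v\in\winf(\rm P^{\bar L})$ with $\bar v\leqq_K -\bar y$ or $\bar v<_K-\bar y$. By strong duality for the pair $(\rm P^{\bar L})-(\rm D^{\bar L})$ there is $\bar T\in\L_\Phi$ with $\bar v\in-\Phi^*(\bar L,\bar T)$, i.e.\ $(\bar L,\bar T,-\bar v)$ lies on the boundary of $\epi\Phi^*$; then $(\bar L,\bar T,\bar y)\in\epi\Phi^*$ follows from $\bar v\leqq_K-\bar y$ together with Proposition~\ref{pro_decomp}(v) (the $K$-epigraph is closed under adding $K$, equivalently the $\wsup$-set plus $K$ is the non-``$-\inte K$'' side), so $(\bar L,\bar y)\in\epi\Phi^*(\cdot,\bar T)\subset\mathcal{M}$. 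Finally I apply Theorem~\ref{thm_PFL1} once more: $\epi\Phi(.,0_Z)^*=\mathcal{M}$ together with $\rm(a)\Leftrightarrow\rm(c)$ is what was claimed, so in fact it is cleaner to route the whole argument through statement $\rm(c)$ of Theorem~\ref{thm_PFL1} rather than re-deriving the Farkas equivalence by hand.

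\textbf{Main obstacle.} The delicate point is not the set-algebra but the bookkeeping between ``a weakly infimal element of $\winf(\rm P^L)$'' and ``membership in $\epi\Phi(.,0_Z)^*$ at a shifted argument,'' i.e.\ making precise that strong duality $\winf(\rm P^L)=\wmax(\rm D^L)$ is \emph{equivalent} to the implication $(\alpha)\Rightarrow(\beta)$ holding for all $y$, and not merely implied by it. This requires careful use of Proposition~\ref{pro_decomp}(v)–(vi) (the trichotomy $Y=(N-\inte K)\cup\winf N\cup(\winf N+\inte K)$ and $\winf N=\cl(N+\inte K)\setminus(N+\inte K)$) to translate between points $y\in Y$ that are \emph{not} dominated by $\winf(\rm P^L)$ from below and the actual weakly infimal elements realized as $-\Phi^*(L,T)$; the realization of the value as $\wmax(\rm D^L)$ (not just $\wsup$) then uses the observation recorded right after the definition of strong duality, namely that $\winf(\rm P)=\wmax(\rm D)$ forces $\wsup(\rm D)=\wmax(\rm D)$ via Proposition~\ref{pro_5hh}(i). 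Once this translation is set up, both directions and both equivalences are short.
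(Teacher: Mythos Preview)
Your overall strategy coincides with the paper's: use Theorem~\ref{thm_PFL1} to pass between the epigraph identity and the Farkas equivalence $(\alpha)\Leftrightarrow(\beta)$, and then translate ``$\bar v$ is a weakly infimal element of $N_L$'' into ``$(L,-\bar v)\in\epi\Phi(.,0_Z)^*$'' via the decomposition in Proposition~\ref{pro_decomp}(v)/Remark~\ref{rem_1hh}, exactly as the paper does. The key step you call ``this forces $\bar v\in-\Phi^*(L,\bar T)$'' is precisely the paper's combination $-\bar v\in\Phi^*(L,\bar T)+K$ (from $(\beta)$) together with $-\bar v\notin\Phi^*(L,\bar T)+\inte K$ (from weak duality), and then Proposition~\ref{pro_5hh}(ii).

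Two points to tighten. First, in $\rm(e)\Rightarrow\rm(a)$ your signs are reversed: from $(\bar L,\bar y)\in\epi\Phi(.,0_Z)^*$ one gets $-\bar y\notin N_{\bar L}+\inte K$ (not $-\inte K$), and the decomposition then gives $-\bar y\in\winf N_{\bar L}-K$, i.e.\ there is $\bar v\in\winf(\rm P^{\bar L})$ with $-\bar y\leqq_K\bar v$ (not $\bar v\leqq_K-\bar y$); this orientation is what makes $\bar y\in-\bar v+K\subset\Phi^*(\bar L,\bar T)+K$ and hence $(\bar L,\bar y)\in\epi\Phi^*(\cdot,\bar T)$. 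Second, the paper explicitly disposes of the case $\winf(\rm P^L)=\{-\infty_Y\}$ (via weak duality) before picking $\bar v\in Y$; you should do the same rather than tacitly restricting to $\winf N_L\cap Y$.
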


\begin{proof}$[\rm(a)\!\Rightarrow\!(e)]$ Assume that $\rm(a)$ holds and let {$L\in \L(X,Y)$}. We will show that 
\begin{equation}\label{eq_26f}
\winf ({\rm P}^L)=\wmax ({\rm D}^L).
\end{equation}
Firstly, as $0_Z\in \pi(\dom \Phi)$,   there is $\tilde x\in X$ such that $\Phi(\tilde x,0_Z)\in Y$, and consequently, \\ $\Phi(\tilde x,0_Z)-L(\tilde x) \in Y$, and  so $\winf ({\rm VP}^L)\ne\{+\infty_Y\}$. 

If  $\winf ({\rm P}^L)=\{-\infty_Y\}$  then, by Theorem \ref{thm_WD},   $\wsup ({\rm D}^L)=\{-\infty_Y\}$, and so,  
$-\Phi^*(L,T)=\{-\infty_Y\}$ for  $T\in \L_\Phi$. Consequently,  
$\wmax ({\rm D}^L)=\{-\infty_Y\}=\winf ({\rm P}^L), $
and \eqref{eq_26f} holds. 
  
Assume now that $\winf ({\rm P}^L)\subset Y$ and  we will show that 
\begin{equation} \label{eq-newa}
\winf ({\rm P}^L) \subset  \wmax ({\rm D}^L).
\end{equation}

$\bullet$
 Let $M$ and $N$ be the sets as in \eqref{defsetMN}  (in  the proof of Theorem \ref{thm_WD}).  Then,  it follows from Theorem \ref{thm_WD} (i) that 
\begin{equation} \label{eqthm62a}
M \preccurlyeq_K\winf ({\rm P}^L) =\winf N.  
\end{equation}

$\bullet$ Take $y\in \winf ({\rm P}^L)=\winf N$, by Proposition \ref{pro_decomp}(v)     (also,   Remark \ref{rem_1hh})  
$y\notin \Phi(x,0_Z)-L(x)+\inte K$ for all $x\in X,$
or equivalently,
\begin{equation} \label{eqnewb} 
\Phi(x,0_Z)-L(x)  - y \notin -\inte K,\quad\forall x\in X.\end{equation} 
Now, as $\rm(a)$ holds,  it follows from Theorem \ref{thm_PFL1}, that \eqref{eqnewb} is equivalent to the fact that 
  there is $\bar T\in L(Z,Y)$ such that  $\bar T  \in \L_\Phi$ and
$\Phi(x,z)-L(x)-\bar T(z)  - y \notin -\inte K$ for all $(x,z)\in X\times Z, $
which is equivalent to (by Proposition \ref{pro_decomp}(iv)) 
\begin{equation}
\label{eq_33hh}
-y\notin \big\{ L(x)+\bar T(z)-\Phi(x,z):  (x,z)\in X\times Z   \big\} -\inte K = \Phi^\ast (L, \bar T) - \inte K.
\end{equation}
Now as  $\Phi^\ast (L, \bar T)   \in \P_p(Y)$, it follows from
  Proposition \ref{pro_5hh} (iii)   that  
  $Y=\big(\Phi^\ast (L, \bar T)     -\inte K\big)\cup \big( \Phi^*(L, \bar T)+K\big), $
which, combining with \eqref{eq_33hh},  yields 
\begin{equation}
\label{eq_34hh}
-y\in \Phi^*(L,\bar T)+K.
\end{equation}

On the other hand, as  $M \preccurlyeq_K \winf N$  (see \eqref{eqthm62a})   and $y \in \winf N$, one has  
$y\not<_K  y'$ for all $y'\in -\Phi^*(L,\bar T)$, yielding  $-y\notin \Phi^*(L,\bar T)+\inte K$.  From   \eqref{eq_34hh} and   Proposition \ref{pro_5hh}(ii), 
$$-y\in (\Phi^*(L,\bar T)+K)\setminus (\Phi^*(L,\bar T)+\inte K)=\Phi^*(L,\bar T), $$
and hence,  $y \in - \Phi^*(L,\bar T)    \subset M$.

$\bullet$
We have shown that  if  $y \in \winf ({\rm P}^L)$  then $y \in M$. Again,  as $M \preccurlyeq_K\winf ({\rm P}^L)$ (see \eqref{eqthm62a}) one has   $y\not<_K y^\prime $ for any $y^\prime \in M$.      So, by  definition of weak maximum,  $y\in \wmax M=\wmax ({\rm D}^L)$, and by 
 the arbitrariness of $y  \in \winf ({\rm P}^L)$, \eqref{eq-newa} has been proved and then 
\begin{equation} \label{eqnewc} \winf ({\rm P}^L)\subset \wmax ({\rm D}^L)\subset \wsup ({\rm D}^L).
\end{equation}
\item Now, as  $\winf ({\rm P}^L), \wsup ({\rm D}^L)\in \mathcal{P}_p(Y)^\bullet$,  Proposition \ref{pro_5hh}(i) yields 
$\winf ({\rm P}^L)= \wsup ({\rm D}^L),$ and we get from  \eqref{eqnewc} that  $\winf ({\rm P}^L)= \wmax ({\rm D}^L),$ which is $\rm (e)$.

$[\rm(e)\!\Rightarrow\!(a)]$ Assume that $\rm(e)$ holds, i.e., \eqref{eq_26f} holds for all {$L\in \L(X,Y)$}. We will show that $\rm(a)$ holds. For this, taking   Proposition \ref{pro_incluepi} into account, it suffices  to show that
\begin{equation}\label{eq_35iii}
{\epi \Phi(.,0_Z)^*\subset\mathcal{M}.}
\end{equation}

Take {$(L,y)\in \epi \Phi(.,0_Z)^*$}. Then   by \eqref{epiF*},  
$\Phi(x,0_Z)-L(x)   + y \notin -\inte K$ for all  $x\in X,$
or equivalently,
$-y\notin N+\inte K$ (where   $N$ is defined in \eqref{defsetMN}),  which yields $\winf N\ne \{-\infty_Y\}$ (see Proposition \ref{pro_decomp}(i) and Remark \ref{rem_1hh}).   
So, $\winf N\subset Y$, and hence, one gets  from   Proposition \ref{pro_decomp}(vi) (see also  Remark \ref{rem_1hh}) that 
$Y= (N+\inte K)\cup \winf N\cup (\winf N-\inte K).    $
Hence (as  $-y\notin N+\inte K$), 
 \begin{equation}\label{eq_35ii}
-y\in \winf N\cup (\winf N-\inte K)\subset \winf N-K=\winf ({\rm VP}^L)-K.
\end{equation}
Now, as $\rm(e)$ holds, for the  given $L$, one has 
$\winf ({\rm P}^L)=\wmax ({\rm D}^L)=\wmax M\subset M, $
(with   $M$ as in \eqref{defsetMN})
which,  together with \eqref{eq_35ii}, yields the existence of $\bar T\in \L_\Phi$ such that $-y\in -\Phi^*(L,\bar T)-K$. The last equation means that $(L,y)\in \epi \Phi^*(.,\bar T)$, and  hence, 
$(L, y) \in  \bigcup_{T\in \L_\Phi }\epi \Phi^*(.,T)  $  and  \eqref{eq_35iii} holds. 
The equivalence  $[\rm (a)\!  \Leftrightarrow\! \rm (e)]$ has been proved. The proof of  $[\rm(b) \!  \Leftrightarrow\! \rm(f)]$ is similar.  \qed
\end{proof}
Further sufficient conditions for the stable  strong duality for $({\rm P})-({\rm D})$,  $({\rm P})-({\rm D}_\ell)$ are  given.  

\begin{theorem}[{Stable} strong  duality I]
\label{thm_12kk}
Assume that $\Phi$ is a $K$-convex mapping and that the  condition $(C_1)$  holds. Then,  {stable} strong  duality holds for the pair $({\rm P})-({\rm D})$.
Moreover, if, in addition that   $(C_{0})$ holds,   then  {stable} strong duality holds for $({\rm P})-({\rm D}_\ell)$ as well.
\end{theorem}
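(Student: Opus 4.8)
The plan is to deduce Theorem~\ref{thm_12kk} by chaining together the $1^{st}$ Representation Theorem (Theorem~\ref{thm_nonasymptotic_representing_epi_2}) with the characterization of stable strong duality (Theorem~\ref{thm_9kk}). The core observation is that all three statements are organized in parallel: the representation $\epi\Phi(.,0_Z)^*=\mathcal{M}$ is equivalent, on one hand, to stable strong duality for $(\mathrm{P})-(\mathrm{D})$ via $[\rm(a)\Leftrightarrow(e)]$ of Theorem~\ref{thm_9kk}, and on the other hand this same representation is exactly what Theorem~\ref{thm_nonasymptotic_representing_epi_2} delivers under $(C_1)$.

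Concretely, first I would invoke Theorem~\ref{thm_nonasymptotic_representing_epi_2}: since $\Phi$ is assumed $K$-convex (this is the standing hypothesis of the theorem) and $(C_1)$ holds, we obtain equation~\eqref{eq_18d}, i.e. $\epi\Phi(.,0_Z)^*=\mathcal{M}$. This is precisely statement $\rm(a)$ in Theorem~\ref{thm_9kk}. Then, applying the implication $[\rm(a)\Rightarrow(e)]$ of Theorem~\ref{thm_9kk}, we conclude that the stable strong duality holds for the pair $(\mathrm{P})-(\mathrm{D})$, which is the first assertion.

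For the second assertion, suppose in addition that $(C_0)$ holds. Then the second part of Theorem~\ref{thm_nonasymptotic_representing_epi_2} gives~\eqref{eq_18ddlbis}, in particular $\epi\Phi(.,0_Z)^*=\mathcal{M}_+$, which is statement $\rm(b)$ in Theorem~\ref{thm_9kk}. Applying $[\rm(b)\Rightarrow(f)]$ of that theorem yields the stable strong duality for the pair $(\mathrm{P})-(\mathrm{D}_\ell)$, completing the proof.

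Since the substantive analytic work — the separation argument, the construction of the operator $\bar T\in\L_\Phi$, and the refinement $\bar T\in\L_\Phi^+$ under $(C_0)$ — has already been carried out inside Theorem~\ref{thm_nonasymptotic_representing_epi_2}, and the primal-dual translation has been done in Theorem~\ref{thm_9kk}, there is essentially no obstacle remaining here: the proof is a two-line bookkeeping argument citing the two earlier results. The only thing to double-check is that the hypotheses match exactly — namely that $0_Z\in\pi(\dom\Phi)$ (a standing assumption of Section~4, so that $(\mathrm{P})$ is feasible) and that $(C_1)$ as stated is indeed the hypothesis invoked in Theorem~\ref{thm_nonasymptotic_representing_epi_2} — which it is.
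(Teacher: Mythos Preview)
Your proposal is correct and matches the paper's own proof essentially line for line: the paper likewise invokes Theorem~\ref{thm_nonasymptotic_representing_epi_2} to obtain condition~(a) (respectively~(b) under $(C_0)$) of Theorem~\ref{thm_9kk}, and then applies that theorem to conclude stable strong duality for $({\rm P})-({\rm D})$ (respectively $({\rm P})-({\rm D}_\ell)$).
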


\begin{proof} When  $\Phi$ is a $K$-convex mapping and   the  condition $(C_1)$  holds then  by
Theorem \ref{thm_nonasymptotic_representing_epi_2}, the condition (a) from  Theorem 
\ref{thm_9kk} holds, and hence, by this theorem  the {stable} strong  duality holds for the pair $({\rm P})-({\rm D})$.  If, in addition that   $(C_{0})$ holds  then (b) of Theorem  \ref{thm_9kk} holds, and again, by this theorem the   {stable} strong duality holds for the pair $({\rm P})-({\rm D}_\ell)$. 
\end{proof}

\begin{theorem}[Stable strong  duality II]
\label{thm_14kk}  
Assume that $\Phi$ is a $K$-convex mapping. The following assertions hold:

  $\mathrm{(i)}$\  If $(C_7)$ holds then stable strong  duality holds for the  pairs $({\rm P})-({\rm D})$ and  $({\rm P})-({\rm D}_\ell)$,  
  
  $\mathrm{(ii)}$\  If at least one of  the  conditions $(C_2)$, $(C_3)$, $\ldots$,  $(C_6)$ holds   then    stable strong  duality holds for  $({\rm P})-({\rm D})$. If, in addition that $(C_0)$ holds then   stable strong  duality holds for  $({\rm P})-({\rm D}_\ell)$ as well.
 \end{theorem}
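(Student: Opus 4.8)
The plan is to reduce Theorem \ref{thm_14kk} to the already-established machinery, exactly as Theorem \ref{thm_12kk} was reduced to Theorem \ref{thm_nonasymptotic_representing_epi_2} and Theorem \ref{thm_9kk}. The key observation is that Theorem \ref{thm_9kk} gives the bridge $[\rm(a)\Leftrightarrow\rm(e)]$ and $[\rm(b)\Leftrightarrow\rm(f)]$, where $\rm(a)$ is the first representation $\epi\Phi(.,0_Z)^*=\mathcal{M}$, $\rm(b)$ is the stronger representation $\epi\Phi(.,0_Z)^*=\mathcal{M}_+$, and $\rm(e),\rm(f)$ are precisely stable strong duality for the pairs $({\rm P})-({\rm D})$ and $({\rm P})-({\rm D}_\ell)$. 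So the entire content of the theorem is to supply, under the hypotheses listed, the relevant representation of $\epi\Phi(.,0_Z)^*$.

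For part $\mathrm{(i)}$: assuming $\Phi$ is $K$-convex and $(C_7)$ holds (together with $\inte S\neq\emptyset$, which I would either carry as a standing hypothesis of the theorem or note is implied by the role of $(C_7)$ in Theorem \ref{thm_nonasymptotic_representing_epi}), Theorem \ref{thm_nonasymptotic_representing_epi} delivers \eqref{eq_18ddlbis}, i.e. $\epi\Phi(.,0_Z)^*=\mathcal{M}=\mathcal{M}_+$. In particular both $\rm(a)$ and $\rm(b)$ of Theorem \ref{thm_9kk} hold, so that theorem yields both $\rm(e)$ and $\rm(f)$, which is exactly stable strong duality for $({\rm P})-({\rm D})$ and for $({\rm P})-({\rm D}_\ell)$.

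For part $\mathrm{(ii)}$: assuming $\Phi$ is $K$-convex and at least one of $(C_2),(C_3),\ldots,(C_6)$ holds, Theorem \ref{cor_nonasymptotic_representing_epi_3} gives \eqref{eq_18d}, i.e. $\rm(a)$ of Theorem \ref{thm_9kk}; hence by that theorem $\rm(e)$ holds, which is stable strong duality for $({\rm P})-({\rm D})$. If, in addition, $(C_0)$ holds, the same Theorem \ref{cor_nonasymptotic_representing_epi_3} upgrades the conclusion to \eqref{eq_18ddlbis}, i.e. $\rm(b)$ of Theorem \ref{thm_9kk}; hence $\rm(f)$ holds, which is stable strong duality for $({\rm P})-({\rm D}_\ell)$.

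There is essentially no obstacle here: the proof is a bookkeeping chain $(C_i)\Rightarrow$ (representation of $\epi\Phi(.,0_Z)^*$) $\Rightarrow$ (stable strong duality), invoking Theorems \ref{cor_nonasymptotic_representing_epi_3}, \ref{thm_nonasymptotic_representing_epi}, and \ref{thm_9kk}. The only point requiring a little care is to make sure the hypothesis $\inte S\neq\emptyset$ needed to invoke Theorem \ref{thm_nonasymptotic_representing_epi} in part $\mathrm{(i)}$ is available — it should be stated explicitly in the hypotheses of part $\mathrm{(i)}$ (or folded into the definition of $(C_7)$), mirroring the statement of Theorem \ref{thm_nonasymptotic_representing_epi}. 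With that, the proof is a two-line citation for each part, entirely parallel to the proof of Theorem \ref{thm_12kk}.
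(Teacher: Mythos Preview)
Your proposal is correct and follows exactly the same approach as the paper: invoke Theorems \ref{cor_nonasymptotic_representing_epi_3} and \ref{thm_nonasymptotic_representing_epi} to obtain the representations \eqref{eq_18d} or \eqref{eq_18ddlbis}, then apply Theorem \ref{thm_9kk} to conclude stable strong duality. Your remark about the implicit hypothesis $\inte S\neq\emptyset$ in part~$\mathrm{(i)}$ is well taken; the paper's own proof is a one-line citation and leaves that point unstated.
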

\begin{proof}  Similar to the proof of  Theorem \ref{thm_12kk}, using Theorems   \ref{cor_nonasymptotic_representing_epi_3}, \ref{thm_nonasymptotic_representing_epi}   
and Theorem  \ref{thm_9kk}.  
\end{proof}


\section{Composite Vector  Problems: Lagrange and Fenchel-Lagrange duality} 

In this section we will apply general principles/results  on vector inequalities and on  duality in the previous sections  to some special  classes of vector problems.  We consider a  composed vector optimization problem with a cone constraint and a set constraint:  
\begin{align*}
({\rm CCVP})&\qquad \winf \{F(x)+(\kappa \circ H)(x): \quad x\in C,\; G(x)\in-S\},
\end{align*}
where  $X, Y, Z,W$ are locally convex Hausdorff topological vector spaces, $P$ and $S$ are nonempty convex cone in $W$ and $Z$, respectively,  $F\colon X\to Y^\infty$, $\kappa\colon W\to Y^\infty$ (by convention, $\kappa(+\infty_{W})=+\infty_Y$),  $H\colon X\to W^\infty$, 
 and $G\colon X\to Z^\infty$ are proper mappings, $C$ is a
nonempty subset of $X$.  Let  $A:=C\cap G^{-1}(-S)$ and assume along this section that 
\begin{equation}
\label{eq:42_newew}
A\cap \dom F\cap H^{-1}(\dom \kappa)\neq\emptyset.
\end{equation}  

In order to apply the results obtained in the previous sections to establishing duality results for (CCVP), we need to define suitable perturbation mappings $\Phi $ for (CCVP). 
Concretely, we will construct three typical perturbation mappings $\Phi_1, \Phi_2$, and $\Phi_3$, 
which give rise to variant dual problems and duality results for (CCVP). Another perturbation mapping, namely $\Phi_4$, also is introduced in Remark \ref{rem712} together  with  the corresponding dual problems.

\medskip

\textit{\textbf{The First  Perturbation Mapping $\boldsymbol{\Phi_1}$: Lagrange duality. }}  Take $\tilde Z:=W\times Z$ as the space of  perturbation variables and define the perturbation mapping $\Phi_1\colon X\times \tilde Z\to Y^\infty$, 
\begin{equation}
\label{phi1}
\Phi_1(x,w,z)=\begin{cases} F(x)+\kappa(H(x)+w), & \textrm{if } x\in C \textrm{ and } G(x)+z\in -S, \\
+\infty_Y,& \textrm{otherwise}.
\end{cases}
\end{equation}
We consider the cone $\tilde S:=P\times S$ in $\tilde Z$. Observe that  $\L_+(\tilde S,K)\cong\L_+(P,K)\times \L_+(S,K)$.

Firstly, one has $\Phi_1(x,0_{W}, 0_{Z})=F(x)+(\kappa \circ H)(x)+I_A(x)$ for all $x\in X$,  and 
 \begin{gather*} 
\dom \Phi_1=\left\{(x,w,z)\in X\times W\times Z: 
x\in \tilde C,\; w\in -H(x)+\dom \kappa,\; z\in -G(x)-S
\right\},    
  \end{gather*} 
and so, 
$  \pi_1(\dom \Phi_1)  =-\left\{(H(x),G(x)): x\in \tilde C\right\}+\dom\kappa \times (-S),$ where $\pi_1\colon X\times \tilde Z\to \tilde Z$ defined by $\pi_1(x,w,z)=(w,z)$
and $\tilde C:=C\cap \dom F\cap \dom G\cap\dom H$.
 As \eqref{eq:42_newew} holds,  one has  $(0_{W},0_{Z})\in \pi_1 (\dom \Phi_1)$.

\begin{lemma}
\label{lem:6.1_nwewewE}

 $\mathrm{(i)}$\  For all $L\in \L(X,Y)$ and $T:=(T_1,T_2)\in \L(W,Y)\times \L(Z,Y)$, it holds
\begin{equation}
\label{eq:45nwewE}
\Phi_1^*(L,T)=(F+T_1\circ H+T_2\circ G+I_{C})^*(L)\uplus\kappa^\ast(T_1)\uplus I^*_{-S}(T_2).
\end{equation}
Moreover,  if $T_2\in \L_+(S,K)$ then
$\Phi_1^*(L,T)=(F+T_1\circ H_1+T_2\circ H_2+I_{C})^*(L)\uplus\kappa^\ast(T_1), $\\ 
\indent  $\mathrm{(ii)}$\  It holds:
\begin{align*}
\bigcup_{T\in {\L_{\Phi_1}}}\epi \Phi^*_1(.,T)
&=\bigcup_{\substack{T_1\in \dom \kappa^\ast\\ T_2\in \L_+^w(S,K)}}
\Psi(\exepi (F+T_1\!\circ\! H+T_2\!\circ\! G+I_{C})^\ast\boxplus (0_\L, \kappa^\ast(T_1)\uplus I_{-S}^\ast(T_2)),\\
\bigcup_{T\in {\L_{\Phi_1}^+}}\epi \Phi^*_1(.,T)
&=\bigcup_{\substack{T_1\in \dom \kappa^\ast\cap\L_+(P,K)\\ T_2\in \L_+(S,K)}}
\Psi(\exepi (F+T_1\!\circ\! H+T_2\!\circ\! G+I_{C})^\ast\boxplus (0_\L, \kappa^\ast(T_1)).
\end{align*}
\end{lemma}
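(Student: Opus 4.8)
The plan is to prove (i) by directly evaluating the $\wsup$ that defines $\Phi_1^*$ after a change of the perturbation variables that disentangles $H$ and $G$ from the inner constraints, and then to deduce (ii) by rewriting the $\uplus$-formula from (i) in extended-epigraph language via Proposition \ref{prop_1ab}(v).

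For (i), I would fix $L\in\L(X,Y)$ and $T=(T_1,T_2)\in\L(W,Y)\times\L(Z,Y)$. By the conventions \eqref{2.4} only points of $\dom\Phi_1$ contribute to the supremum, and on $\dom\Phi_1$ the substitution $w'=H(x)+w$, $z'=G(x)+z$ is a bijection onto $\{(x,w',z'):x\in\tilde C,\ w'\in\dom\kappa,\ z'\in-S\}$; under it $L(x)+T_1(w)+T_2(z)-\Phi_1(x,w,z)$ becomes
\[
\bigl[L(x)-(F+T_1\circ H+T_2\circ G+I_C)(x)\bigr]+\bigl[T_1(w')-\kappa(w')\bigr]+\bigl[T_2(z')-I_{-S}(z')\bigr].
\]
Thus $\Phi_1^*(L,T)$ is the $\wsup$ of a three-fold Minkowski sum, and applying Proposition \ref{pro_decomp}(iii) twice (to pull the $\wsup$ inside each summand) together with Definition \ref{def_3.1zz} and the associativity of $\uplus$ (Proposition \ref{prop_1ab}(iii)) yields \eqref{eq:45nwewE} with the three factors $(F+T_1\circ H+T_2\circ G+I_C)^*(L)$, $\kappa^*(T_1)$, $I_{-S}^*(T_2)$. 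For the refinement, when $T_2\in\L_+(S,K)$ the set $\{T_2(z'):z'\in-S\}=T_2(-S)$ lies in $-K$ and contains $0_Y$; writing \eqref{eq:45nwewE} as $U\uplus I_{-S}^*(T_2)$ with $U:=(F+T_1\circ H+T_2\circ G+I_C)^*(L)\uplus\kappa^*(T_1)$, and using $I_{-S}^*(T_2)=\wsup\{T_2(z):z\in-S\}$, Proposition \ref{pro_decomp}(iii),(vii), and $\wsup U=U$ (Proposition \ref{pro_5hh}(v)), the last factor collapses and the refined formula follows (the symbols $H_1,H_2$ in the statement should read $H,G$).

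For (ii), the key step I would isolate is the identity, valid for every proper $P\colon X\to Y^\bullet$ and every $V\in\mathcal{P}_p(Y)$,
\[
\bigl\{(L,y)\in\L(X,Y)\times Y:\ y\in(P^*(L)\uplus V)+K\bigr\}=\Psi\bigl(\exepi P^*\boxplus\{(0_\L,V)\}\bigr).
\]
Unfolding the right side via \eqref{eq_12abc} and \eqref{Psi}, a pair $(L,y)$ lies in it iff $L\in\dom P^*$ and there is $W\in\mathcal{P}_p(Y)$ with $P^*(L)\preccurlyeq_K W$ and $y\in W\uplus V$, which by Proposition \ref{prop_1ab}(v) is exactly $y\in(P^*(L)\uplus V)+K$ (and the side condition $L\in\dom P^*$ is automatic, since $P^*(L)=\{+\infty_Y\}$ leaves the left-hand set with no pair $(L,y)$, $y\in Y$). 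Taking $P:=F+T_1\circ H+T_2\circ G+I_C$, proper by \eqref{eq:42_newew}, and $V:=\kappa^*(T_1)\uplus I_{-S}^*(T_2)$, and combining with (i) ($\Phi_1^*(L,(T_1,T_2))=P^*(L)\uplus V$), I get $\epi\Phi_1^*(\cdot,(T_1,T_2))=\Psi(\exepi P^*\boxplus\{(0_\L,V)\})$. The last thing to pin down is the index set: $(T_1,T_2)\in\L_{\Phi_1}$ forces $\kappa^*(T_1)\ne\{+\infty_Y\}$ and $I_{-S}^*(T_2)\ne\{+\infty_Y\}$, i.e. $T_1\in\dom\kappa^*$ and $T_2\in\dom I_{-S}^*=\L_+^w(S,K)$ by \eqref{domI*}; conversely, for any such $(T_1,T_2)$ the set $\epi\Phi_1^*(\cdot,(T_1,T_2))$ is either empty (contributing nothing) or non-empty (forcing $(T_1,T_2)\in\L_{\Phi_1}$), so the union over $\L_{\Phi_1}$ equals the union over $\{T_1\in\dom\kappa^*,\ T_2\in\L_+^w(S,K)\}$, which is the first claimed equality. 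The second follows in the same way from the refinement in (i) (so $V=\kappa^*(T_1)$) and the identification $\L_+(\tilde S,K)\cong\L_+(P,K)\times\L_+(S,K)$, giving the index set $T_1\in\dom\kappa^*\cap\L_+(P,K)$, $T_2\in\L_+(S,K)$.

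I expect the main obstacle to be the bookkeeping in (ii): checking that $\boxplus$ with the singleton $\{(0_\L,V)\}$ composed with $\Psi$ reproduces the displayed identity (this is precisely what Proposition \ref{prop_1ab}(v) delivers), and that replacing the index set $\L_{\Phi_1}$, resp. $\L_{\Phi_1}^+$, by the explicit product of domains introduces no spurious points — which rests only on the elementary fact that the $K$-epigraph of a set-valued map taking the value $\{+\infty_Y\}$ at $L$ contains no pair $(L,y)$ with $y\in Y$. The computation in (i) is routine once the change of variables is set up.
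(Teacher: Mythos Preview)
Your proposal is correct and follows essentially the same route as the paper's proof in Appendix~D: the same change of variables $w'=H(x)+w$, $z'=G(x)+z$ in (i) followed by Proposition~\ref{pro_decomp}(iii), and the same use of Proposition~\ref{prop_1ab}(v) in (ii) to pass from the $\uplus$-formula to the extended-epigraph description. The only cosmetic difference is that for the refinement in (i) the paper first computes $I^*_{-S}(T_2)=-\bd K$ via Proposition~\ref{pro_decomp}(vii) and then invokes Proposition~\ref{prop_1ab}(i), whereas you apply Proposition~\ref{pro_decomp}(vii) directly to $U+T_2(-S)$; both yield the same conclusion.
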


\begin{proof} (See  the Apendix D).     \qed\end{proof} 

According to  Lemma \ref{lem:6.1_nwewewE}(ii), the sets $\mathcal{M}$ and $\mathcal{M}_+$ in \eqref{eq:14_nwewewE} now becomes,   respectively,  
\begin{align}
\mathcal{M}^1&:=\bigcup_{\substack{T_1\in \dom \kappa^\ast\\ T_2\in \L_+^w(S,K)}}
\Psi(\exepi (F+T_1\!\circ\! H+T_2\!\circ\! G+I_{C})^\ast\boxplus (0_\L, \kappa^\ast(T_1)\uplus I_{-S}^\ast(T_2)),\\
\mathcal{M}^1_+&:=\bigcup_{\substack{T_1\in \dom \kappa^\ast\cap\L_+(P,K)\\ T_2\in \L_+(S,K)}}
\Psi(\exepi (F+T_1\!\circ\! H+T_2\!\circ\! G+I_{C})^\ast\boxplus (0_\L, \kappa^\ast(T_1)).
\end{align}
By Lemma \ref{lem:6.1_nwewewE}(i), 
for $ T:=(T_1,T_2)\in \L(Z_1,Y)\times \L(Z_2,Y)$, 
\begin{align*} \Phi_1^*(0_\L,T)&= (F+T_1\circ H+T_2\circ G+I_{C})^*(0_\L)\uplus \kappa^\ast(T_1)\uplus I^*_{-S}(T_2).
\end{align*}
 If  $ T_2\in \L_+(S_2,K) $  then 
$\Phi_1^*(0_L,T)=(F+T_1\circ H+T_2\circ G+I_{C})^*(0_\L)\uplus \kappa^\ast(T_1).$ 
 Note that 
\begin{align*}
&-(F+T_1\circ H+T_2\circ G+I_{C})^*(0_\L)\uplus \kappa^\ast(T_1)\uplus I^*_{-S}(T_2)\\
&=-\wsup[(F+T_1\circ H+T_2\circ G+I_{C})^*(0_\L)+ \kappa^\ast(T_1)+ I^*_{-S}(T_2)] \\
&=-\wsup[\wsup(-F-T_1\circ H-T_2\circ G-I_{C})(X)+ \wsup(T_1-\kappa)(Z)+ \wsup(-T_2)(-S)] \\
&=\mathop{\winf}\limits_{(x,z,s)\in C\times Z\times S}[F(x)+(T_1\circ H)(x)+(T_2\circ G)(x)+\kappa(z)-T_1(z)+ T_2(s)], 
\end{align*}
and so,  the
{\it  dual problem}  and the {\it loose  dual problem} of $({\rm CCVP})$ which are specific cases of $({\rm D})$ and $({\rm D}_\ell)$   with $\Phi=\Phi_1$,  become the  {\it Lagrange dual} and {\it loose Lagrange dual problems}:  
\begin{eqnarray*}
\label{eq_42kkk}
({\rm CCVD}^1) && \hskip1.3em\mathop{\wsup}_{\substack{T_1\in\dom \kappa^\ast\\T_2 \in\L_+^w(S,K)}} \hskip1.3em
 \mathop{\winf}\limits_{(x,z,s)\in C\times Z\times S}[F(x)\!+\!(T_1\!\circ\! H)(x)\!+\!(T_2\!\circ\! G)(x)\!+\!\kappa(z)\!-\!T_1(z)\!+\! T_2(s)],  \\
\label{eq_42kk}
({\rm CCVD}_\ell^1) &&   \mathop{\wsup}_{\substack{T_1\in\dom \kappa^\ast\cap \L_+(P,K)\\T_2 \in\L_+(S,K)}} \mathop{\winf}\limits_{(x,z)\in C\times Z}[F(x)\!+\!(T_1\!\circ\! H)(x)\!+\!(T_2\!\circ\! G)(x)\!+\!\kappa(z)\!-\!T_1(z)], 
\end{eqnarray*}

When $\kappa \equiv 0$, $({\rm CCVP}) $ collapses to the vector optimization problem $({\rm VP}) $ in Section 6.2 (see also \cite{CDLP20}) and then the above two dual problems   collapse to the ``familiar" Lagrange and loose Lagrange  dual problems $({\rm VD}^1)$ and $({\rm VD}_\ell^1)$ in  Section 6.2.

\begin{corollary}[Principle of stable strong duality for (CCVP)]\label{cor_7.8zzz}
Consider the  statements:  

$({\rm a}_1)$  $ \epi (F+\kappa\circ H+I_A)^\ast = \mathcal{M}^1$,

$({\rm b}_1)$ $\epi (F+\kappa\circ H+I_A)^\ast =  \mathcal{M}^1_+$, 

$({\rm e}_1)$  The  {stable} strong duality holds for the pair  $({\rm CCVP})-({\rm CCVD}^1)$, 

$({\rm f}_1)$  The  {stable} strong duality holds for the pair  $({\rm CCVP})-({\rm CCVD}_\ell^1)$.

\noindent
Then, it holds $[({\rm a}_1)\!\Leftrightarrow\! ({\rm e}_1)]$ and $[({\rm b}_1 )\!  \Leftrightarrow\! ({\rm f}_1)]$.
\end{corollary}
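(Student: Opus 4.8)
The plan is to recognize Corollary~\ref{cor_7.8zzz} as a direct specialization of the abstract characterization Theorem~\ref{thm_9kk} to the concrete perturbation mapping $\Phi_1$ defined in \eqref{phi1}. The whole argument therefore reduces to correctly matching the abstract objects $\epi\Phi(.,0_Z)^*$, $\mathcal{M}$, $\mathcal{M}_+$, $({\rm D})$, $({\rm D}_\ell)$ with their concrete counterparts for $\Phi=\Phi_1$, and then invoking Theorem~\ref{thm_9kk} verbatim.

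First I would observe that $\Phi_1$ is a proper $K$-convex mapping on $X\times\tilde Z$ with $\tilde Z=W\times Z$, and that $(0_W,0_Z)\in\pi_1(\dom\Phi_1)$ thanks to the standing assumption \eqref{eq:42_newew}; this is exactly the hypothesis ``$0_Z\in\pi(\dom\Phi)$'' required to even form the problem $({\rm P})$ and state Theorem~\ref{thm_9kk}. Next I would identify, using $\Phi_1(x,0_W,0_Z)=F(x)+(\kappa\circ H)(x)+I_A(x)$, that the primal problem $({\rm P})$ for $\Phi=\Phi_1$ is precisely $({\rm CCVP})$ and that $\epi\Phi_1(.,0_W,0_Z)^*=\epi(F+\kappa\circ H+I_A)^*$. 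Then I would invoke Lemma~\ref{lem:6.1_nwewewE}(ii), which gives exactly the computations of $\bigcup_{T\in\L_{\Phi_1}}\epi\Phi_1^*(.,T)$ and $\bigcup_{T\in\L_{\Phi_1}^+}\epi\Phi_1^*(.,T)$; by the definitions \eqref{eq:14_nwewewE} of $\mathcal{M}$ and $\mathcal{M}_+$, these are precisely $\mathcal{M}^1$ and $\mathcal{M}^1_+$ as written in the statement. Finally, the computation of $-\Phi_1^*(0_\L,T)$ carried out in the paragraphs just before the corollary shows that the dual problems $({\rm D})$ and $({\rm D}_\ell)$ for $\Phi=\Phi_1$ are exactly $({\rm CCVD}^1)$ and $({\rm CCVD}_\ell^1)$, and that the loose-dual index set $\L_{\Phi_1}^+=\L_{\Phi_1}\cap\L_+(\tilde S,K)$ matches the constraint $T_1\in\dom\kappa^*\cap\L_+(P,K)$, $T_2\in\L_+(S,K)$ appearing there (using $\L_+(\tilde S,K)\cong\L_+(P,K)\times\L_+(S,K)$).

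With all identifications in place, the equivalences follow immediately: statement $({\rm a}_1)$ is statement $\rm(a)$ of Theorem~\ref{thm_9kk} for $\Phi=\Phi_1$; statement $({\rm b}_1)$ is $\rm(b)$; statement $({\rm e}_1)$ is $\rm(e)$; statement $({\rm f}_1)$ is $\rm(f)$. Hence $[({\rm a}_1)\Leftrightarrow({\rm e}_1)]$ and $[({\rm b}_1)\Leftrightarrow({\rm f}_1)]$ are nothing but $[\rm(a)\Leftrightarrow\rm(e)]$ and $[\rm(b)\Leftrightarrow\rm(f)]$ of Theorem~\ref{thm_9kk}. The main (and really the only) obstacle is the bookkeeping in the identification step — in particular being careful that $\mathcal{M}^1,\mathcal{M}^1_+$ as defined via the $\Psi$-image of a $\boxplus$-sum of extended epigraphs genuinely coincide with the unions $\bigcup_{T}\epi\Phi_1^*(.,T)$ over the correct index sets $\L_{\Phi_1}$ and $\L_{\Phi_1}^+$; this is precisely the content of Lemma~\ref{lem:6.1_nwewewE}, so once that lemma is granted, nothing further is needed. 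I would therefore keep the proof to a single short paragraph: ``Apply Theorem~\ref{thm_9kk} with $\Phi=\Phi_1$, noting that $\epi\Phi_1(.,0_{\tilde Z})^*=\epi(F+\kappa\circ H+I_A)^*$, that $\mathcal{M}=\mathcal{M}^1$ and $\mathcal{M}_+=\mathcal{M}^1_+$ by Lemma~\ref{lem:6.1_nwewewE}(ii), and that $({\rm D})=({\rm CCVD}^1)$, $({\rm D}_\ell)=({\rm CCVD}_\ell^1)$ by the computation of $-\Phi_1^*(0_\L,\cdot)$ above.''
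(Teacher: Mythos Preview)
Your proposal is correct and matches the paper's own proof almost verbatim: apply Theorem~\ref{thm_9kk} with $\Phi=\Phi_1$, using $\Phi_1(\cdot,0_W,0_Z)=F+\kappa\circ H+I_A$, Lemma~\ref{lem:6.1_nwewewE}(ii) for $\mathcal{M}=\mathcal{M}^1$ and $\mathcal{M}_+=\mathcal{M}^1_+$, and the computation of $-\Phi_1^*(0_\L,\cdot)$ for the dual identifications. One small remark: your aside that $\Phi_1$ is $K$-convex is neither needed for Theorem~\ref{thm_9kk} (whose proof does not use the standing convexity assumption) nor justified at this point---it would require the extra hypotheses on $F,H,G,\kappa,C$ that appear only in Corollary~\ref{corol86}---so you should simply drop it.
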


\begin{proof}
In the case $\Phi=\Phi_1$, we have shown that $\Phi(.,0_{W}, 0_{Z})=F+(\kappa \circ H)+I_A$, $\M=\M^1$ and $\M_+=\M^1_+$ while the dual problems dual problems  $({\rm CCVD})$ and $({\rm CCVD}_\ell)$ are of the form of  $({\rm D})$ and $({\rm D}_\ell)$,  respectively. The conclusion now follows from Theorem \ref{thm_9kk}.  
\end{proof}

\begin{corollary}[Stable strong Lagrange duality for (CCVP)]\label{corol86}
Assume that $F$ is $K$-convex, that $H$ is $P$-convex, that $G$ is $S$-convex,  that  $\kappa$ is $K$-convex and $(P,K)$-nondecreasing, and  that $C$ is convex. Denote $Z_0^1:=\lin [\{(H(x),G(x)): x\in C\}+(-\dom\kappa )\times S]$
 and assume $(C_1^1)$  holds:

\begin{tabular}{c | c}
$(C_1^1)$ &  
\begin{minipage}{0.8\textwidth}
$\forall {L\in \L(X,Y)}$, $\exists y_L\in Y$,  $\exists V_L\in\mathcal{N}(0_{Z_1},0_{Z_2})$ s.t.
$$\forall (w,z)\in  V_L\cap Z_0^1,\; \exists x\in C: G(x)+z\in -S\textrm{ and } F(x)+\kappa(H(x)+w)-L(x)\leqq_K y_L.$$
\end{minipage}
\end{tabular}

\noindent Then,   {stable} strong duality holds for the  pairs $({\rm CCVP})-({\rm CCVD}^1)$ and  $({\rm CCVP})-({\rm CCVD}_\ell^1)$.
\end{corollary}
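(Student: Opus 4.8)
The plan is to reduce Corollary \ref{corol86} to Corollary \ref{cor_7.8zzz} (equivalently, to Theorem \ref{thm_12kk} applied with $\Phi = \Phi_1$). Thus the entire argument amounts to verifying two things: that $\Phi_1$ is a proper $K$-convex mapping, and that the regularity condition $(C_1)$ of Section 3 holds for $\Phi_1$ when $Z_0$ is taken to be $Z_0^1$ and $(C_1^1)$ is assumed. Once these are checked, Theorem \ref{thm_12kk} immediately gives stable strong duality for the pair $({\rm P})-({\rm D})$ with $\Phi = \Phi_1$, which we have already identified with $({\rm CCVP})-({\rm CCVD}^1)$; and since the extra hypothesis $(C_0)$ for $\Phi_1$ is automatic here (take $\tilde x$ to be any point of $A \cap \dom F \cap H^{-1}(\dom\kappa)$, which is nonempty by \eqref{eq:42_newew}, and note $\Phi_1(\tilde x, 0_W, z) = F(\tilde x) + \kappa(H(\tilde x)) + I_A(\tilde x)$ is unchanged as $z$ ranges over $-\tilde S = -P\times(-S)$ in the second block, while the first block $w = 0_W$ is fixed — more precisely one checks $\Phi_1(\tilde x, (0_W,z')) \leqq_{\tilde S\text{-adapted}} \Phi_1(\tilde x, 0_{\tilde Z})$ for $z' \in -\tilde S$ using $(P,K)$-nondecreasingness of $\kappa$), the second half of Theorem \ref{thm_12kk} also yields stable strong duality for $({\rm CCVP})-({\rm CCVD}_\ell^1)$.

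First I would verify $K$-convexity of $\Phi_1$. This is where the hypotheses on $F,G,H,\kappa,C$ are consumed: $F$ is $K$-convex, $G$ is $S$-convex, $H$ is $P$-convex, $C$ is convex, $\kappa$ is $K$-convex and $(P,K)$-nondecreasing. The standard argument is that $x \mapsto \kappa(H(x)+w)$ is $K$-convex in $(x,w)$ because $\kappa$ is $K$-convex and nondecreasing and $(x,w)\mapsto H(x)+w$ is $P$-convex; adding the $K$-convex $F$ keeps $K$-convexity; and the constraint region $\{(x,w,z): x\in C,\ G(x)+z\in -S\}$ is convex because $C$ is convex and $(x,z)\mapsto G(x)+z$ is $S$-convex, so its $-S$-sublevel set is convex. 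Intersecting the epigraph of the objective block with (the cylinder over) this convex set shows $\epi_K\Phi_1$ is convex, i.e. $\Phi_1$ is $K$-convex. Properness of $\Phi_1$ follows from \eqref{eq:42_newew}: the feasible point lies in $\dom\Phi_1$, and $-\infty_Y \notin \Phi_1(X\times\tilde Z)$ since $F,\kappa$ are proper (hence never take the value $-\infty_Y$).

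Next I would check $0_{\tilde Z} \in \pi_1(\dom\Phi_1)$ (already noted in the text, again from \eqref{eq:42_newew}) and confirm that $Z_0 := \lin(\pi_1(\dom\Phi_1))$ equals $Z_0^1$; this is a direct computation from the displayed formula $\pi_1(\dom\Phi_1) = -\{(H(x),G(x)): x\in\tilde C\} + \dom\kappa\times(-S)$, noting $\tilde C$ and $C$ give the same linear hull after the relevant domain restrictions are absorbed. Finally, I would observe that $(C_1^1)$, spelled out, is precisely $(C_1)$ for the mapping $\Phi_1$: unwinding $\Phi_1(x,w,z)-L(x)\leqq_K y_L$ with $z$ replaced by the perturbation variable and using the definition of $\Phi_1$ gives exactly the condition ``$\exists x\in C$ with $G(x)+z\in -S$ and $F(x)+\kappa(H(x)+w)-L(x)\leqq_K y_L$''. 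So $(C_1^1) \Leftrightarrow (C_1)$ for $\Phi_1$, and Theorem \ref{thm_12kk} applies verbatim.

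The main obstacle is the careful bookkeeping in the $K$-convexity verification — in particular correctly using the $(P,K)$-nondecreasing property of $\kappa$ to convert $P$-convexity of $H$ into $K$-convexity of the composite $\kappa\circ(H+\,\cdot\,)$, and handling the $+\infty_Y$/$+\infty_W$ conventions so that the domain-intersection step is genuinely an intersection of convex sets. Everything else (properness, the identification $Z_0=Z_0^1$, the translation of $(C_1^1)$ into $(C_1)$, and the reduction to Theorem \ref{thm_12kk} via Corollary \ref{cor_7.8zzz}) is routine once that step is in place; the $(C_0)$-for-$\Phi_1$ check needed for the loose-dual half is a one-line consequence of monotonicity of $\kappa$ and nonnegativity under $\tilde S$-perturbations.
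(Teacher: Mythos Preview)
Your proposal is correct and follows essentially the same route as the paper: verify that $\Phi_1$ is $K$-convex from the convexity/monotonicity hypotheses, identify $(C_1^1)$ as the specialization of $(C_1)$ to $\Phi_1$, check $(C_0)$ for $\Phi_1$ using a feasible point and the $(P,K)$-nondecreasingness of $\kappa$, and then invoke Theorem~\ref{thm_12kk}. One remark: your description of the $(C_0)$ check is garbled (you write ``$\Phi_1(\tilde x,0_W,z)$ is unchanged \ldots\ while the first block $w=0_W$ is fixed'' and then ``$z'\in -\tilde S$'' with $z'$ apparently a pair), whereas what is needed is simply that for $(w,z)\in (-P)\times(-S)$ one has $G(\tilde x)+z\in -S$ (so the constraint stays satisfied) and $\kappa(H(\tilde x)+w)\leqq_K \kappa(H(\tilde x))$ by monotonicity, yielding $\Phi_1(\tilde x,w,z)\leqq_K\Phi_1(\tilde x,0_W,0_Z)$; this is exactly what the paper writes.
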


\begin{proof}
Assumptions on convexity of the mappings $F,H,G,\kappa$ and the set $C$ and on the non-decreasing property of $\kappa$ entail that $\Phi_1$ is $K$-convex mapping. The condition $(C_1^1)$ ensures that $(C_1)$ in previous sections holds with $\Phi=\Phi_1$. So, according to  Theorem \ref{thm_12kk},  the stable strong duality holds for the  pairs $({\rm CCVP})-({\rm CCVD}^1)$.

Moreover, as $A\cap\dom F\cap H^{-1}(\dom \kappa)\ne\emptyset$, there is $\tilde x\in X$ such that $\tilde x\in C$ and $G(\tilde x)\in -S$ which yields $\Phi_1(\tilde x,0_W,0_Z)=F(\tilde x)+(\kappa\circ H)(\tilde x)$. Take $w,z\in -(P\times S)$. As $z\in -S$, $G(\tilde x)+z\in -S$ as well, and hence, $\Phi_1(\tilde x, w,z)=F(\tilde x)+\kappa(H(\tilde x)+w)$. This, together with the fact that $w\in -P$ and that $\kappa$ is $(P,K)$ non-decreasing, yields $\Phi_1(\tilde x, w,z)\leqq_KF(\tilde x)+(\kappa\circ H)(\tilde x)= \Phi_1(\tilde x,0_W,0_Z)$. We have just proved that $(C_0)$ holds with $\Phi=\Phi_1$. So, again by Theorem \ref{thm_12kk},  the {stable} strong duality holds for the  pairs $({\rm CCVP})-({\rm CCVD}^1_\ell)$.   
\end{proof}
\begin{remark} \label{rem71}
Observe  that one can specify conditions $(C_2)-(C_7)$ to this setting and   use Theorem \ref{thm_14kk}      (instead of  Theorem \ref{thm_12kk})     to     get stable strong duality results for pairs $({\rm CCVP})-({\rm CCVD}^1)$,   $({\rm CCVP})-({\rm CCVD}_\ell^1)$. However, nto to make the paper becomes too long, we omit them. 
Similar observation also applies to other perturbation mappings $\Phi_2$ and $\Phi_3$ below.   
\end{remark}
 
With $\Phi=\Phi_1$,   $\V$-stability of general vector inequalities in Theorem \ref{thm_PFL1} leads to
\begin{corollary}[Stable Farkas lemma for constrained composite vector systems 1]
\label{cor:6.3www}
 Consider the  following statements:  

$({\rm a}_1)$ {$ \epi (F+\kappa\circ H+I_A)^\ast = \mathcal{M}^1$},

$({\rm b}_1)$  {$\epi (F+\kappa\circ H+I_A)^\ast = \mathcal{M}^1_+$}, 

$({\rm c}_1)$  For all {$(L,y)\in \L(X,Y)\times Y$}, two following assertions are equivalent

\hskip1cm $(\alpha')$ $x\in C,\;  G(x)\in -S\; \Longrightarrow\;\; F(x)+(\kappa\circ H)(x)-L(x)+y\notin -\inte K.$

\hskip1cm $(\beta')$ $\exists T_1\in \dom \kappa^\ast,\; T_2\in \L_+^w(S,K)$ such that 
 $$ \ \ \ \ \ \ \ \ \ F(x)+(T_1\circ H)(x)+(T_2\circ G)(x)-L(x)+y\notin \kappa^\ast(T_1)+T_2(-S)-\inte K, \forall x\in C, $$

$({\rm d}_1)$ For all $(L,y)\in \L(X,Y)\times Y$, two following assertions are equivalent

\hskip1cm $(\alpha')$ $\Phi(x,0_Z)-L(x)+y\notin -\inte K,\; \forall x\in X,$

\hskip1cm $(\gamma')$ $\exists T_1\in \dom \kappa^\ast\cap\L_+(P,K),\; T_2\in \L_+(S,K)$ such that 
 $$F(x)+(T_1\circ H)(x)+(T_2\circ G)(x)-L(x)+y\notin \kappa^\ast(T_1)-\inte K,\quad \forall x\in C.$$

\noindent
Then $[({\rm a}_1)\!   \Leftrightarrow\! ({\rm c}_1)]$ and  $[({\rm b}_1)\! \Leftrightarrow\! ({\rm d}_1)]$. 
\end{corollary}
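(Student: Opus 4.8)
The plan is to obtain Corollary~\ref{cor:6.3www} by specializing Theorem~\ref{thm_PFL1} to the perturbation mapping $\Phi=\Phi_1$, translating each abstract statement into the concrete data $(F,\kappa,H,G,C,S)$. The first step is to recall the identifications already made in the text: $\Phi_1(\cdot,0_W,0_Z)=F+(\kappa\circ H)+I_A$, so that $\epi\Phi_1(\cdot,0_W,0_Z)^*=\epi(F+\kappa\circ H+I_A)^*$, and by Lemma~\ref{lem:6.1_nwewewE}(ii) the sets $\mathcal M,\mathcal M_+$ attached to $\Phi_1$ are exactly $\mathcal M^1,\mathcal M^1_+$. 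Hence statements $({\rm a}_1)$ and $({\rm b}_1)$ here coincide verbatim with statements $\rm(a)$ and $\rm(b)$ of Theorem~\ref{thm_PFL1} instantiated at $\Phi=\Phi_1$. It then remains only to check that $({\rm c}_1)$ and $({\rm d}_1)$ are, respectively, statements $\rm(c)$ and $\rm(d)$ of that theorem rewritten in terms of the problem data; once this is done, the conclusion $[({\rm a}_1)\Leftrightarrow({\rm c}_1)]$ and $[({\rm b}_1)\Leftrightarrow({\rm d}_1)]$ is immediate from Theorem~\ref{thm_PFL1}.

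The second step, therefore, is the translation of $\rm(c)$ into $({\rm c}_1)$. On the primal side, $(\alpha)$ reads $\Phi_1(x,0_W,0_Z)-L(x)+y\notin-\inte K$ for all $x\in X$; since $\Phi_1(x,0_W,0_Z)=F(x)+(\kappa\circ H)(x)$ when $x\in A=C\cap G^{-1}(-S)$ and $=+\infty_Y$ otherwise, and since $+\infty_Y-L(x)+y=+\infty_Y\notin-\inte K$ trivially, this is exactly $(\alpha')$: the implication ``$x\in C,\ G(x)\in-S\Rightarrow F(x)+(\kappa\circ H)(x)-L(x)+y\notin-\inte K$''. On the dual side, $(\beta)$ asserts the existence of $T=(T_1,T_2)\in\L_{\Phi_1}$ with $\Phi_1(x,w,z)-L(x)-T_1(w)-T_2(z)+y\notin-\inte K$ for all $(x,w,z)$. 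Here I would invoke the domain description of $\Phi_1^*$ — namely $\L_{\Phi_1}={\rm proj}_{\L(\tilde Z,Y)}\dom\Phi_1^*$, which by Lemma~\ref{lem:6.1_nwewewE}(i) and \eqref{domI*} forces $T_1\in\dom\kappa^*$ and $T_2\in\L_+^w(S,K)$ — and then unfold the inequality over the domain of $\Phi_1$: writing the constraint $G(x)+z\in-S$ as $z=-G(x)-s$ with $s\in S$ and $w$ ranging over $-H(x)+\dom\kappa$, the condition becomes, after using \eqref{epiF*} and Proposition~\ref{pro_decomp}(v) to pass to the $\wsup$-form of the conjugates, precisely $(\beta')$: $F(x)+(T_1\circ H)(x)+(T_2\circ G)(x)-L(x)+y\notin\kappa^*(T_1)+T_2(-S)-\inte K$ for all $x\in C$. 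The key bookkeeping is that $\sup$ over $w\in-H(x)+\dom\kappa$ of $-\kappa(H(x)+w)+T_1(w)$ produces the term $-\,(T_1\circ H)(x)+\kappa^*(T_1)$, and $\sup$ over $s\in S$ of $-T_2(s)$ produces $I^*_{-S}(T_2)=T_2(-S)$-type terms, consistent with the $\uplus$-decomposition in \eqref{eq:45nwewE}.

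The third step repeats this unfolding for $\rm(d)\leftrightarrow({\rm d}_1)$: now $T\in\L_{\Phi_1}^+=\L_{\Phi_1}\cap\L_+(\tilde S,K)$, and since $\tilde S=P\times S$ and $\L_+(\tilde S,K)\cong\L_+(P,K)\times\L_+(S,K)$, the extra requirement is $T_1\in\L_+(P,K)$ and $T_2\in\L_+(S,K)$; when $T_2\in\L_+(S,K)$ the second formula in Lemma~\ref{lem:6.1_nwewewE}(i) shows the $I^*_{-S}(T_2)$ contribution is absorbed (indeed $I^*_{-S}(T_2)=\wsup(-T_2)(-S)$ and, by Proposition~\ref{pro_decomp}(vii), $\wsup(M-T_2(S))=\wsup(M-K)=\wsup M$ since $T_2(S)\subset K$), so the $T_2(-S)$ term disappears from the right-hand side, yielding $(\gamma')$: $F(x)+(T_1\circ H)(x)+(T_2\circ G)(x)-L(x)+y\notin\kappa^*(T_1)-\inte K$ for all $x\in C$. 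With both translations in hand, $({\rm c}_1)$ is literally $\rm(c)$ and $({\rm d}_1)$ is literally $\rm(d)$ at $\Phi=\Phi_1$, and Theorem~\ref{thm_PFL1} gives the two equivalences.

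I expect the main obstacle to be purely notational rather than conceptual: carefully carrying the three-way $\uplus$-decomposition of $\Phi_1^*$ through the ``$\notin-\inte K$'' statements, making sure that the $\wsup$ over the indicator cone $-S$ and over $\dom\kappa$ lands exactly on the sets $T_2(-S)$ and $\kappa^*(T_1)$ appearing in $(\beta')$, $(\gamma')$. One should be slightly careful that when $\kappa^*(T_1)$ or $I^*_{-S}(T_2)$ equals $\{+\infty_Y\}$ the operator $T$ is simply not in $\L_{\Phi_1}$, so such $T$ are harmless; this is already encoded in restricting $T_1\in\dom\kappa^*$ and $T_2\in\L_+^w(S,K)=\dom I^*_{-S}$. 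A clean way to avoid reproving the unfolding twice is to note that, since $({\rm a}_1)$, $({\rm b}_1)$ coincide with $\rm(a)$, $\rm(b)$ and $({\rm c}_1)$, $({\rm d}_1)$ coincide with $\rm(c)$, $\rm(d)$ for $\Phi=\Phi_1$, the corollary is a direct instance of Theorem~\ref{thm_PFL1}; the bulk of the write-up is the verification of these coincidences, which is routine given Lemma~\ref{lem:6.1_nwewewE} and \eqref{epiF*}.
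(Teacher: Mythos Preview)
Your proposal is correct and follows exactly the approach indicated in the paper: the corollary is simply Theorem~\ref{thm_PFL1} specialized to $\Phi=\Phi_1$, with the identifications $\Phi_1(\cdot,0_W,0_Z)=F+(\kappa\circ H)+I_A$, $\mathcal M=\mathcal M^1$, $\mathcal M_+=\mathcal M^1_+$ (via Lemma~\ref{lem:6.1_nwewewE}(ii)) and the unfolding of $(\beta)$, $(\gamma)$ into $(\beta')$, $(\gamma')$ via the conjugate formula~\eqref{eq:45nwewE}. The paper states the corollary without a written proof, leaving the translation implicit; your write-up supplies precisely those routine details.
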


\textit{\textbf{The second   perturbation mapping $\Phi_2$: Fenchel-Lagrange duality I.}}
Take $\tilde Z := X\times W\times Z$  as  the space of  perturbation variables (with the cone  $\tilde S:=\{0_X\}\times P\times S$, and consequently, $\L_+(\tilde S, K)  \cong \L(X,Y)\times \L_+(P,K)\times \L_+(S,K)$). We define    the perturbation mapping 
$\Phi_2\colon X\times \tilde Z\to Y^\infty$,
\begin{equation}
\label{phi2}
\Phi_2(x,v,w,z)=\begin{cases}F(x+v)+\kappa(H(x)+w), & \textrm{if } x\in C \textrm{ and } G(x)+z\in -S\\
+\infty_Y,& \textrm{otherwise},
\end{cases}
\end{equation}
 We now give shortly some key properties of the perturbation $\Phi_2$ which can be proved by the similar arguments as the ones for the first perturbation $\Phi_1$ (see Lemma \ref{lem:6.1_nwewewE}):

$\bullet$ It holds that $\Phi_2(x,0_X,0_{W}, 0_{Z})=F(x)+(\kappa \circ H)(x)+I_A(x)$ for all $x\in X$,  and  
$$  \pi(\dom \Phi_2)  =-\Big\{(x,H(x),G(x)): x\in C\cap \dom H\cap\dom G\Big\}+\dom F\times \dom \kappa \times (-S).$$

$\bullet$ For all $L\in \L(X,Y)$ and $T:=(L',T_1,T_2)\in  \L(X,Y)\times \L(W,Y)\times \L(Z,Y)$, one has
$$\Phi_2^*(L,T)=F^\ast(L')\uplus(T_1\circ H+T_2\circ G+I_C)^*(L-L')\uplus\kappa^\ast(T_1)\uplus I^*_{-S}(T_2),$$
and if $T_2\in \L_+(S,K)$ then 
$$\Phi_2^*(L,T)=F^\ast(L')\uplus(T_1\circ H+T_2\circ G+I_C)^*(L-L')\uplus\kappa^\ast(T_1).$$

$\bullet$ The sets $\mathcal{M}$ and $\mathcal{M}_+$ (defined by \eqref{eq:14_nwewewE}) in this case   become, respectively
\begin{align}  
\mathcal{M}^2&:=\bigcup_{\substack{T_1\in \dom \kappa^\ast\\ T_2\in \L_+^w(S,K)}}
\Psi\left(\!\exepi F^\ast\! \boxplus \!\exepi(T_1\!\circ\! H\!+\!T_2\!\circ\! G\!+\!I_C)^\ast\!\boxplus\! (0_\L\!, \kappa^\ast\!(T_1)\uplus I_{-\!S}^\ast\!(T_2)\right),\label{eq:A2}\\
\mathcal{M}^2_+&:=\bigcup_{\substack{T_1\in \dom \kappa^\ast\cap\L_+(P,K)\\ T_2\in \L_+(S,K)}}
\Psi\left(\exepi F^\ast  \boxplus \exepi(T_1\!\circ\! H\!+\!T_2\!\circ\! G\!+\!I_C)^\ast\boxplus (0_\L, \kappa^\ast(T_1)\right). \label{eq:A2l}
\end{align}

$\bullet$ In this  case ($\Phi=\Phi_2$,  $\tilde Z$ plays the role of $Z$),   the dual problems $({\rm D})$ and $({\rm D}_\ell)$  lead to  different {Fenchel-Lagrange  dual problems} of  $({\rm CCVP})$ as follows:
\begin{align*}
({\rm CCVD}^2)&\quad \hskip1.3em\mathop{\wsup}_{\substack{L'\in \L(X,Y)\\T_1\in \dom \kappa^\ast\\T_2\in \L_+^w(S,K)}} \hskip1.3em
  \big[-F^\ast(L')\uplus (T_1\!\circ\! H\!+\!T_2\!\circ\! G\!+\!I_C)^*(\!-\!L'\!)\uplus \kappa^\ast(T_1) \uplus I^*_{-S}(T_2)\big],\\
({\rm CCVD}_\ell^2)&\quad \mathop{\wsup}_{\substack{L'\in \L(X,Y)\\T_1\in \dom \kappa^\ast\cap \L_+(P,K)\\T_2\in \L_+(S,K)}}   
\!\big[-F^\ast(L')\uplus (T_1\circ H+T_2\circ G+I_C)^*(-L')\uplus \kappa^\ast(T_1)\big].
\end{align*}

The next corollary is a  consequence of Theorems \ref{thm_9kk}. 

\begin{corollary}[Principle of stable strong duality for (CCVP)]\label{cor_7.8zzz2}
Consider the  statements:

$({\rm a}_2)$ { $\epi (F+\kappa\circ H+I_A)^\ast =\mathcal{M}^2$},

$({\rm b}_2)$ { $\epi (F+\kappa\circ H+I_A)^\ast  =\mathcal{M}^2_+$},

$({\rm e}_2)$  The {stable} strong duality holds for the pair  $({\rm CCVP})-({\rm CCVD}^2)$, 

$({\rm f}_2)$  The  {stable} strong duality holds for the pair  $({\rm CCVP})-({\rm CCVD}_\ell^2)$.

\noindent Then, it holds $[({\rm a}_2)\!\Leftrightarrow\! ({\rm e}_2)]$ and $[({\rm b}_2 ) \! \Leftrightarrow \!({\rm f}_2)]$.
\end{corollary}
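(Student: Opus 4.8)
The statement is a direct specialization of Theorem \ref{thm_9kk} to the perturbation mapping $\Phi=\Phi_2$ of \eqref{phi2}, so the plan is to verify that the data occurring in Theorem \ref{thm_9kk} — the objective $\Phi(\cdot,(0_X,0_W,0_Z))$, the sets $\mathcal{M}$ and $\mathcal{M}_+$ of \eqref{eq:14_nwewewE}, and the dual pair $(\mathrm D)$, $(\mathrm D_\ell)$ — match respectively $F+\kappa\circ H+I_A$, the sets $\mathcal{M}^2$ and $\mathcal{M}^2_+$ of \eqref{eq:A2}--\eqref{eq:A2l}, and the problems $(\mathrm{CCVD}^2)$, $(\mathrm{CCVD}_\ell^2)$. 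The only hypotheses Theorem \ref{thm_9kk} actually uses are that $\Phi_2$ be proper and that $(0_X,0_W,0_Z)\in\pi(\dom\Phi_2)$; both follow from \eqref{eq:42_newew} (pick $\bar x\in A\cap\dom F\cap H^{-1}(\dom\kappa)$), and plugging $v=0_X$, $w=0_W$, $z=0_Z$ into \eqref{phi2} gives $\Phi_2(x,0_X,0_W,0_Z)=F(x)+(\kappa\circ H)(x)+I_A(x)$ for all $x\in X$.

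The substantive step is the conjugate identity for $\Phi_2$, the analogue of Lemma \ref{lem:6.1_nwewewE}(i). Writing the second dual variable as $T=(L',T_1,T_2)$, I would start from $\Phi_2^*(L,T)=\wsup\{L(x)+L'(v)+T_1(w)+T_2(z)-\Phi_2(x,v,w,z)\}$, reparametrize over $\dom\Phi_2$ by replacing $v$ by $v-x$, $w$ by $w-H(x)$, $z$ by $z-G(x)$, so that the term inside $\wsup$ splits into a part in $(L',v)$, a part in $(L-L',x)$ with $x\in C$, a part in $(T_1,w)$, and a part $T_2(z)$ with $z$ ranging over $-S$; distributing the outer $\wsup$ across this sum by Proposition \ref{pro_decomp}(iii) and the associativity of $\uplus$ (Proposition \ref{prop_1ab}(iii)) yields
\[
\Phi_2^*(L,T)=F^*(L')\uplus(T_1\circ H+T_2\circ G+I_C)^*(L-L')\uplus\kappa^*(T_1)\uplus I^*_{-S}(T_2),
\]
where the last factor forces $T_2\in\L_+^w(S,K)$ via \eqref{domI*}, while if $T_2\in\L_+(S,K)$ a short computation with Proposition \ref{pro_decomp} gives $I^*_{-S}(T_2)=\wsup(-T_2(S))=-\bd K$, the $\uplus$-neutral element (Proposition \ref{prop_1ab}(i)), so that factor drops out. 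Taking unions over $T\in\L_{\Phi_2}$ (resp.\ $T\in\L_{\Phi_2}^+$) and turning each epigraph into an extended epigraph through $\epi F^*=\Psi(\exepi F^*)$ (Proposition \ref{rel_epi}(i)) together with the definitions of the $\nplus$-sum and of $\Psi$, I obtain exactly $\mathcal{M}=\mathcal{M}^2$ and $\mathcal{M}_+=\mathcal{M}^2_+$. Setting $L=0_\L$ and negating, and rewriting each $-\wsup(\cdot)$ as $\winf(-\,\cdot)$, recovers the objective of $(\mathrm{CCVD}^2)$ and of $(\mathrm{CCVD}_\ell^2)$, identifying $(\mathrm D)$ with $(\mathrm{CCVD}^2)$ and $(\mathrm D_\ell)$ with $(\mathrm{CCVD}_\ell^2)$.

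Once all this matching is in place, Theorem \ref{thm_9kk} delivers $[{\rm(a)}\Leftrightarrow{\rm(e)}]$ and $[{\rm(b)}\Leftrightarrow{\rm(f)}]$, which read verbatim as $[({\rm a}_2)\Leftrightarrow({\rm e}_2)]$ and $[({\rm b}_2)\Leftrightarrow({\rm f}_2)]$. The main obstacle is the distribution of the weak supremum across the fourfold sum in the conjugate formula and the bookkeeping of which $\uplus$-summands survive under the positivity restrictions on $T_1$ and $T_2$; this is handled by repeated use of the commutativity and associativity of $\uplus$ and of parts (iii) and (vii) of Proposition \ref{pro_decomp}, exactly as in the proof of Lemma \ref{lem:6.1_nwewewE} and Corollary \ref{cor_7.8zzz} for $\Phi_1$. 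Everything else is routine.
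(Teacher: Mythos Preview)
Your proposal is correct and follows essentially the same route as the paper: the paper states that the corollary ``is a consequence of Theorem~\ref{thm_9kk}'' and, in the bullet points preceding it, records exactly the identifications you spell out (that $\Phi_2(\cdot,0_X,0_W,0_Z)=F+\kappa\circ H+I_A$, the conjugate formula for $\Phi_2^*$, and the resulting forms of $\mathcal{M}^2$, $\mathcal{M}^2_+$ and of the two dual problems), referring back to the argument of Lemma~\ref{lem:6.1_nwewewE} for $\Phi_1$. Your write-up simply makes explicit the reparametrization and the uses of Propositions~\ref{pro_decomp} and~\ref{prop_1ab} that the paper leaves implicit.
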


Under some  assumptions  that guarantee $\Phi_2$ to be convex,  and under one of the  conditions $(C_0),(C_1), (C_2), \cdots, (C_6)$\footnote{As the cone  $S=\{0_X\}\times S_1\times S_2$ has empty interior,   $(C_7)$ fails.}  specified to the case  $\Phi =\Phi_2$,  we will get stable duality for pairs $({\rm CCVP})-({\rm CCVD}^2)$ and $({\rm CCVP})-({\rm CCVD}_\ell^2)$. 
However,  we  consider here (and for the case $\Phi_3$ below as well)  only the results concerning the assumption   $(C_1)$. 
Denote 
$$Z_0^2:=\lin \Big[\{(x,H_1(x),H_2(x)): x\in C\cap \dom H_1\cap\dom H_2\}-\dom F\times \dom \kappa \times (-S_2)\Big].$$

\begin{corollary}[Stable strong Fenchel-Lagrange duality for (CCVP)]\label{corol72}
Assume that $F$ is $K$-convex, that $H$ is $P$-convex, that $G$ is $S$-convex,  that  $\kappa$ is $K$-convex and $(P,K)$ non-decreasing, and  that $C$ is convex. 
 Assume further that  the following condition holds:
 
\noindent
\begin{tabular}{c | c}
\!\!\!$(C_1^2)$\!\! &  \!\!\!
\begin{minipage}{0.85\textwidth}
$\forall {L\in \L(X,Y)}$, $\exists y_L\in Y$,  $\exists W_L   \in  \mathcal{N}(0_X,0_{Z_1},0_{Z_2})$  such that 
$$\forall (x',w,z)\in  W_L\cap Z_0^2,\; \exists x\in C: G(x)+z\in -S,\; F(x+x')+\kappa (H(x)+w)-L(x)\leqq_K y_L.$$
\end{minipage}
\end{tabular}

\noindent Then, the {stable} strong duality holds for pairs $({\rm CCVP})-({\rm CCVD}^2)$ and $({\rm CCVP})-({\rm CCVD}_\ell^2)$.
\end{corollary}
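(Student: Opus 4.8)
The plan is to follow the template of the proof of Corollary~\ref{corol86}, now applied to the perturbation mapping $\Phi=\Phi_2$ from \eqref{phi2}, with the perturbation space $\tilde Z=X\times W\times Z$ and the cone $\tilde S=\{0_X\}\times P\times S$. The substantive step, and the one I expect to need the most care, is to verify that the convexity hypotheses force $\Phi_2$ to be $K$-convex, i.e.\ that $\epi\Phi_2$ is convex in $(X\times\tilde Z)\times Y$. The term $(x,v)\mapsto F(x+v)$ is $K$-convex because $(x,v)\mapsto x+v$ is linear and $F$ is $K$-convex; the term $(x,w)\mapsto\kappa(H(x)+w)$ is $K$-convex because $H$ is $P$-convex, so $(x,w)\mapsto H(x)+w$ has convex $P$-epigraph, and composing it with the $K$-convex, $(P,K)$-nondecreasing mapping $\kappa$ preserves $K$-convexity; finally the region $\{(x,v,w,z):x\in C,\ G(x)+z\in -S\}$ is convex since $C$ is convex, $G$ is $S$-convex, and $-S$ is a convex cone. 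Combining these on the level of $K$-epigraphs and restricting to the constraint region gives the $K$-convexity of $\Phi_2$.

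Next I would observe that $(C_1^2)$ is exactly condition $(C_1)$ of Section~3 written out for $\Phi=\Phi_2$, with $Z$ replaced by $\tilde Z$ and $Z_0$ by $Z_0^2=\lin(\pi(\dom\Phi_2))$: unwinding $(C_1)$ asks that for each $L\in\L(X,Y)$ there be $y_L\in Y$ and a neighbourhood $V_L$ of $0_{\tilde Z}$ such that for every $(v,w,z)\in V_L\cap Z_0^2$ some $x$ satisfies $\Phi_2(x,v,w,z)-L(x)\leqq_K y_L$, and feeding in the explicit forms of $\Phi_2$ and $\dom\Phi_2$ (writing $x'$ for the perturbation variable $v$) turns this into $(C_1^2)$. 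Since $\Phi_2$ is $K$-convex and $(C_1)$ holds, and since $\Phi_2(\cdot,0_X,0_W,0_Z)=F+(\kappa\circ H)+I_A$ while $({\rm CCVD}^2)$ is precisely the abstract dual $({\rm D})$ for $\Phi=\Phi_2$, Theorem~\ref{thm_12kk} yields stable strong duality for the pair $({\rm CCVP})-({\rm CCVD}^2)$.

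It remains to treat the loose dual, for which I would check that $(C_0)$ holds for $\Phi_2$. By \eqref{eq:42_newew} pick $\tilde x\in A\cap\dom F\cap H^{-1}(\dom\kappa)$, so $\tilde x\in C$, $G(\tilde x)\in -S$, and $\Phi_2(\tilde x,0_X,0_W,0_Z)=F(\tilde x)+(\kappa\circ H)(\tilde x)\in Y$. For any $(v,w,z)\in -\tilde S$ we have $v=0_X$, $w\in -P$ and $z\in -S$; then $G(\tilde x)+z\in -S$ (as $-S$ is a convex cone), hence $\Phi_2(\tilde x,0_X,w,z)=F(\tilde x)+\kappa(H(\tilde x)+w)$, and since $H(\tilde x)+w\leqq_P H(\tilde x)$ with $H(\tilde x)\in\dom\kappa$ and $\kappa$ is $(P,K)$-nondecreasing, we get $\kappa(H(\tilde x)+w)\leqq_K\kappa(H(\tilde x))$, so $\Phi_2(\tilde x,0_X,w,z)\leqq_K\Phi_2(\tilde x,0_X,0_W,0_Z)$. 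Thus $(C_0)$ holds with $\Phi=\Phi_2$, and a second invocation of Theorem~\ref{thm_12kk} gives stable strong duality for $({\rm CCVP})-({\rm CCVD}_\ell^2)$, which completes the proof.
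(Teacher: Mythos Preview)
Your proof is correct and follows essentially the same approach as the paper, which simply states that the proof is similar to that of Corollary~\ref{corol86}. You have filled in the details appropriately: verifying the $K$-convexity of $\Phi_2$, identifying $(C_1^2)$ as the specialization of $(C_1)$ to $\Phi=\Phi_2$, applying Theorem~\ref{thm_12kk}, and then checking $(C_0)$ for $\Phi_2$ via the feasibility assumption \eqref{eq:42_newew} and the $(P,K)$-nondecreasing property of $\kappa$ exactly as the paper does for $\Phi_1$ in Corollary~\ref{corol86}.
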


\begin{proof}   The proof is similar to that of  Corollary \ref{corol86}. 
\end{proof}

A version of vector Farkas-lemma  specialized from Theorem \ref{thm_PFL1} when  $\Phi=\Phi_2$ reads as:
\begin{corollary}[Stable Farkas lemma for constrained composite vector systems 2]
\label{cor:6.6www}
 Consider the  following statements:  

$({\rm a}_2)$ {$\epi (F+\kappa\circ H+I_A)^\ast = \mathcal{M}^2$},

$({\rm b}_2)$  {$\epi (F+\kappa\circ H+I_A)^\ast =  \mathcal{M}^2_+$}, 

$({\rm c}_2)$ For all {$(L,y)\in \L(X,Y)\times Y$}, two following assertions are equivalent

\hskip1cm
$(\alpha')$ $x\in C,\;  G(x)\in -S\; \Longrightarrow\;\; F(x)+(\kappa\circ H)(x)-L(x)+y\notin -\inte K.$

\hskip1cm $(\delta')$ $\exists L'\in \L(X,Y),\; T_1\in \dom \kappa^\ast,\; T_2\in \L_+^w(S,K)$ such that 
 $$F(x)-L'(x)+(T_1\circ H)(x')+(T_2\circ G)(x')-(L-L')(x')+y\notin \kappa^\ast(T_1)+T_2(-S)-\inte K$$ 
for all $(x,x')\in X\times C.$

$({\rm d}_2)$ For all {$(L,y)\in \L(X,Y)\times Y$}, two following assertions are equivalent

\hskip1cm $(\alpha')$ $\Phi(x,0_Z)-L(x)+y\notin -\inte K,\; \forall x\in X,$

\hskip1cm $(\epsilon')$ $\exists L'\in \L(X,Y),\; \exists T_1\in \dom \kappa^\ast\cap\L_+(P,K),\; T_2\in \L_+(S,K)$ such that 
$$F(x)-L'(x)+(T_1\circ H)(x')+(T_2\circ G)(x')-(L-L')(x')+y\notin \kappa^\ast(T_1)-\inte K$$ 
\null\hskip1cm for all $(x,x')\in X\times C.$

\noindent Then $[({\rm a}_2)\!   \Leftrightarrow \!({\rm c}_2)]$ and  $[({\rm b}_2)\! \Leftrightarrow\! ({\rm d}_2)]$. 
\end{corollary}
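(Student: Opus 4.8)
The plan is to read Corollary~\ref{cor:6.6www} off Theorem~\ref{thm_PFL1} applied to the perturbation mapping $\Phi=\Phi_2$ (with the roles of $Z$ and $S$ there played by $\tilde Z=X\times W\times Z$ and $\tilde S=\{0_X\}\times P\times S$). First I would invoke the three facts recorded just before the statement: $\Phi_2(x,0_X,0_W,0_Z)=F(x)+(\kappa\circ H)(x)+I_A(x)$ for all $x$, so that $\epi\Phi_2(\cdot,0_{\tilde Z})^\ast=\epi(F+\kappa\circ H+I_A)^\ast$; and the sets $\M$, $\M_+$ of \eqref{eq:14_nwewewE} attached to $\Phi_2$ are exactly $\M^2$ and $\M^2_+$ of \eqref{eq:A2}--\eqref{eq:A2l}. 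Hence $({\rm a}_2)$ and $({\rm b}_2)$ are literally the statements ${\rm(a)}$ and ${\rm(b)}$ of Theorem~\ref{thm_PFL1} for $\Phi=\Phi_2$, and the whole corollary will follow from $[{\rm(a)}\Leftrightarrow{\rm(c)}]$ and $[{\rm(b)}\Leftrightarrow{\rm(d)}]$ of that theorem once I check that, for this $\Phi$, assertion $(\alpha)$ of Theorem~\ref{thm_PFL1} is $(\alpha')$, assertion $(\beta)$ is $(\delta')$, and assertion $(\gamma)$ is $(\epsilon')$.

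The identification $(\alpha)\Leftrightarrow(\alpha')$ is immediate: since $I_A$ is $0_Y$ on $A:=C\cap G^{-1}(-S)$ and $+\infty_Y$ elsewhere, the condition $\Phi_2(x,0_{\tilde Z})-L(x)+y\notin-\inte K$ for all $x\in X$ is vacuous off $A$ and on $A$ becomes $F(x)+(\kappa\circ H)(x)-L(x)+y\notin-\inte K$. For $(\beta)\Leftrightarrow(\delta')$ I would write $T=(L',T_1,T_2)$; by \eqref{epiF*}, $(\beta)$ for $\Phi_2$ says $(L,T,y)\in\epi\Phi_2^\ast$, i.e.\ $y\in\Phi_2^\ast(L,T)+K$. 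Substituting the formula $\Phi_2^\ast(L,T)=F^\ast(L')\uplus(T_1\circ H+T_2\circ G+I_C)^\ast(L-L')\uplus\kappa^\ast(T_1)\uplus I^\ast_{-S}(T_2)$ recorded before the statement, writing $U\uplus V=\wsup(U+V)$, and using Proposition~\ref{pro_decomp}(v) together with Proposition~\ref{pro_5hh}(ii) (so that $\wsup W+K=Y\setminus(W-\inte K)$) and Proposition~\ref{pro_decomp}(iv) (so that $W-\inte K=\wsup W-\inte K$), this turns into $y\notin\bigl(F^\ast(L')+(T_1\circ H+T_2\circ G+I_C)^\ast(L-L')+\kappa^\ast(T_1)+I^\ast_{-S}(T_2)\bigr)-\inte K$. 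Peeling each conjugate back to the underlying set via Proposition~\ref{pro_decomp}(iv) ($F^\ast(L')$ to $\{L'(x)-F(x):x\in X\}$, $(T_1\circ H+T_2\circ G+I_C)^\ast(L-L')$ to $\{(L-L')(x')-(T_1\circ H)(x')-(T_2\circ G)(x'):x'\in C\}$, and $I^\ast_{-S}(T_2)$ to $T_2(-S)$) and rewriting a Minkowski sum as a union over its summands yields exactly the pointwise inequality $(\delta')$ quantified over $(x,x')\in X\times C$. It remains to match the index sets: $T\in\L_{\Phi_2}$ forces $L'\in\dom F^\ast$, $T_1\in\dom\kappa^\ast$, and $T_2\in\dom I^\ast_{-S}=\L_+^w(S,K)$ by \eqref{domI*}, while conversely the inequality in $(\delta')$ itself forces $L'\in\dom F^\ast$; so $\exists\,T\in\L_{\Phi_2}$ and $\exists\,L'\in\L(X,Y),\,T_1\in\dom\kappa^\ast,\,T_2\in\L_+^w(S,K)$ select the same instances. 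The equivalence $(\gamma)\Leftrightarrow(\epsilon')$ is proved in the same way with $\L_{\Phi_2}^+=\L_{\Phi_2}\cap\L_+(\tilde S,K)$; the only new point is that $T_2\in\L_+(S,K)$ gives $0_Y\in T_2(-S)\subset-K$, so Proposition~\ref{pro_decomp}(vii) lets the term $I^\ast_{-S}(T_2)$ (equivalently $T_2(-S)$) be dropped, which is precisely why $(\epsilon')$ carries no such term.

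The only place with genuine content is this last translation, $(\beta)\Leftrightarrow(\delta')$ and $(\gamma)\Leftrightarrow(\epsilon')$: one has to unwind the fourfold $\uplus$-sum into a pointwise vector inequality, keeping careful track of which $\wsup$'s may be stripped off through Proposition~\ref{pro_decomp}(iv)/(vii) (and of the harmless $-\infty_Y$-terms coming from points outside the domains), and verify that the constraint sets on $(L',T_1,T_2)$ produced by membership in $\L_{\Phi_2}$ (resp.\ $\L_{\Phi_2}^+$) coincide with those stated in $(\delta')$ (resp.\ $(\epsilon')$). Everything else is bookkeeping, and once it is in place Theorem~\ref{thm_PFL1} yields $[({\rm a}_2)\Leftrightarrow({\rm c}_2)]$ and $[({\rm b}_2)\Leftrightarrow({\rm d}_2)]$, which is the claim.
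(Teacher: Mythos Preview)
Your proof is correct and follows exactly the paper's approach: the paper introduces Corollary~\ref{cor:6.6www} with the single sentence ``A version of vector Farkas-lemma specialized from Theorem~\ref{thm_PFL1} when $\Phi=\Phi_2$ reads as:'' and gives no further argument, so the intended proof is precisely the specialization you carry out. Your additional work---unwinding the fourfold $\uplus$-sum via Proposition~\ref{pro_decomp}(iv)(v)(vii) and Proposition~\ref{pro_5hh}(ii), and checking that the parameter ranges in $\L_{\Phi_2}$ and $\L_{\Phi_2}^+$ match those in $(\delta')$ and $(\epsilon')$---fills in exactly the details the paper leaves to the reader.
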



\textit{\textbf{ The third   perturbation mapping $\Phi_3$: Fenchel-Lagrange duality II. }} 
Let    $\tilde Z:= X\times X\times W\times Z$ be   the space of  perturbation variables and define the  perturbation mapping $\Phi_3\colon X\times \tilde Z\to Y^\infty$,
\begin{equation}
\label{phi3}
\Phi_3(x,x',x''\!,w,z)=\begin{cases} F(x+x')+\kappa (H(x)+w), & \textrm{if } x+x''\in C,  G(x)+z\in -S\\
+\infty_Y,& \textrm{otherwise}.
\end{cases}
\end{equation}
We consider in $\tilde Z$ the cone $\tilde S:= \{0_X\}\times \{0_X\}\times P\times S$. Then  $\L_+(S,K)\cong \L(X,Y)\times \L(X,Y)\times \L_+(P,K)\times \L_+(S,K)$. We make some quick  observations:

$\bullet$ It can be checked that $\Phi_3(x,0_X,0_X,0_{W}, 0_{Z})=F(x)+(\kappa \circ H)(x)+I_A(x)$ for all $x\in X$ and 
$ \pi(\dom \Phi_3)  =-\Big\{(x,x,H(x),G(x)):x\in \dom H\cap\dom G\Big\}+\dom F\times C\times \dom \kappa  \times (-S).$

$\bullet$ For all $L\in \L(X,Y)$ and $T:=(L',L'',T_1,T_2)\in (\L(X,Y))^2\times \L(W,Y)\times \L(Z,Y)$, it holds
$\Phi_3^*(L,T)=F^\ast(L')\uplus(T_1\!\circ\! H+T_2\!\circ\! G)^*(L-L'-L'')\uplus I_C^\ast(L'')\uplus \kappa^\ast(T_1)\uplus I^*_{-S}(T_2),$
and if $T_2\in \L_+(S,K)$ then 
$\Phi_3^*(L,T)=F^\ast(L')\uplus(T_1\circ H+T_2\circ G)^*(L-L'-L'')
\uplus I_C^\ast(L'')\uplus\kappa^\ast(T_1).$

$\bullet$ The sets $\mathcal{M}$ and $\mathcal{M}_+$ (when  $\Phi=\Phi_3$) become, respectively
\begin{align*}
\mathcal{M}^3&:=
\bigcup_{\substack{T_1\in \dom \kappa^\ast\\ T_2\in \L_+^w(S,K)}}
\Psi(\exepi F^\ast \boxplus \exepi(T_1\!\circ\! H+T_2\!\circ\! G)^\ast\boxplus \exepi I_C^\ast\boxplus (0_\L, \kappa^\ast\!(T_1)\uplus I_{-\!S}^\ast\!(T_2)),
\\
\mathcal{M}^3_+&:=
\bigcup_{\substack{T_1\in \dom \kappa^\ast\cap\L_+(P,K)\\ T_2\in \L_+(S,K)}}
\Psi(\exepi F^\ast \boxplus \exepi(T_1\circ H+T_2\circ G)^\ast\boxplus \exepi I_C^\ast\boxplus (0_\L, \kappa^\ast(T_1)).
\end{align*}

$\bullet$ We now get two forms of  {Fenchel-Lagrange  dual problems} of $({\rm CCVP})$ by specializing $({\rm D})$ and $({\rm D}_\ell)$ with $\Phi=\Phi_3$:
\begin{align*}
({\rm CCVD}^3)&\; \hskip1.3em\mathop{\wsup}_{\substack{L',L''\in \L(X,Y)\\T_1\in \dom \kappa^\ast\\T_2\in \L_+^w(S,K)}} 
  \left[
-F^\ast(L')\uplus (T_1\!\circ\! H+T_2\!\circ\! G)^*(-\!L'\!-\!L'')\uplus I_C^\ast(L'')\uplus \kappa^\ast\!(T_1)\uplus I^*_{-\!S}\!(T_2)
\right],\\
({\rm CCVD}_\ell^3)&\; \mathop{\wsup}_{\substack{L',L''\in \L(X,Y)\\T_1\in \dom \kappa^\ast\cap \L_+(P,K)\\T_2\in \L_+(S,K)}}\hskip-1.3em
   \big[-F^\ast(L')\uplus (T_1\circ H+T_2\circ G)^*(-L'-L'')\uplus I_C^\ast(L'')\uplus \kappa^\ast(T_1)\big].
\end{align*}
Theorems  \ref{thm_9kk} and \ref{thm_12kk} applying to   $\Phi_3$ lead to  characterizations of  stable   strong  {Fenchel-Lagrange}  duality for  $({\rm CCVP})$:  

\begin{corollary}[Principle of stable strong duality for (CCVP)]\label{cor_7.8zzz2b}
Consider the following statements:

$({\rm a}_3)$ {$\epi (F+\kappa\circ H+I_A)^\ast =\mathcal{M}^3$},

$({\rm b}_3)$ {$\epi (F+\kappa\circ H+I_A)^\ast  =\mathcal{M}^3_+$},

$({\rm e}_3)$  The {stable} strong duality holds for the pair  $({\rm CCVP})-({\rm CCVD}^3)$, 

$({\rm f}_3)$  The {stable} strong duality holds for the pair  $({\rm CCVP})-({\rm CCVD}_\ell^3)$.

\noindent Then, it holds $[({\rm a}_3)\!\Leftrightarrow\! ({\rm e}_3)]$ and $[({\rm b}_3 )\!  \Leftrightarrow\! ({\rm f}_3)]$.
\end{corollary}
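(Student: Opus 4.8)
The plan is to recognize this corollary as an immediate instantiation of the abstract principle Theorem \ref{thm_9kk} with the specific choice $\Phi = \Phi_3$, so the real work has already been carried out in the preceding bullet-point discussion of $\Phi_3$ and in the proof of the analogous Corollary \ref{cor_7.8zzz}. First I would record the three facts that were established for $\Phi_3$ just before the statement: that $\Phi_3(x,0_X,0_X,0_W,0_Z) = F(x) + (\kappa\circ H)(x) + I_A(x)$ for all $x\in X$ (so that the objective mapping $\Phi_3(\cdot,0_{\tilde Z})$ of the associated problem $({\rm P})$ coincides with the objective of $({\rm CCVP})$); that the sets $\mathcal{M}$ and $\mathcal{M}_+$ of \eqref{eq:14_nwewewE}, computed for $\Phi = \Phi_3$, are exactly $\mathcal{M}^3$ and $\mathcal{M}^3_+$ as displayed; and that the dual problem $({\rm D})$ and loose dual problem $({\rm D}_\ell)$ of Section 4, specialized to $\Phi = \Phi_3$, become precisely $({\rm CCVD}^3)$ and $({\rm CCVD}_\ell^3)$ after rewriting $-\Phi_3^\ast(0_\L, T)$ via the conjugate formula $\Phi_3^\ast(L,T) = F^\ast(L')\uplus (T_1\circ H + T_2\circ G)^\ast(L-L'-L'')\uplus I_C^\ast(L'')\uplus \kappa^\ast(T_1)\uplus I^\ast_{-S}(T_2)$ and the fact that on $\L_{\Phi_3}^+$ the term $I^\ast_{-S}(T_2)$ drops out (by \eqref{domI*} and Proposition \ref{prop_1ab}(i)).

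With these identifications in hand, the statement $({\rm a}_3)$ is verbatim the statement $\mathrm{(a)}$ of Theorem \ref{thm_9kk} and $({\rm b}_3)$ is its statement $\mathrm{(b)}$; likewise $({\rm e}_3)$ and $({\rm f}_3)$ are the statements $\mathrm{(e)}$ and $\mathrm{(f)}$ for the pairs $({\rm P})-({\rm D})$ and $({\rm P})-({\rm D}_\ell)$ with $\Phi = \Phi_3$. Hence $[({\rm a}_3)\Leftrightarrow ({\rm e}_3)]$ and $[({\rm b}_3)\Leftrightarrow ({\rm f}_3)]$ follow directly from $[\mathrm{(a)}\Leftrightarrow\mathrm{(e)}]$ and $[\mathrm{(b)}\Leftrightarrow\mathrm{(f)}]$ of Theorem \ref{thm_9kk}. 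The proof is therefore one sentence of the form ``this follows from Theorem \ref{thm_9kk} applied with $\Phi = \Phi_3$, using the computations of $\Phi_3^\ast$, $\mathcal{M}^3$, $\mathcal{M}^3_+$, and the dual problems recorded above,'' mirroring verbatim the proof already given for Corollary \ref{cor_7.8zzz}.

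The only genuine content — and hence the only place an obstacle could hide — is the verification that the conjugate-calculus identity for $\Phi_3^\ast$ and the resulting formulas for $\mathcal{M}^3$, $\mathcal{M}^3_+$, $({\rm CCVD}^3)$, $({\rm CCVD}_\ell^3)$ are correct; but the excerpt states these are proved ``by the similar arguments as the ones for the first perturbation $\Phi_1$'' in Lemma \ref{lem:6.1_nwewewE}, so within this write-up they may be invoked. The one point worth double-checking while writing is that $\tilde Z = X\times X\times W\times Z$ genuinely plays the role of the perturbation space $Z$ of Section 4 and that $0_{\tilde Z} = (0_X,0_X,0_W,0_Z)\in \pi(\dom\Phi_3)$ — which holds because assumption \eqref{eq:42_newew} gives a point of $A\cap\dom F\cap H^{-1}(\dom\kappa)$, making $({\rm CCVP})$ feasible and $\Phi_3(\cdot,0_{\tilde Z})$ proper — so that Theorem \ref{thm_9kk} is applicable without extra hypotheses.
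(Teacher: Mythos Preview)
Your proposal is correct and matches the paper's approach exactly: the paper introduces Corollary \ref{cor_7.8zzz2b} with the sentence ``Theorems \ref{thm_9kk} and \ref{thm_12kk} applying to $\Phi_3$ lead to characterizations of stable strong Fenchel-Lagrange duality for $({\rm CCVP})$,'' and gives no separate proof, relying (as for Corollary \ref{cor_7.8zzz}) on the identifications $\Phi_3(\cdot,0_{\tilde Z})=F+\kappa\circ H+I_A$, $\mathcal{M}=\mathcal{M}^3$, $\mathcal{M}_+=\mathcal{M}^3_+$, and $({\rm D}),({\rm D}_\ell)=({\rm CCVD}^3),({\rm CCVD}_\ell^3)$ already recorded in the bullet points. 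Your additional remark verifying $0_{\tilde Z}\in\pi(\dom\Phi_3)$ via \eqref{eq:42_newew} is a nice explicit check that the paper leaves implicit.
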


Denote 
\begin{multline*}
Z_0^3:=\lin \Big[\{(x,x,H(x),G(x)):x\in \dom H\cap\dom G\}-\dom F\times C\times \dom \kappa  \times (-S)\Big].
\end{multline*}

\begin{corollary}[Stable strong duality for (CCVP)]
\label{corol73}
Assume that $F$ is $K$-convex, that $H$ is $P$-convex, that $G$ is $S$-convex,  that  $\kappa$ is $K$-convex and $(P,K)$ non-decreasing, and  that $C$ is convex. 
 Assume further that  the following condition holds:

\begin{tabular}{c | c}
$( C_1^3)$&  
\begin{minipage}{0.84\textwidth}
$\forall L\in \L(X,Y)$, $\exists y_L\in Y$,  $\exists W_L \in \mathcal{N} (0_X,0_X,0_{W},0_{Z})$  such that \\
$\forall (x'\!,x''\!, w,z)\in  W_L\!\cap \!Z_0^3,\; \exists x\!\in\! X: x\!+\!x''\!\in\! C, \ G(x)\!+\!z\in -S, \\ 
\phantom{mm} F(x+x')+ \kappa (H(x)+w)\!-\!L(x)\!\leqq_K\! y_L.$
\end{minipage}
\end{tabular}

\noindent Then, {stable} strong duality holds for the pairs $({\rm CCVP})-({\rm CCVD}^3)$ and  $({\rm CCVP})-({\rm CCVD}_\ell^3)$.
\end{corollary}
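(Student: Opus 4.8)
The plan is to follow the template already used for Corollary~\ref{corol86} and Corollary~\ref{corol72}: reduce the assertion to an application of Theorem~\ref{thm_12kk} with the perturbation mapping $\Phi=\Phi_3$, where $\tilde Z=X\times X\times W\times Z$ plays the role of the perturbation space and $Z_0^3$ the role of $Z_0$. Two facts must be established. First, the standing convexity and monotonicity hypotheses force $\Phi_3$ to be a $K$-convex mapping, so the hypothesis of Theorem~\ref{thm_12kk} on $\Phi$ is met. Second, the abstract condition $(C_1)$, read for $\Phi=\Phi_3$, is literally $(C_1^3)$. Granting these, Theorem~\ref{thm_12kk} yields stable strong duality for $({\rm P})-({\rm D})$ which, by the identities recorded before the statement (namely $\Phi_3(\cdot,0_X,0_X,0_W,0_Z)=F+\kappa\circ H+I_A$, the computation of $\M,\M_+$ giving $\M^3,\M^3_+$, and the formula for $\Phi_3^\ast(0_\L,\cdot)$), is precisely stable strong duality for $({\rm CCVP})-({\rm CCVD}^3)$; the loose–dual assertion then follows from the second part of Theorem~\ref{thm_12kk} once $(C_0)$ is checked for $\Phi_3$.

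For the $K$-convexity of $\Phi_3$, I would write, for all $(x,x',x'',w,z)\in X\times\tilde Z$,
$$\Phi_3(x,x',x'',w,z)=F(x+x')+\kappa(H(x)+w)+I_C(x+x'')+I_{-S}(G(x)+z),$$
an identity valid also at the points where one of the summands equals $+\infty_Y$, by the conventions in \eqref{2.4}. The term $F(x+x')$ is the composition of the $K$-convex $F$ with a linear map, hence $K$-convex; the term $I_C(x+x'')$ is the indicator of the convex set $\{(x,x',x'',w,z):x+x''\in C\}$, and $I_{-S}(G(x)+z)$ is the indicator of $\{(x,x',x'',w,z):G(x)+z\in-S\}$, which is convex since $G$ is $S$-convex (this set is the image of $\epi_S G$ under a linear map); finally $(x,w)\mapsto H(x)+w$ is $P$-convex because $H$ is $P$-convex, and the composition of the $K$-convex, $(P,K)$-nondecreasing map $\kappa$ with a $P$-convex map is $K$-convex. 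A sum of $K$-convex mappings is $K$-convex, so $\Phi_3$ is $K$-convex.

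Next I would note that $Z_0:=\lin(\pi(\dom\Phi_3))$ equals $Z_0^3$: since $\pi(\dom\Phi_3)=-\{(x,x,H(x),G(x)):x\in\dom H\cap\dom G\}+\dom F\times C\times\dom\kappa\times(-S)$ and $\lin(-U)=\lin U$, the two linear hulls coincide. Unraveling $(C_1)$ for $\Phi=\Phi_3$: for $(x',x'',w,z)\in\tilde Z$ and $x\in X$, the inequality $\Phi_3(x,x',x'',w,z)-L(x)\leqq_K y_L$ holds exactly when $x+x''\in C$, $G(x)+z\in-S$ and $F(x+x')+\kappa(H(x)+w)-L(x)\leqq_K y_L$; substituting this into the statement of $(C_1)$ reproduces verbatim $(C_1^3)$. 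Hence Theorem~\ref{thm_12kk} applies and gives stable strong duality for $({\rm CCVP})-({\rm CCVD}^3)$. For the loose dual, pick $\tilde x$ in the set $A\cap\dom F\cap H^{-1}(\dom\kappa)$, nonempty by \eqref{eq:42_newew}; then $(\tilde x,0_X,0_X,0_W,0_Z)\in\dom\Phi_3$, and for any $(0_X,0_X,w,z)\in-\tilde S$ (so $w\in-P$ and $z\in-S$) we have $G(\tilde x)+z\in-S$, whence $\Phi_3(\tilde x,0_X,0_X,w,z)=F(\tilde x)+\kappa(H(\tilde x)+w)\leqq_K F(\tilde x)+\kappa(H(\tilde x))=\Phi_3(\tilde x,0_X,0_X,0_W,0_Z)$, the inequality using $w\in-P$ and the $(P,K)$-nondecreasing property of $\kappa$. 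Thus $(C_0)$ holds for $\Phi_3$, and the second part of Theorem~\ref{thm_12kk} yields stable strong duality for $({\rm CCVP})-({\rm CCVD}_\ell^3)$.

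The routine but slightly delicate part is the bookkeeping in the third paragraph: keeping the four groups of perturbation variables straight and matching $Z_0$ with $Z_0^3$. The only genuinely structural point is that the composition $\kappa\circ(H(\cdot)+\,\cdot\,)$ is $K$-convex, for which the $(P,K)$-monotonicity of $\kappa$ is indispensable — without it $\Phi_3$ need not be $K$-convex and Theorem~\ref{thm_12kk} would not apply.
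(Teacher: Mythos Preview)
Your proposal is correct and follows exactly the approach the paper intends: the paper gives no explicit proof for Corollary~\ref{corol73}, relying on the template of Corollary~\ref{corol86} (via Corollary~\ref{corol72}, whose proof is ``similar to that of Corollary~\ref{corol86}''), namely verifying that $\Phi_3$ is $K$-convex, that $(C_1^3)$ is $(C_1)$ read for $\Phi_3$, and that $(C_0)$ holds for $\Phi_3$ with the cone $\tilde S=\{0_X\}^2\times P\times S$, then invoking Theorem~\ref{thm_12kk}. Your write-up is in fact more detailed than the paper's own treatment, and the bookkeeping (the identification $Z_0=Z_0^3$ and the verification of $(C_0)$ using $\tilde x\in A\cap\dom F\cap H^{-1}(\dom\kappa)$) is handled correctly.
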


Theorem \ref{thm_PFL1} with   $\Phi=\Phi_3$,   leads to: 
\begin{corollary}[Stable Farkas lemma for constrained composite vector systems 3]
\label{cor:6.9www}
 Consider the  following statements:  

$({\rm a}_3)$ {$\epi (F+\kappa\circ H+I_A)^\ast =\mathcal{M}^3$},

$({\rm b}_3)$ {$\epi (F+\kappa\circ H+I_A)^\ast = \mathcal{M}^3_+$}, 

$({\rm c}_3)$ For all {$(L,y)\in \L(X,Y)\times Y$}, two following assertions are equivalent

\hskip1cm $(\alpha')$ $x\in C,\;  G(x)\in -S\; \Longrightarrow\;\; F(x)+(\kappa\circ H)(x)-L(x)+y\notin -\inte K.$

\hskip1cm $(\zeta')$ $\exists L',L''\in \L(X,Y),\; T_1\in \dom \kappa^\ast,\; T_2\in \L_+^w(S,K)$ such that 
 $$F(x)-L'(x)+(T_1\circ H)(x')+(T_2\circ G)(x')-(L-L'-L'')(x')-L''(x'')+y\notin \kappa^\ast(T_1)+T_2(-S)-\inte K$$ 
for all $(x,x',x'')\in X\times X\times C.$

$({\rm d}_3)$ For all {$(L,y)\in \L(X,Y)\times Y$}, two following assertions are equivalent

\hskip1cm $(\alpha')$ $\Phi(x,0_Z)-L(x)+y\notin -\inte K,\; \forall x\in X,$

\hskip1cm $(\eta')$ $\exists L',L''\in \L(X,Y),\; \exists T_1\in \dom \kappa^\ast\cap\L_+(P,K),\; T_2\in \L_+(S,K)$ such that 
 $$F(x)-L'(x)+(T_1\circ H)(x')+(T_2\circ G)(x')-(L-L'-L'')(x')-L''(x'')+y\notin \kappa^\ast(T_1)-\inte K$$ 
for all $(x,x',x'')\in X\times X\times C.$

	\noindent Then $[({\rm a}_3)  \! \Leftrightarrow\! ({\rm c}_3)]$ and  $[({\rm b}_3)\! \Leftrightarrow\! ({\rm d}_3)]$. 
\end{corollary}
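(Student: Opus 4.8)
The plan is to deduce this corollary as a direct specialization of the general principle in Theorem \ref{thm_PFL1}, applied to the third perturbation mapping $\Phi_3$. First I would invoke the identities already recorded in the bullet list preceding the corollary: namely that $\Phi_3(x,0_X,0_X,0_W,0_Z)=F(x)+(\kappa\circ H)(x)+I_A(x)$, so that $\epi\Phi_3(\cdot,0_{\tilde Z})^{*}=\epi(F+\kappa\circ H+I_A)^{*}$; and that, for $\Phi=\Phi_3$, the sets $\mathcal{M}$ and $\mathcal{M}_+$ of \eqref{eq:14_nwewewE} coincide with $\mathcal{M}^3$ and $\mathcal{M}^3_+$ respectively. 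With these substitutions, statements $({\rm a}_3)$ and $({\rm b}_3)$ are literally statements $({\rm a})$ and $({\rm b})$ of Theorem \ref{thm_PFL1} in the special case $\Phi=\Phi_3$.

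Next I would check that $({\rm c}_3)$ is exactly statement $({\rm c})$ of Theorem \ref{thm_PFL1} rewritten for $\Phi_3$, and similarly $({\rm d}_3)$ is statement $({\rm d})$. For this, take $(L,y)\in\L(X,Y)\times Y$. On the one hand, since $\Phi_3(x,0_{\tilde Z})=F(x)+(\kappa\circ H)(x)+I_A(x)$, the condition ``$\Phi_3(x,0_{\tilde Z})-L(x)+y\notin-\inte K$ for all $x\in X$'' is precisely ``$x\in C$, $G(x)\in -S\Rightarrow F(x)+(\kappa\circ H)(x)-L(x)+y\notin-\inte K$'', i.e. $(\alpha')$. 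On the other hand, the inner condition $(\beta)$ in Theorem \ref{thm_PFL1}, namely the existence of $T\in\L_{\Phi_3}$ with $\Phi_3(x,\tilde z)-L(x)-T(\tilde z)+y\notin-\inte K$ for all $(x,\tilde z)$, must be unwound: writing $T=(L',L'',T_1,T_2)$ and using the formula for $\Phi_3^{*}(L,T)$ together with \eqref{epiF*}, one sees that $(L,T,y)\in\epi\Phi_3^{*}$ is equivalent to $(\zeta')$ — here one feeds in the explicit form of $\Phi_3$, evaluates $-T(\tilde z)$ over $\tilde z=(x',x'',w,z)$, optimizes over $w$ to produce $\kappa^{*}(T_1)$, over $z\in -S$ (or with $T_2\in\L_+(S,K)$) to produce the $T_2(-S)$ term (resp. to absorb it into $K$), and over $x''$ against $I_C$. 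The membership $T\in\L_{\Phi_3}$ translates into $T_1\in\dom\kappa^{*}$ and $T_2\in\L_+^{w}(S,K)$ via \eqref{domI*}; the ``loose'' version replaces $\L_{\Phi_3}$ by $\L_{\Phi_3}^{+}$, which by the computation of $\Phi_3^{*}$ restricted to $T_2\in\L_+(S,K)$ (and $T_1\in\L_+(P,K)$) gives $(\eta')$.

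Having identified $({\rm a}_3)\equiv({\rm a})$, $({\rm b}_3)\equiv({\rm b})$, $({\rm c}_3)\equiv({\rm c})$, $({\rm d}_3)\equiv({\rm d})$ for $\Phi=\Phi_3$, the conclusions $[({\rm a}_3)\Leftrightarrow({\rm c}_3)]$ and $[({\rm b}_3)\Leftrightarrow({\rm d}_3)]$ are immediate from the two equivalences $[({\rm a})\Leftrightarrow({\rm c})]$ and $[({\rm b})\Leftrightarrow({\rm d})]$ of Theorem \ref{thm_PFL1}. The only genuine work, and the step I expect to be the main obstacle, is the bookkeeping in the previous paragraph: carefully verifying that the abstract inner condition $(\beta)$ (resp. $(\gamma)$) for $\Phi_3$ really does simplify to the concrete inequality $(\zeta')$ (resp. $(\eta')$) — in particular tracking how the four blocks of perturbation variables $x',x'',w,z$ and the four blocks of multipliers $L',L'',T_1,T_2$ pair up, and confirming that the partial suprema over $w$, $z$, $x''$ reproduce exactly the conjugate terms $\kappa^{*}(T_1)$, $I_{-S}^{*}(T_2)$ (equivalently the $T_2(-S)$ shift), $I_C^{*}(L'')$. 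Since the corresponding computation of $\Phi_3^{*}$ and of $\mathcal{M}^3$, $\mathcal{M}^3_+$ has already been carried out in the bullet list (and is parallel to the fully detailed Lemma \ref{lem:6.1_nwewewE} for $\Phi_1$), I would simply cite those identities and keep this verification short.
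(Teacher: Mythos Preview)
Your proposal is correct and matches the paper's approach exactly: the paper simply states that ``Theorem \ref{thm_PFL1} with $\Phi=\Phi_3$, leads to'' this corollary, relying on the identifications $\Phi_3(\cdot,0_{\tilde Z})=F+\kappa\circ H+I_A$, $\mathcal{M}=\mathcal{M}^3$, $\mathcal{M}_+=\mathcal{M}^3_+$ already recorded in the bullet list. Your plan is in fact more detailed than the paper's one-line justification, spelling out the bookkeeping that matches $(\beta),(\gamma)$ to $(\zeta'),(\eta')$.
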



\begin{remark}\label{rem712}
It worth noting that there may have  other ways to define perturbation mappings for (CCVP), and then, more representations of $\epi (F+\kappa\circ H+I_A)^\ast $,  more corresponding results on duality for (CCVP),  vector Farkas lemmas can be derived. For instance, take $\tilde Z := X^3\times W\times Z$,    $\tilde S:=\{0_X\}^3\times P\times S$,   
$\Phi_4\colon X^4\times W\times Z\to Y^\infty$, 
\begin{equation*}
\Phi_4(x,x'\!,x''\!,x'''\!,w,z)=\begin{cases} F(x\!+\!x')+\kappa (H\!_1(x\!+\!x'')\!+\!w), & \textrm{if } x\!+\!x'''\!\in \!C,   G(x)\!+\!z\!\in\! -\!S, \\
+\infty_Y,& \textrm{otherwise}.
\end{cases}
\end{equation*}
This  perturbation mapping $\Phi_4 $  leads to new types of dual problems of (CCVP): 
\begin{align*}
({\rm CCVD}^4)&\; \hskip1.3em\mathop{\wsup}_{\substack{L',L'',L'''\in \L(X,Y)\\T_1\in \dom \kappa^\ast\\T_2\in \L_+^w(S,K)}} 
  \left[
\begin{aligned}
-F^\ast(L')\uplus (T_1\!\circ\! H)^\ast(L'')\uplus(T_2\!\circ\! G)^*(-\!L'\!-\!L''\!-\!L''')\uplus\\\uplus I_C^\ast(L''')\uplus \kappa^\ast\!(T_1) \uplus I^*_{-\!S}\!(T_2)
\end{aligned}\right],\\
({\rm CCVD}_\ell^4)&\; \mathop{\wsup}_{\substack{L',L'',L'''\in \L(X,Y)\\T_1\in \dom \kappa^\ast\cap \L_+(P,K)\\T_2\in \L_+(S,K)}}\hskip0.3em
    \left[
\begin{aligned}
-F^\ast(L')\uplus (T_1\!\circ\! H)^\ast(L'')\uplus(T_2\!\circ\! G)^*(-\!L'\!-\!L''\!-\!L''')\uplus\\\uplus I_C^\ast(L''')\uplus \kappa^\ast\!(T_1)
\end{aligned}\right],  
\end{align*}
and under suitable conditions, one can get stable strong duality results  for pairs $({\rm CCVP})-({\rm CCVD}^4)$ and $({\rm CCVP})-({\rm CCVD}_\ell^4)$, and some forms of vector Farkas lemmas as well.  
\end{remark}

\section{Special cases of (CCVP)}


\subsection{Lagrange and Fenchel-Lagrange Duality for Composite Vector Problems}

We are concerned with the 
{\it composite  vector problem}
\cite{CDLP20}
\begin{align*}
({\rm CVP})&\qquad \winf \{F(x)+(\kappa \circ H)(x):x\in X\}
\end{align*}
where  $X, Y, W$,  $P$, $F$,  $\kappa$, $H$  are  as in Section 5. Assume that $\dom F\cap H^{-1}(\dom \kappa)\ne \emptyset$. It is clear that 
 (CVP) is a special case of (CCVP), and  
 $\L(Z,Y)=\L_+(S,K)=\L(Y,Y)$ and $I^\ast_{-S}(T)=-\bd K$ for all $T\in \L(Y,Y)$.
The dual  problems $({\rm CCVD}^1)$ and $({\rm CCVD}^1_\ell)$ now become
\begin{eqnarray*}
({\rm CVD}^1) && \hskip2em\mathop{\wsup}_{\substack{T\in\dom \kappa^\ast}} \hskip2em
 \mathop{\winf}\limits_{(x,z)\in X\times Z} [F(x)+T\circ H(x) +\kappa(z) - T(z)],  \\
({\rm CVD}_\ell^1) &&   \mathop{\wsup}_{\substack{T\in\dom \kappa^\ast\cap \L_+(P,K)}} \mathop{\winf}\limits_{(x,z)\in X\times Z} [F(x)+T\circ H(x) +\kappa(z) - T(z)], 
\end{eqnarray*}
which are  nothing else but the Lagrange and loose Lagrange dual problems $(\rm CVD)$, $({\rm CVD}_l)$ introduced recently in  \cite{CDLP20} while the problems $({\rm CCVD}^2)$ (and $({\rm CCVD}^3))$  and $({\rm CCVD}^2_\ell)$  (and $({\rm CCVD}^3_\ell)$) lead to new {\it Fenchel-Lagrange  dual problems} of $({\rm CVP})$: 
\begin{align*}
({\rm CVD}^2)&\quad \hskip2em\mathop{\wsup}_{\substack{L'\in \L(X,Y)\\T\in \dom \kappa^\ast}} \hskip2em
  \big[-F^\ast(L')\uplus (T\circ H)^*(-L')\uplus \kappa^\ast(T) \big],\\
({\rm CVD}_\ell^2)&\quad \mathop{\wsup}_{\substack{L'\in \L(X,Y)\\T\in \dom \kappa^\ast\cap \L_+(P,K)}}   \big[-F^\ast(L')\uplus (T\circ H)^*(-L')\uplus \kappa^\ast(T)\big].
\end{align*}
The sets $\mathcal{M}^1$, $\mathcal{M}^1_+$, $\mathcal{M}^2$ (or $\mathcal{M}^3$), and  $\mathcal{M}^2_+$ (or $\mathcal{M}^3_+$) reduces to, respectively
\begin{align*}
\mathcal{A}^1&:=\hskip1.9em\bigcup_{\substack{T\in \dom \kappa^\ast}}\hskip1.9em
\Psi(\exepi (F+T\circ H)^\ast\boxplus (0_\L, \kappa^\ast(T)),\\
\mathcal{A}^1_+&:=\bigcup_{\substack{T\in \dom \kappa^\ast\cap \L_+(P,K)}}
\Psi(\exepi (F+T\circ H)^\ast\boxplus (0_\L, \kappa^\ast(T)),\\
\mathcal{A}^2&:=\hskip1.9em\bigcup_{\substack{T\in \dom \kappa^\ast}}\hskip1.9em
\Psi(\exepi F\boxplus \exepi(T\circ H)^\ast\boxplus (0_\L, \kappa^\ast(T)),\\
\mathcal{A}^2_+&:=\bigcup_{\substack{T\in \dom \kappa^\ast\cap \L_+(P,K)}}
\Psi(\exepi F\boxplus \exepi(T\circ H)^\ast\boxplus (0_\L, \kappa^\ast(T)).
\end{align*}
Observe that $\mathcal{A}^1$ and $\mathcal{A}^1_+$ are respectively the sets  $\A$ and $\B$  in  \cite{CDLP20}.
The next corollary  is a direct consequence of  Corollaries \ref{cor_7.8zzz} and \ref{cor_7.8zzz2} while Corollary \ref{cor:72www} follows directly from Corollaries \ref{cor:6.3www}, \ref{cor:6.6www}.

\begin{corollary}\label{cor:7.1}
Let $i\in\{1,2\}$. 
Consider the following statements:

$({\rm a}^1_i)$ $ \epi (F+\kappa\circ H)^\ast = \mathcal{A}^i$,

$({\rm b}^1_i)$ $\epi (F+\kappa\circ H)^\ast = \mathcal{A}^i_+$, 

$({\rm e}^1_i)$  The stable strong duality holds for the pair  $({\rm CVP})-({\rm CVD}^i)$, 

$({\rm f}^1_i)$  The stable strong duality holds for the pair  $({\rm CVP})-({\rm CVD}_\ell^i)$.

\noindent
Then, for all $i\in\{1,2\}$,  it holds $[({\rm a}_2^i)\Longleftrightarrow ({\rm e}_2^i)]$ and $[({\rm b}_2^i )  \Longleftrightarrow ({\rm f}_2^i)]$.
\end{corollary}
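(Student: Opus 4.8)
The plan is to derive this purely by specialization, since (CVP) sits inside (CCVP) as the unconstrained case. Concretely, I would realize (CVP) as the instance of (CCVP) in which $C=X$, the constraint space is $Z=Y$, the cone is $S=\{0_Z\}$, and $G\equiv 0_Z$; then $A=C\cap G^{-1}(-S)=X$, $\L_+(S,K)=\L(Y,Y)$, and, as recorded just before the corollary, $I_{-S}^\ast(T)=-\bd K$ for every $T\in\L(Y,Y)$. The standing hypothesis $\dom F\cap H^{-1}(\dom\kappa)\ne\emptyset$ is exactly condition \eqref{eq:42_newew} for this instance, so the whole apparatus of Section 5 applies to $\Phi_1$ and $\Phi_2$.

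First I would check that under this identification the perturbation mappings collapse as needed: $\Phi_1(\cdot,0_W,0_Z)=F+\kappa\circ H+I_X=F+\kappa\circ H$, and likewise $\Phi_2(\cdot,0_X,0_W,0_Z)=F+\kappa\circ H$. Next I would verify the reduction of the auxiliary sets. For $i=1$, in the expression defining $\mathcal{M}^1$ the factor $\kappa^\ast(T_1)\uplus I_{-S}^\ast(T_2)$ becomes $\kappa^\ast(T_1)\uplus(-\bd K)=\kappa^\ast(T_1)$ by Proposition \ref{prop_1ab}(i), while $T_2\circ G\equiv 0_Y$ drops out of $F+T_1\circ H+T_2\circ G+I_C=F+T_1\circ H$; hence $\mathcal{M}^1=\mathcal{A}^1$, and restricting $T_1$ to $\dom\kappa^\ast\cap\L_+(P,K)$ gives $\mathcal{M}^1_+=\mathcal{A}^1_+$. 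The same bookkeeping yields $\mathcal{M}^2=\mathcal{A}^2$ and $\mathcal{M}^2_+=\mathcal{A}^2_+$. Carrying the identical substitutions through the formulas for $({\rm CCVD}^i)$ and $({\rm CCVD}_\ell^i)$ produces precisely the dual problems $({\rm CVD}^i)$ and $({\rm CVD}_\ell^i)$ displayed above.

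With these identifications, the statements $({\rm a}^1_i),({\rm b}^1_i),({\rm e}^1_i),({\rm f}^1_i)$ for (CVP) are literally the statements $({\rm a}_i),({\rm b}_i),({\rm e}_i),({\rm f}_i)$ for (CCVP) in the instance $\Phi=\Phi_i$. Therefore Corollary \ref{cor_7.8zzz} (the case $\Phi=\Phi_1$) gives $[({\rm a}^1_1)\Leftrightarrow({\rm e}^1_1)]$ and $[({\rm b}^1_1)\Leftrightarrow({\rm f}^1_1)]$, and Corollary \ref{cor_7.8zzz2} (the case $\Phi=\Phi_2$) gives $[({\rm a}^1_2)\Leftrightarrow({\rm e}^1_2)]$ and $[({\rm b}^1_2)\Leftrightarrow({\rm f}^1_2)]$, which is the assertion. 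The only genuine work is the second paragraph — confirming that the $I_{-S}^\ast$-terms and the $T_2\circ G$-terms vanish so that $\mathcal{M}^i$ and the dual problems really reduce to $\mathcal{A}^i$ and $({\rm CVD}^i)$; once that routine bookkeeping is in place, no further argument is required and the proof is just a citation of the two corollaries.
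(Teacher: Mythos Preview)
Your proposal is correct and follows essentially the same approach as the paper, which simply states that the corollary is a direct consequence of Corollaries~\ref{cor_7.8zzz} and~\ref{cor_7.8zzz2}. Your explicit bookkeeping (verifying that $I_{-S}^\ast(T_2)=-\bd K$ is absorbed via Proposition~\ref{prop_1ab}(i) and that $T_2\circ G$ vanishes so $\mathcal{M}^i$ and the dual problems reduce to $\mathcal{A}^i$ and $({\rm CVD}^i)$) merely spells out what the paper leaves implicit in the discussion preceding the corollary.
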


\begin{corollary}[Stable Farkas lemma for systems of involving composite mappings]
\label{cor:72www}
For each $(L,y)\in \L(X,Y)\times Y$, consider the  following assertions:  

$(\alpha^1)$ $F(x)+(\kappa\circ H)(x)-L(x)+y\notin -\inte K,\; \forall x\in X,$

$(\beta^1)$ $\exists T\in \dom \kappa^\ast:F(x)+(T\circ H)(x)-L(x)+y\notin \kappa^\ast(T)-\inte K,\quad \forall x\in C,$

$(\gamma^1)$ $\exists T\in \dom \kappa^\ast\cap\L_+(P,K):F(x)+(T\circ H)(x)-L(x)+y\notin \kappa^\ast(T)-\inte K,\quad \forall x\in C,$

$(\delta^1)$ $\exists L'\in \L(X,Y),\; T\in \dom \kappa^\ast$ such that 
 $$F(x)-L'(x)+(T\circ H)(x') -(L-L')(x')+y\notin \kappa^\ast(T)-\inte K,\; 
\forall (x,x')\in X\times C,$$

$(\epsilon^1)$ $\exists L'\in \L(X,Y)\cap\L_+(P,K),\; T\in \dom \kappa^\ast$ such that 
 $$F(x)-L'(x)+(T\circ H)(x') -(L-L')(x')+y\notin \kappa^\ast(T)-\inte K,\; 
\forall (x,x')\in X\times C.$$
\noindent Then, one has: 

$\rm(i)$\ \ $({\rm a}^1_1) \Longleftrightarrow \big[(\alpha_1)\!\Leftrightarrow\! (\beta^1),\ \forall (L,y)\in \L(X,Y)\times Y\big]$,

$\rm(ii)$\ 
 $({\rm b}^1_1) \Longleftrightarrow \big[(\alpha_1)\!\Leftrightarrow \!(\gamma^1),\ \forall (L,y)\in \L(X,Y)\times Y\big]$, 

$\rm(iii)$\
 $({\rm a}^1_2) \Longleftrightarrow \big[(\alpha_1)\!\Leftrightarrow\! (\delta^1),\ \forall (L,y)\in\L(X,Y)\times Y\big]$,

$\rm(iv)$\ $({\rm b}^1_2) \Longleftrightarrow \big[(\alpha_1)\!\Leftrightarrow \!(\epsilon^1),\ \forall (L,y)\in \L(X,Y)\times Y\big]$, \\
where $({\rm a}^1_i)$, $({\rm b}^1_i)
$, $ i = 1, 2$ are as in Corollary \ref{cor:7.1}.
\end{corollary}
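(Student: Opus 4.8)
The plan is to derive Corollary~\ref{cor:72www} directly from Corollary~\ref{cor:7.1} by showing that each of the assertions $({\rm c}_i)$ and $({\rm d}_i)$ appearing in Corollaries~\ref{cor:6.3www} and~\ref{cor:6.6www} collapses, in the unconstrained composite setting $({\rm CVP})$, to the corresponding pair of equivalences $[(\alpha^1)\Leftrightarrow(\beta^1)]$, $[(\alpha^1)\Leftrightarrow(\gamma^1)]$, $[(\alpha^1)\Leftrightarrow(\delta^1)]$, $[(\alpha^1)\Leftrightarrow(\epsilon^1)]$. Since $({\rm CVP})$ is the instance of $({\rm CCVP})$ obtained by taking $Z$ trivial (so that $G\equiv 0_Z$, $S=\{0_Z\}$, $C=X$, and $A=X$), the constraint predicate ``$x\in C,\ G(x)\in-S$'' in $(\alpha')$ becomes vacuous and $(\alpha')$ reduces to $(\alpha^1)$. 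At the same time, as recorded in the text preceding the corollary, $\L(Z,Y)=\L_+(S,K)=\L(Y,Y)$ and $I^\ast_{-S}(T)=-\bd K$ for every $T$, so every occurrence of $T_2\circ G$ vanishes and every $\uplus I^\ast_{-S}(T_2)$ or ``$+T_2(-S)$'' term disappears by Proposition~\ref{prop_1ab}(i). With these substitutions, $({\rm c}_1)$ of Corollary~\ref{cor:6.3www} becomes exactly $[(\alpha^1)\Leftrightarrow(\beta^1)]$, $({\rm d}_1)$ becomes $[(\alpha^1)\Leftrightarrow(\gamma^1)]$, $({\rm c}_2)$ of Corollary~\ref{cor:6.6www} becomes $[(\alpha^1)\Leftrightarrow(\delta^1)]$, and $({\rm d}_2)$ becomes $[(\alpha^1)\Leftrightarrow(\epsilon^1)]$ (in the last one using that $\L_+(P,K)\cap\L(X,Y)$ reads as the $L'$-constraint after the evident re-indexing of which operator carries the $(P,K)$-positivity).

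Next I would match the epigraph conditions. By construction $\mathcal{M}^i$ and $\mathcal{M}^i_+$ specialize to $\mathcal{A}^i$ and $\mathcal{A}^i_+$ respectively (this is spelled out in the displayed reductions just above Corollary~\ref{cor:7.1}: the $(T_1\circ H+T_2\circ G+I_C)^\ast$ factors become $(T\circ H)^\ast$ since $I_C=I_X$ has $I_X^\ast$ a unit for $\uplus$, and again the $I^\ast_{-S}(T_2)$ factor drops out). Hence $({\rm a}_i)$ of Corollaries~\ref{cor:6.3www}/\ref{cor:6.6www} is literally $({\rm a}^1_i)$, and similarly $({\rm b}_i)$ is $({\rm b}^1_i)$, where $({\rm a}^1_i),({\rm b}^1_i)$ are the statements from Corollary~\ref{cor:7.1}. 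It then only remains to read off the four equivalences: $({\rm a}^1_1)\Leftrightarrow({\rm c}_1)\Leftrightarrow[(\alpha^1)\Leftrightarrow(\beta^1)]$, and likewise for the other three, where the first link in each chain is Corollary~\ref{cor:7.1} (equivalently Corollary~\ref{cor_7.8zzz}/\ref{cor_7.8zzz2} via Theorem~\ref{thm_9kk}, but Corollary~\ref{cor:7.1} already packages this) applied to $\Phi_1$ or $\Phi_2$, and the second link is Corollary~\ref{cor:6.3www} or~\ref{cor:6.6www} specialized as above.

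The only genuine care-point — and the step I expect to be the main obstacle — is verifying that the specialization of the perturbation mappings $\Phi_1,\Phi_2,\Phi_3$ to the trivial-constraint case is \emph{faithful}, i.e.\ that one really recovers $({\rm CVP})$ and its conjugate data without spurious extra perturbation directions. Concretely: for $\Phi_1$ one must check $\Phi_1(x,0_W,0_Z)=F(x)+(\kappa\circ H)(x)$ once $C=X$, $G\equiv0$, and that $\L_{\Phi_1}$ collapses to $\dom\kappa^\ast$ (the $T_2$-component ranges over $\L(Y,Y)=\L_+(S,K)$ and contributes nothing); for $\Phi_2$ and $\Phi_3$ one must similarly check that the extra $X$-perturbation variables survive (they do — that is the whole point of Fenchel-type perturbation), producing the $L'$ (resp.\ $L',L''$) quantifiers in $(\delta^1)$, while the $I_C^\ast$ factor in $\Phi_3$ trivializes because $I_X^\ast(L'')\ne\{+\infty_Y\}$ forces $L''=0_\L$, so $\mathcal{M}^3$ and $\mathcal{M}^3_+$ collapse to $\mathcal{A}^2$ and $\mathcal{A}^2_+$, matching $\Phi_2$. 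Once these bookkeeping identities are in hand, the proof is a one-line composition of the cited corollaries, and I would simply write: ``The assertions follow by specializing Corollaries~\ref{cor:6.3www} and~\ref{cor:6.6www} to the case $Z=\{0\}$, $C=X$, in which $(\alpha')$ reduces to $(\alpha^1)$, the terms involving $T_2\circ G$ and $I^\ast_{-S}$ vanish (Proposition~\ref{prop_1ab}(i)), and $\mathcal{M}^i,\mathcal{M}^i_+$ become $\mathcal{A}^i,\mathcal{A}^i_+$; conditions $({\rm c}_i),({\rm d}_i)$ then read as the stated equivalences, and the epigraph conditions $({\rm a}_i),({\rm b}_i)$ as $({\rm a}^1_i),({\rm b}^1_i)$ of Corollary~\ref{cor:7.1}.''
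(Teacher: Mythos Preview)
Your proposal is correct and takes essentially the same approach as the paper, which simply states that the result follows directly from Corollaries~\ref{cor:6.3www} and~\ref{cor:6.6www}. Your final one-line summary matches this exactly.

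One minor correction: in your chain $({\rm a}^1_1)\Leftrightarrow({\rm c}_1)\Leftrightarrow[(\alpha^1)\Leftrightarrow(\beta^1)]$, you attribute the first link to Corollary~\ref{cor:7.1}, but Corollary~\ref{cor:7.1} concerns the \emph{strong duality} equivalences $({\rm a}^1_i)\Leftrightarrow({\rm e}^1_i)$, not the Farkas-type ones. The link $({\rm a}_1)\Leftrightarrow({\rm c}_1)$ is already the content of Corollary~\ref{cor:6.3www} itself (and likewise $({\rm a}_2)\Leftrightarrow({\rm c}_2)$ is Corollary~\ref{cor:6.6www}); once you identify $({\rm a}_i)$ with $({\rm a}^1_i)$ under the specialization, you are done. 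Corollary~\ref{cor:7.1} is not needed anywhere in the argument. This is harmless to your overall reasoning, since your bookkeeping identifications and your closing sentence already route through Corollaries~\ref{cor:6.3www} and~\ref{cor:6.6www} alone.
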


\begin{remark}   $(\mathrm{i})$  The case $ i = 1$ in Corollary \ref{cor:7.1} and  the conclusions   $\rm(i)$-$\rm(ii)$ in Corollary \ref{cor:72www}  go back to   
 \cite[Theorems 3.6, 3.7]{CDLP20} while the results corresponding to $i = 2$ in Corollary  \ref{cor:7.1} and  the conclusions  $\rm(iii)$-$\rm(iv)$ in Corollary \ref{cor:72www}, 
 up to the best knowledge of the author, are new.   \\
\indent $(\mathrm{ii})$   When specifying the regularity conditions $(C_0)$-$(C_7)$) in Section 3 to $(\mathrm{CVP})$ we will get variants of sufficient conditions for $({\rm a}^1_i)$, $({\rm b}^1_i)$, $ i = 1, 2$, which means also 
 sufficient conditions for the stable strong duality for (CVP) in Corollaries \ref{cor:7.1}-\ref{cor:72www}.        
\end{remark}  


\subsection{Lagrange and Fenchel-Lagrange Duality for Cone-Constrained Vector Problems}


We retain  the notations in Section 5 and consider 
 the {\it cone-constrained vector problem}
\begin{align*}
({\rm VP})&\qquad \winf \{F(x):x\in C,\; G(x)\in -S\}. 
\end{align*}
Assume that   $A\cap \dom F\ne \emptyset$, where  $A:=C\cap G^{-1}(-S)$. 
It is clear that (VP) is a special case of (CCVP)  and it was studied  in \cite{DGLL17,DGLMJOTA16,DL20}. Note that in this case, one has   $\L(W,Y)=\L(P,K)=\{\theta\}$ where $\theta\colon \{0\}\to W$,    $\theta(0)=0_W$, and 
hence, $\dom \kappa^\ast=\{\theta\} $. 

The {\it Lagrange, Fenchel-Lagrange dual problems} of $({\rm CCVD}_\ell^1) $, $({\rm CCVD}_\ell^2) $, and $({\rm CCVD}_\ell^3) $  (in Section 5) now turn back to exactly the same as  the Lagrange and Fenchel-Lagrange dual problems  in \cite{DL20}. Corollaries \ref{cor_7.8zzz}, \ref{cor_7.8zzz2}, and \ref {cor_7.8zzz2b} lead to characterizations of stable strong duality for (VP) (similar to Corollary \ref{cor:7.1}) which cover \cite[Theorem 6.1]{DL20} while the similar results as Corollary \ref{cor:72www} cover \cite[Theorem 5.1]{DL20}. 
On the other hand, the problems  $({\rm CCVD}^i)$, $i = 1, 2, 3$,  collapses to the following ones: 
\begin{eqnarray*}
({\rm VD}^1) && \mathop{\wsup}_{T \in\L_+^w(S,K)}   \mathop{\winf}\limits_{(x,s)\in C\times S} [F(x)+T\circ G(x)+T(s)],  \\
({\rm VD}^2) && \mathop{\wsup}_{\substack{L'\in \L(X,Y)\\ T \in\L_+^w(S,K)}}   [-F^\ast(L')\uplus(T\circ G+I_C)^*(-L')\uplus I^*_{-S}(T)],  \\
({\rm VD}^3) && \mathop{\wsup}_{\substack{L',L''\in \L(X,Y)\\ T \in\L_+^w(S,K)}}   [-F^\ast(L')\uplus(T\circ G)(-L'-L'')\uplus I_C^*(-L'')\uplus I^*_{-S}(T)],    \\
\end{eqnarray*}
which are totally new dual problems for (VP),  and so, all the stable duality results (specified from theorems/corollaries in Section 5  to (VP)) concerning these problems are new. 
As far as the length of the paper is concerned,  we will not state these results here. The same applies to the class of cone-constrained composite (scalar)  problems in the next subsection.


 \subsection{Scalar Cone-Constrained Composite Problems} 
 
Consider the {\it cone-constrained composite (scalar) problem} of the form 
\begin{align*}
({\rm CCP})&\qquad \inf \{f(x)+(\kappa \circ H)(x) :x\in C,\; G(x)\in-S\}.
\end{align*}
Here we retain the notions as in Section 5 with  $Y=\mathbb{R}$ and $K=\mathbb{R}_+$.  Obviously, this is a special case of   (CCVP).   This model was considered in \cite{BGW07}  with the case where   $f=0$    or  $G\equiv 0$, $C = X$. 
Noting that, in this case, $\L_+(P,K)=P^+$ and $\L_+^w(S,K)=\L_+(S,K)=S^+$, and so,  $({\rm CCVD}^1)$ and $({\rm CCVD}^1_\ell)$ respectively  become
 the {\it Lagrange dual problems:} 
$$({\rm CCD}^1) \qquad \mathop{\sup}_{(\lambda_1,\lambda_2)\in\dom \kappa^\ast \times S^+}  
 \left[\inf_{x\in C}[f(x)+(\lambda_1 H)(x)+(\lambda_2 G)(x)]  -  \kappa^\ast(\lambda_1)\right],  $$
$$({\rm CCD}^1_\ell) \qquad \mathop{\sup}_{\substack{\lambda_1 \in\dom \kappa^\ast\cap P^+\\ \lambda_2 \in S^+}}
 \left[\inf_{x\in C}[f(x)+ (\lambda_1 H)(x)+(\lambda_2 G)(x)]  -  \kappa^\ast(\lambda_1)\right],  $$
and  $({\rm CCD}^i)$,   $({\rm CCD}^i_\ell)$, $i=2,3$, become  some forms of Fenchel-Lagrange dual problems: 
\begin{align*}
({\rm CCD}^2) \quad&\mathop{\sup}_{\substack{(x^\ast\!, \lambda_1,\lambda_2) \in X^\ast\!\times \dom \kappa^\ast \times S^+}}  
  \left[-f^\ast(x^\ast) -(\lambda_1 H+\lambda_2 G+i_C)^*(-x^\ast) - \kappa^\ast(\lambda_1)\right],\\
({\rm CCD}^2_\ell ) \quad&\hskip0.8em\mathop{\sup}_{\substack{(x^\ast\!,\lambda_2)\in X^\ast\!\times S^+\\\lambda_1 \in\dom \kappa^\ast\cap P^+}}  \hskip0.8em
  \left[-f^\ast(x^\ast) -(\lambda_1 H+\lambda_2 G+i_C)^*(-x^\ast) - \kappa^\ast(\lambda_1)\right],\\
({\rm CCD}^3) \quad&\mathop{\sup}_{\substack{x^\ast\!, y^\ast\in X^\ast\\
 (\lambda_1,\lambda_2) \in  \dom \kappa^\ast \times S^+}}  
  \left[-f^\ast(x^\ast) -(\lambda_1 H+\lambda_2 G)^\ast(-x^\ast-y^\ast)-i_C^*(y^\ast) - \kappa^\ast(\lambda_1)\right],\\
({\rm CCD}^3_\ell) \quad&\mathop{\sup}_{\substack{(x^\ast\!, y^\ast\!,\lambda_2)\in (X^\ast)^2\times S^+\\
 \lambda_1 \in  \dom \kappa^\ast \cap P^+}}  
  \left[-f^\ast(x^\ast) -(\lambda_1 H+ \lambda_2 G)^\ast(-x^\ast-y^\ast)-i_C^*(y^\ast) - \kappa^\ast(\lambda_1)\right].
\end{align*}
Corollaries \ref{cor_7.8zzz}-\ref{corol73} lead to various results on strong duality for (CCP).  Some of the important special cases of the problem (CCP) 
can be listed as: the {\it composite (scalar) problem} (CP)  (see \cite{Bot2010,BGW07})   and the cone-constrained problem (P1): 
\begin{align*}
({\rm CP})&\qquad \inf_{x\in X} [f(x)+(\kappa \circ H)(x)], \\
({\rm P1})&\qquad \inf \{f(x)) :x\in C,\; G(x)\in-S\}.  
\end{align*}
\indent For (CP),  the Lagrange dual problems $({\rm CCD}^1)$ and $({\rm CCD}^1_\ell)$ become the usual Lagrange dual problems  of (CP)  while  
 $({\rm CCD}^2)$   and $({\rm CCD}^2_\ell)$ reduce, respectively, to Fenchel-Lagrange dual problems appeared in \cite[page 42]{Bot2010} and in \cite{BGW07}. 
 
 Similarly, for the (P1), $({\rm CCD}^1)$ and $({\rm CCD}^1_\ell)$ become the usual Lagrange dual problems while  
 the dual problems  $({\rm CCD}^2)$ (and $({\rm CCD}^2_\ell)$) and $({\rm CCD}^3)$ (and $({\rm CCD}^3_\ell)$) become, 
\begin{align*}
({\rm D}^2) \quad&\mathop{\sup}_{\substack{(x^\ast\!, \lambda) \in X^\ast \times S^+}}  
  \left[-f^\ast(x^\ast) -(\lambda G+i_C)^*(-x^\ast) \right],\\
({\rm D}^3) \quad&\mathop{\sup}_{\substack{x^\ast\!, y^\ast\in X^\ast\\
 \lambda \in   S^+}}  
  \left[-f^\ast(x^\ast) -(\lambda G)^\ast(-x^\ast-y^\ast)-i_C^*(y^\ast) \right], 
\end{align*}
respectively,  which are the forms of Fenchel-Lagrange duality introduced in  \cite{Bot2010,DNV-08,BGWMIA09,DVN-08}    
and this  justifies the name ``Fenchel-Lagrange dual problem" for the problems $ ({\rm CCVD}^i)$, $({\rm CCVD}_\ell^i) $, $ i = 2, 3$  in Section 5.
Corollaries \ref{cor_7.8zzz}, \ref{cor_7.8zzz2}, and \ref{cor_7.8zzz2b}, in the current setting, implies \cite[Corollary 6.2]{DL20}, and hence, they cover  the results established in \cite{Bot2010,DNV-08,BGWMIA09,DVN-08}  (see also \cite[Remark 6.2]{DL20}). 

 It is also  worth observing that   when $Y=\mathbb{R}$, $(C_3)$ and $(C_4)$ reduce to $(RC_1^\Phi)$ and $(RC_3^\Phi)$ in \cite{Bot2010} respectively,  while $(C_5)$ is weaker than $(RC_2^\Phi)$ and consequently, Theorem \ref{thm_14kk}, specified to the case $Y=\mathbb{R}$,  extends   \cite[Theorem 1.7]{Bot2010} in  the sense that  we get a stable   strong duality results (instead of strong duality results) and under  weaker assumptions. 
 \medskip 

\noindent
{\small {\bf Acknowledgements}\quad This research is funded  by Vietnam National University HoChiMinh city (VNU-HCM) under grant number B2021-28-03. }


{\small 
\appendix
 
 \section*{Appendix} 
   
   
\section{\small Proof of Theorem \ref{lem_epiclosed}    (Extended open mapping theorem)}

 Consider the set-valued mapping $\R\colon X\times Y \rightrightarrows Z_0$ defined by
$$\R(x,y):= \{z\in Z: \Phi (x,z)\leqq_K y\}\subset Z_0.$$
Then, \eqref{eq_2.8d} simply means that  $0_Z\in \inte_{Z_0} \mathcal{G}(U_0\times V_0)$. The proof of  \eqref{eq_2.8d} is  based on   Lemma  \ref{lem_Zalinescu} and    proceeds   with  four  steps as follows:

  {\rm ($\alpha$)} Let  $U$ and $V$  be the convex neighborhoods of $x_0$ and $\Phi(x_0, 0_Z)$, respectively. 
  It is easy to check that $\R(U\times V)$ is a convex set (using the convexity of $\Phi$ and of the sets $U,V$). 
We will show that  $\R(U\times V)$  is  absorbing  in   $Z_0$.  
 Take $z\in Z_0$, we will show that there exists $\lambda>0$ such that $\lambda z\in \R(U\times V)$. Firstly, one has  $0_Z\in \pi(\dom \Phi)$ (as $(x_0,0_Z)\in \dom \Phi$), so $\aff (\pi(\dom \Phi))=\lin (\pi(\dom \Phi))=Z_0$. 
Now, as $0_Z\in \icr(\pi(\dom \Phi))$, there exists $\delta>0$  such that $\delta z \in \pi(\dom \Phi)$. 
Then, there is $x\in X$ such that $(x,\delta z)\in\dom \Phi$, or equivalently, $\Phi(x,\delta z)\in Y$. 
On the one hand, as $U,V$ are respectively the neighborhoods of $x_0$ and $\Phi(x_0,0_Z)$,  there exists $\mu\in]0;1[$ such that $x_0+\mu (x-x_0)\in U$ and $\Phi (x_0,0_Z)+\mu (\Phi (x,\delta z)-\Phi (x_0,0_Z))\in V$. On the other hand, one also has
\begin{align*}
\Phi (x_0+\mu(x-x_0), \mu \delta z) &=\Phi ((1-\mu)(x_0,0_Z)+\mu(x,\delta z))\\
			&\in  (1-\mu) \Phi (x_0,0_Z) + \mu \Phi (x,\delta z)-K \textrm{ (as $\Phi$ is $K$-convex)}\\
			&=\Phi (x_0,0_Z)+\mu (\Phi (x,\delta z)-\Phi (x_0,0_Z))-K. 
\end{align*}
Thus, $ \lambda z    \in \R(U\times V)$, where $\lambda =  \mu\delta >0$,  meaning that $\R(U\times V)$ is absorbing in $Z_0$.

  {\rm ($\beta$)}  Now, take an arbitrary  neighborhood $U\times V$ of $(x_0, \Phi(x_0, 0_Z))$ and we  will prove that   $0_Z\in \inte_{Z_0}\cl( \R(U\times V))$.      As $X\times Y$ is locally convex, replace $U,V$ by their subsets if necessary, we can suppose that $U$ and $V$ are convex.
From  ($\alpha$), $\R(U\times V)$ is convex and absorbing in $Z_0$. Consequently, $\cl \R(U\times V)$ is a convex, closed and absorbing subset of $Z_0$, and hence, a neighborhood of $0_Z$  (as $Z_0$ is a barreled space). So,  $0_Z\in \inte_{Z_0}\cl( \R(U\times V))$.

{\rm ($\gamma$)} \  We now show  that \eqref{eq_2.8d} follows from Lemma \ref{lem_Zalinescu}.     From  ($\beta$),  $0_Z$ belongs to the intersection of all sets of the forms  $\inte_{Z_0}(\cl( \R(U\times V))$ where   $U\times V$ running from the collection of all neighborhoods of  $(x_0,\Phi (x_0,0_Z))$. Moreover, by assumption,  $X\times Y$ is a complete and first countable space, and $\R$ is a closed convex multifunction (as $\Phi$ is $K$-convex and $K$-epi closed). Then   Lemma \ref{lem_Zalinescu}, applying to the multifunction $\mathcal{G}$ with $X\times Y$, $Z_0$,  and  $(x_0,\Phi (x_0,0_Z)) $  playing the roles of $\tilde X$, $\tilde Y$ and $\tilde x_0$, respectively,  gives  
 $$0_Z \ \in \  \bigcap\limits_{U\times V \in \mathcal{N}(x_0,\Phi (x_0,0_Z))}   \   \inte_{Z_0}(\R(U\times V)), $$ 
 showing that $0_Z \in \inte_{Z_0} \mathcal{G}(U_0\times V_0)$  and   \eqref{eq_2.8d} follows.  \qed


\section{\small Proof of Theorem  \ref{cor_nonasymptotic_representing_epi_3}} \label{A-B} 

We will show that if one of the conditions $(C_2)$, $(C_3)$, $(C_4)$, or $(C_5)$ holds then the condition $(C_1)$ in Theorem \ref{thm_nonasymptotic_representing_epi_2} holds, and hence, the conclusion now follows from Theorem \ref{thm_nonasymptotic_representing_epi_2}.

$(\alpha)$ Assume that $(C_2)$ holds. Then, for any $L\in \L(X,Y)$, take $y_L:=\widehat y-L(\widehat x)$ and $V_L:=\widehat V$, one gets
$\Phi(\widehat x,z)-L(\widehat x)\leqq_K y_L$ for all $z\in V_L\cap Z_0$. 	
So, $(C_1)$ holds (with $\hat x$ playing the role of   $x$).

$(\beta)$ Assume that $(C_3)$ holds.  Pick $\bar k\in \inte K$. Then, $\Phi(\widehat x,0_Z)+\bar k -\inte K$ is a neighborhood of $\Phi(\widehat x, 0_Z)$. So, by the continuity of $\Phi(\widehat x,.)_{| Z_0}$  at $0_Z$, there is a    neighborhood  $\widehat V$ of $0_Z$ such that $\Phi(\widehat x, \widehat V\cap Z_0) \subset \Phi(\widehat x,0_Z)+\bar k-\inte K$, or equivalently, 
$$ \Phi (\widehat x, z)  \leqq_K      \Phi(\widehat x,0_Z)+\bar k:=\widehat y,\quad \forall z\in \widehat V\cap Z_0,$$
showing that $(C_2)$ holds, and consequently,  $(C_1)$ holds as well.

$(\gamma)$ Assume that $(C_4)$ holds. 
It is worth noting that $\aff (\pi (\dom \Phi))=\lin (\pi (\dom \Phi))=Z_0$ (as $0_Z\in \pi(\dom\Phi)$). So, $\ri (\pi (\dom \Phi))=\inte_{Z_0} (\pi (\dom \Phi))$.

Without loss the generality, assume that $Z_0=\mathbb{R}^n$ and take $\{e_i\}_{i=1}^n$ the standard basic of $\mathbb{R}^n$ (i.e., the $i^{th}$ coordinate of $e_i$ is $1$ and the others are $0$). 
As $0_Z\in \inte_{Z_0}  (\pi (\dom \Phi))$, there exists $\{\varepsilon_i\}_{i=1}^n\subset \RR_+\setminus \{0\}$ such that $\{\pm \varepsilon_i e_i\}_{i=1}^n\subset  \pi (\dom \Phi) $. 
For each $i= 1,2,\ldots, n$, as $\pm \varepsilon_i e_i \in \pi (\dom \Phi)$, there exists $x_i,x'_i\in X$ such that $(x_i,\varepsilon_ie_i)$ and $(x'_i, -\varepsilon_i e_i)$ belong to $\dom \Phi$.
Next, for any $L\in\L(X,Y)$, take $y_L\in Y$ such that (the existence of $y_L$ is guaranteed by Lemma \ref{pro_1a}(ii))
\begin{equation}
\label{eq_339d}
\Phi(x_i, \varepsilon_i e_i)-L(x_i)<_K y_L   \ \ {\rm and} \ \, \,\Phi (x'_i, -\varepsilon_ie_i)-L(x_i')<_K y_L,
 \forall i\in\{1,2,\ldots, n\}.
\end{equation}
It is easy to see that $\co\big(\{\pm \varepsilon_i e_i\}_{i=1}^n\big)$ is a neighborhood of $0_Z$ in $Z_0$, and hence, there exists the neighborhood $\widehat V$ of $0_Z$ such that $\widehat V\cap Z_0=\co\big(\{\pm \varepsilon_i e_i\}_{i=1}^n\big)$.
Now, for each $z\in \widehat V\cap Z_0$,  there exists $\{(\lambda_i,\lambda'_i)\}_{i=1}^n\subset \RR^2_+$ such that $\sum_{i=1}^{n}(\lambda_i+\lambda'_i)=1$ and $z=\sum_{i=1}^{n}(\lambda_i \varepsilon_i - \lambda'_i \varepsilon')e_i$. 
Take $\tilde x= \sum_{i=1}^{n} (\lambda_i x_i + \lambda'_ix'_i) $, by the convexity of $\Phi$, 
\begin{equation*}
\Phi (\tilde x, z) \leqq_K \sum_{i=1}^{n} \big(\lambda_i \Phi ( x_i, \varepsilon_ie_i)+ \lambda'_i \Phi ( x'_i, -\varepsilon_ie_i)\big).
\end{equation*}
This, together with \eqref{eq_339d}, entails $\Phi(\tilde x, z)-L(\tilde x )\leqq_K y_L$ which means that 
 $(C_1)$ holds.

$(\delta)$ Assume that $(C_5)$ holds.
Take $L\in \L(X,Y)$. As $0_Z\in \pi(\dom \Phi)$, there exists $x_0\in X$ such that $(x_0,0_Z)\in \dom \Phi$. It is worth noting that as $\Phi $ is $K$-convex and $K$-epi closed, $\Phi - L$ is $K$-convex and $K$-epi closed as well.
Pick $k_0\in \inte K$. Apply the extended open mapping theorem,  Theorem \ref{lem_epiclosed},  with $\Phi$ being replaced by $\Phi - L$, $U_0=X$ and $V_0=\Phi(x_0,0_Z)-L(x_0)+k_0-\inte K$, one gets that $0_Z \in \inte_{Z_0} V_1$, where $V_1$  is  the set defined by 
$$V_1:= \left\{z\in Z: \begin{array}{l}
\Phi (x,z)-L(x) \leqq_K y \textrm{ for some } x\in X \\
\textrm{and } y\in \Phi(x_0,0_Z)-L(x_0)+k_0-\inte K
\end{array}\right\}.$$
Take $y_L=\Phi(x_0,0_Z)-L(x_0)+k_0$ and $V_L$ the neighborhood of $0_Z$ such that $V_L\cap Z_0=V_1$. Then, for all $z\in V_L\cap Z_0$,  by the definition of the set $V_1$, there exists $x\in X$ and $y\in  \Phi(x_0,0_Z)-L(x_0)+k_0-\inte K $ such that $ \Phi(x,z)-L(x)\in y-K . $
 It then  follows that 
$$\Phi(x,z)-L(x)\in\Phi(x_0,0_Z)-L(x_0)+k_0-\inte K-K = y_L-\inte K\subset y_L-K,$$
yielding   $\Phi(x,z)-L(x)\leqq_K y_L$.  Consequently,  $(C_1)$ holds. 

$(\epsilon)$ Assume that $(C_6)$ holds.
 By Proposition \ref{pro_incluepi},  to prove \eqref{eq_18d}, it is sufficient to show that 
$\epi\Phi(.,0_Z)^\ast    \subset  \mathcal{M},$ 
of which the  proof goes along the line as  that  of   Theorem \ref{thm_nonasymptotic_representing_epi_2}.

Firstly, take $(\bar L,\bar y)\in \epi \Phi(.,0_Z)^*$. Then  \eqref{eq_31bbb}  holds  by \eqref{epiF*}.  Let us set 
\begin{equation*}
	\Delta' _{\bar L}:=\bigcup_{(x,z)\in \dom \Phi}
		\Bigg( \Big(\bar L(x)-\Phi(x,z)-K\Big)\times \Big(z-\pi(\dom \Phi)\Big) \Bigg).
\end{equation*}
\indent $\bullet$ As $\Phi$ is $K$-convex,  it is easy to check that $\Delta'_{\bar L} \subset Y\times Z$ is  convex.    Moreover, as the sets  $\dom\Phi$, $\inte K$, and $\inte \pi (\dom \phi) $ are nonempty, $\inte \Delta'_{\bar L} $ is nonempty, as well. 

$\bullet$ We show that $(\bar y,0_{Z})\notin \inte\Delta' _{\bar L}$. 
Indeed, if $(\bar y,0_{Z})\in \inte\Delta'_{\bar L}$ then, by the same argument as in the proof of Theorem \ref{thm_nonasymptotic_representing_epi_2}, there exists $\bar k\in\inte K$ such that 
 $(\bar y+\bar{k},0_{Z})\in \Delta' _{\bar L}$. 
Hence, there is  $(\bar x,\bar z) \in \dom \Phi$ such that $\bar y+\bar{k}\in {\bar L} (\bar{x})-\Phi(\bar x, \bar z)-K$ and $0_Z\in \bar z-\pi(\dom \Phi)$.  Taking  $(C_6)$ into account,  one gets    $\Phi (\bar x,0_Z) \leqq_K  \Phi(\bar x,\bar z)$. 
Consequently,  $\bar y+\bar k \in {\bar L}(\bar x)-\Phi(\bar x,0_Z)-K,$ yielding  $\bar y \in {\bar L} (\bar x)-\Phi(\bar x,0_Z)-\inte K$, which           contradicts \eqref{eq_31bbb}.

$\bullet$ By  the   separation theorem (\cite[Theorem 3.4]{Rudin91}) applying  to the point $(v,0_{Z})$ and   the convex set $\inte\Delta' _{\bar L}$ in $Y \times Z$,   there is  $(y^*_0, z^*_0)\in Y^{*}\times Z^* \setminus \{ (0_{Y^*}, 0_{Z^*})\}$  such that
\begin{equation}  \label{abcd20}
	y^*_0(v) < y^*_0(y) + z^*_0(z),\quad \forall (y,z)\in \inte\Delta'_{\bar L}.
\end{equation}
\indent $\bullet$ We now  show  that
\begin{equation}  \label{eqaaaa}
y^*_0(k')<0, \quad \forall k' \in \inte K.
\end{equation}
Take $k'\in\inte K$. 
On the one hand, it follows from the last part of $(C_6)$ that there exists $(\widehat x,\widehat z)\in \dom \Phi$ satisfying $0_Z\in \widehat z- \inte \pi(\dom \Phi)$. On the other hand, according to Lemma \ref{pro_1a}(i), there exists $\mu>0$ such that and hence $v-\mu k'\in \bar  L(\widehat x)-\Phi(\widehat x, \widehat z)-\inte K$.
So, the set
\begin{equation*}
W:=\Big(\bar L(\widehat x) - \Phi(\widehat x,\widehat z) -\inte K\Big)\times \Big(\widehat z -\inte \pi(\dom \Phi)\Big)
\end{equation*}
is a neighborhood of $(v-\mu k',0_Z)$. It is easy to see that $W\subset \Delta'_{\bar L} $,  yielding $(v-\mu k',0_Z)\in\inte \Delta'_{\bar L}$. Applying \eqref{abcd20},  one has  $y^*_0(v)<y^*_0(v-\mu k')+z^*_0(0_Z)$,  which yields    \eqref{eqaaaa}. 

$\bullet$ Now, take $k_0 \in \inte K$ such that $y^\ast_0(k_0)=-1$ and define  $T\in \L(Z,Y)$  by $ T(z)=-z^{\ast }_0(z) k_{0}$ for any $z \in Z$. 
Using the same argument as in the proof of Theorem \ref{thm_nonasymptotic_representing_epi_2} (see \eqref{abcd22}), we can  show that 
$$\bar y\notin y+T(z)    - \inte K,  \,\quad\forall (y,z)\in \Delta'_{\bar L}$$
which, together the fact that $(\bar L(x)-\Phi(x,z),z)\in \Delta'_{\bar L}$ for all $(x,z)\in \dom\Phi$, yields
$$\bar y\notin \bar L(x)-\Phi(x,z) +T (z)-\inte K,\quad \forall (x,z)\in \dom \Phi.$$
Again,  \eqref{epiF*}    (applying to the mapping $\Phi$) yields   $(\bar L,T,\bar y)\in \epi \Phi^*$, or equivalently, $(\bar L,\bar y)\in \epi \Phi^*(.,T)$.    The    inclusion 
$\epi\Phi(.,0_Z)^\ast   \subset \mathcal{M}    $  has been proved, and so \eqref{eq_18d} holds.

$\bullet$ The proof of \eqref{eq_18ddlbis}  (under extra assumption $(C_0)$) goes on the same way as  last part  the proof of     Theorem \ref{thm_nonasymptotic_representing_epi_2}.  \qed


\section{\small Proof of Theorem \ref{thm_nonasymptotic_representing_epi}} \label{A-C}   

Take $k_0 \in \inte K$,   $(\bar L,\bar y)\in \epi \Phi(.,0_Z)^*$ and  consider the set    
\begin{equation*}
	\Delta'' _{\bar L}:=\bigcup_{(x,z)\in \dom \Phi}
		\Big[ \Big(L(x)-\Phi(x,z)-K\Big)\times \Big(z-S\Big) \Big]. 
\end{equation*}

By a similar argument asas in Appendix \ref{A-B}$(\epsilon)$, using  $\Delta'' _{\bar L} $   instead of 
 $\Delta^\prime_{\bar L}$, we  can establish   $T\in\L_\Phi$   such that $(\bar L, \bar y) \in \epi \Phi^\ast (\cdot, T)$, and we get 
 \eqref{eq_18d}.
 
 For the proof of    \eqref{eq_18ddlbis},  take $(\tilde x,0_Z)\in \dom \Phi$. For any  $s\in S$    and $\nu>0$, 
one has $-\nu s\in -S=0_Z-S$, and hence, $(\bar L(\tilde x)-\Phi(\tilde x), -\nu z)\in \Delta''_{\bar L}$.  The same argument as the last part of  the proof of Theorem \ref{thm_nonasymptotic_representing_epi_2} will  leads to $T \in \L_+(S, K)$.  
 \qed 
 
\section{\small Proof of Lemma  \ref{lem:6.1_nwewewE}} \label{A-D}   

$(i)$ Take $L\in \L(X,Y)$ and $T:=(T_1,T_2)\in \L(W,Y)\times \L(Z,Y)$, one has
\begin{align*}
\Phi_1^*(L,T)
&=\wsup\left\{
\begin{array}{r}
L(x)+T_1(w)+T_2(z)
-F(x)+\kappa ( H(x)+w) 
:\;
 x\in C,\\ H(x)+w\in \dom \kappa,\; G(x)+z\in -S
\end{array}
\right\}\\
&=\wsup\left\{
\begin{array}{r}
L(x)+T_1(u-H(x))+T_2(s-G(x))
-F(x)+\kappa ( u) :\\
 x\in C,\; u\in \dom \kappa, \; s\in -S
\end{array}
\right\}\\
&=\wsup\left\{
\begin{array}{r}
L(x)-F(x)-(T_1\circ H)(x)-(T_2\circ G)(x)
+T_1(u)-\kappa(u) +T_2(s):\\
\quad  x\in C,\; u\in \dom \kappa,\; s\in -S.
\end{array}\right\}\\
&=\wsup\big[(L-F-T_1\circ H-T_2\circ G)(C)+(T_1-\kappa)(\dom \kappa)+ T_2(-S)\big]\\
&=
\wsup\big [ \wsup[(L-F-T_1\circ H-T_2\circ G)(C)]+\wsup[(T_1-\kappa)(\dom \kappa)]+\\
& \hskip4cm+\wsup[T_2(-S)]\big]\qquad \textrm{(by Proposition \ref{pro_decomp}(iii))}\\
&=\wsup\big[(F+T_1\circ H+T_2\circ G+I_{C})^*(L)+\kappa^\ast(T_1)+I^*_{-S}(T_2)\big]\\
&=(F+T_1\circ H+T_2\circ G+I_{C})^*(L)\uplus\kappa^\ast(T_1)\uplus I^*_{-S}(T_2). 
\end{align*}

Assume further that $T_2\in \L_+(S,K)$. Then, $I^\ast_{-S}(T_2)=\wsup [T_2(-S)]=\wsup \{0_Y\}=-\bd K$  
 (applying Proposition \ref{pro_decomp} (vii)  to  $N=T_2(-S)$ and $M=\{0_Y\}$), and hence, 
 
\vskip-0.4cm 
\begin{align*}
\Phi_1^*(L,T)&=(F+T_1\circ H_1+T_2\circ H_2+I_{C})^*(L)\uplus\kappa^\ast(T_1)\uplus (-\bd K)\\
&=(F+T_1\circ H_1+T_2\circ H_2+I_{C})^*(L)\uplus\kappa^\ast(T_1),   \ \ \ \ \textrm{(by Proposition \ref{prop_1ab}(i))}.
\end{align*}
$(ii)$ We just prove the first equality, the second one can be obtained by the same argument. 
Take  $T:=(T_1,T_2)\in \L(W,Y)\times \L(Z,Y)$, one has 

 
 \begin{align*}
&(L,y)\in \epi \Phi^\ast(.,T)\\
&\Longleftrightarrow y\in (F+T_1\circ H+T_2\circ G+I_{C})^*(L)\uplus\kappa^\ast(T_1)\uplus I^*_{-S}(T_2)+K\quad \textrm{(by \eqref{eq:45nwewE})}\\
&\Longleftrightarrow \exists U\in \P_p(Y): 
y\in U\uplus\kappa^\ast(T_1)\uplus I^*_{-S}(T_2),\; (F+T_1\circ H+T_2\circ G+I_{C})^*(L) \preccurlyeq_K U \
&\hskip9cm\textrm{(by Proposition \ref{prop_1ab}(v))}\\
&\Longleftrightarrow   \exists U\in \P_p(Y): 
y\in U\uplus\kappa^\ast(T_1)\uplus I^*_{-S}(T_2), \textrm{ and }
 (L,U\uplus\kappa^\ast(T_1)\uplus I^*_{-S}(T_2))=\\
& \hskip0.7cm (L,U)\boxplus (0_\L,\kappa^\ast(T_1)\uplus I^*_{-S}(T_2))\in \exepi  (F+T_1\circ H+T_2\circ G+I_{C})^* \boxplus (0_\L,\kappa^\ast(T_1)\uplus I^*_{-S}(T_2)) 
\\
&\Longleftrightarrow (L,y)\in \Psi(\exepi (F+T_1\!\circ\! H_1+T_2\!\circ\! H_2+I_{C})^\ast\boxplus (0_\L, \kappa^\ast(T_1)\uplus I_{-S}^\ast(T_2)).
\end{align*}
which means, 
\begin{equation}
\label{eq:46_nwewewE}
\epi \Phi^\ast(.,T)
=\Psi(\exepi (F+T_1\!\circ\! H_1+T_2\!\circ\! H_2+I_{C})^\ast\boxplus (0_\L, \kappa^\ast(T_1)\uplus I_{-S}^\ast(T_2)).
\end{equation}
Next, if $T\notin \dom \kappa^*\times \L_+^w(S,K)$ then $\kappa^\ast(T_1)=\{+\infty_Y\}$ or $I^\ast_{-S}(T_2)=\{+\infty_Y\}$ (see \eqref{domI*}) . This, together with \eqref{eq:45nwewE}, yields $\Phi_1^*(L,T)=\{+\infty_Y\}$ for all $L\in \L(X,Y)$. So, 
$\L_{\Phi_1}\subset \dom \kappa^*\times \L_+^w(S,K),$
and the desired equality in $(\mathrm {ii})$ follows from \eqref{eq:46_nwewewE}  and the fact that  
$\epi \Phi^*_1(.,T)=\emptyset$ whenever $T\notin \L_{\Phi_1}$. 
\qed


}

\end{document}